\theoremstyle{definition} % no italics
\newtheorem{thm}{Theorem}[section] % reset theorem numbering for each chapter
\newtheorem{defn}[thm]{Definition} % definition numbers depend on theorem numbers
\newtheorem{exmp}[thm]{Example}
\newtheorem{prop}[thm]{Proposition}
\newtheorem{lem}[thm]{Lemma}
\newtheorem{fig}[thm]{Figure}
\newtheorem{cor}[thm]{Corollary}
\newcommand{\Z}{\mathbb{Z}}
\newcommand{\Q}{\mathbb{Q}}
\newcommand{\R}{\mathbb{R}}
\newcommand{\F}{\mathbb{F}_2}
\newcommand{\ulF}{\underline{\F}}
\newcommand{\ulZ}{\underline{\Z}}
\newcommand{\ulf}{{\underline{\widehat{f}}}}
\newcommand{\ulg}{\underline{g}}
\newcommand{\ul}[1]{\underline{#1}}
\newcommand{\ulM}{\underline{\mathbb{M}}}
\newcommand{\Si}[1]{\Sigma^{#1}}
\newcommand{\abs}[1]{ \left\lvert#1\right\rvert} % absolute value: single vertical bars
\newcommand{\im}{\text{im }}
\renewcommand\qedsymbol{$\blacksquare$}
\renewenvironment{proof}{{\bfseries Proof.}}{\vspace{-12pt}\begin{flushright}
\qedsymbol
\end{flushright}}
\tikzset{
downtri/.style={
draw,
isosceles triangle,
isosceles triangle apex angle=60,
shape border rotate=-90
},
}
\newcommand{\K}{K}
\newcommand{\resC}{black}
\newcommand{\trC}{black}
\newcommand{\artr}[5]{ % __ to __, xshift, yshift, name
\draw[->,\trC,bend right=2ex] (#1) to node[right,xshift={#3pt},yshift={#4pt}] {\scalebox{0.7}{#5}} (#2);
}
\newcommand{\arres}[5]{ % __ to __, xshift, yshift, name
\draw[->,\resC,bend right=2ex] (#1) to node[above,xshift={#3pt},yshift={#4pt}] {\scalebox{0.7}{#5}} (#2);
}
\newcommand{\res}[2]{ % 1=from, 2=to
\textbf{r}^{#1}_{#2}
}
\newcommand{\tr}[2]{ % 1=to, 2=from
\textbf{t}^{#1}_{#2}
}
\newcommand{\MackC}[6]
{\begin{tikzcd}[ampersand replacement=\&, bend right=2.5ex, column sep=scriptsize]
#1 \arrow[dd, color=#3, "#5"'] \\ \\
#2 \arrow[uu, color=#4, "#6"']
\end{tikzcd}}
\newcommand{\MackKall}[5]
{\begin{tikzcd}[ampersand replacement=\&, bend right=2.5ex, column sep=scriptsize]
	\& #1 \ar[dl, color=#2] \ar[d, color=#2] \ar[dr,color=#2] \& \\
	#1 \ar[dr, color=#3] \ar[ur, color=#4] \& #1 \ar[d, color=#3] \ar[u, color=#4] \& #1 \ar[ul, color=#4]  \ar[dl, bend right, color=#3] \\
	\& #1 \ar[ul, color=#5] \ar[u, color=#5] \ar[ur, color=#5] \&
\end{tikzcd}}
\newcommand{\MackKupperR}[6]
{\begin{tikzcd}[ampersand replacement=\&, column sep=scriptsize]
	\& #1 \ar[dl, color=#3, "#4"'] \ar[d, color=#3, "#5"] \ar[dr,color=#3, "#6"] \& \\
	#2 \& #2 \& #2 \\
	\& 0 \&
\end{tikzcd}}
\newcommand{\MackKgn}[1]
{\begin{tikzcd}[ampersand replacement=\&, bend right=2.5ex, column sep=scriptsize]
	\& #1 \& \\
	0 \& 0 \& 0 \\
	\& 0 \&
\end{tikzcd}}
\def\bdiamond{
\begin{tikzpicture}[x=1.2ex,y=1.2ex]
	\fill (-0.65,0) -- (0,0.65) -- (0.65,0) -- (0,-0.65) -- cycle;
\end{tikzpicture}
}
\def\gcirc{
\begin{tikzpicture}[x=1.2ex,y=1.2ex]
    \draw (0,-0.65) circle (5pt);
    \node at (0,-0.65) {\(n\)};
\end{tikzpicture}
}
\def\trap{
\begin{tikzpicture}
  \node[trapezium, fill=none, draw, inner sep=2.5pt,scale=1] at (0,0) {};
\end{tikzpicture}
}
\def\pent{
\begin{tikzpicture}
  \node[regular polygon, fill=none, draw, regular polygon sides=5, 
minimum width=0pt, inner sep = 0.5ex,] at (0,0) {};
\end{tikzpicture}
}
\def\btrap{
\begin{tikzpicture}
  \node[trapezium, fill=black, draw, inner sep=2.5pt,scale=1] at (0,0) {};
\end{tikzpicture}
}
\def\bpent{
\begin{tikzpicture}
  \node[regular polygon, fill=black, draw, regular polygon sides=5, 
minimum width=0pt, inner sep = 0.5ex,] at (0,0) {};
\end{tikzpicture}
}
\crefname{equation}{}{}
\newcounter{themyfigure}
\renewcommand{\arraystretch}{1.5}
\begin{document}
\title{Klein four 2-slices and the slices of \(\Si{\pm n}H\ulZ\)}
\author{Carissa Slone}
\email{c.slone@uky.edu}
\address{Department of Mathematics, The University of Kentucky, Lexington, KY 40506-0027, USA}

\date{\today}

\begin{abstract} 
We determine a characterization of all 2-slices of equivariant spectra over the Klein four-group \(C_2\times C_2\). We then describe all slices of integral suspensions of the equivariant Eilenberg-MacLane spectrum \(H\ulZ\) for the constant Mackey functor over \(C_2\times C_2\).
\end{abstract}

\maketitle
\setcounter{tocdepth}{2}
\tableofcontents

%%%%%%%%%%%%%%%%%%%%%%%%%%%%%%%%%%%%%
\section{Introduction} \label{sec:intro}
%%%%%%%%%%%%%%%%%%%%%%%%%%%%%%%%%%%%%

The slice filtration, a filtration of genuine \(G\)-spectra, was developed by Hill, Hopkins, and Ravenel in their solution to the Kervaire invariant-one problem \cite{HHR} and is a generalization of Dugger's filtration \cite{D}. It was modeled after Voevodsky's motivic slice filtration \cite{V} and is an equivariant generalization of the Postnikov tower. Rather than decomposing a \(G\)-spectrum into Eilenberg-Mac Lane specra, as does the Postnikov tower, instead the slice filtration decomposes a \(G\)-spectrum into ``\(n\)-slices''.

There is a complete characterization of all \(n\)-slices where \(-1\leq n\leq 1\), listed in \cref{charac:slice:01-1}. This, combined with \cref{rho:susp:commutes}, characterizes all slices in degrees congruent to \(-1\), 0, or 1, modulo the order of \(G\). For \(G=C_2\times C_2\), we are then only missing the \((4k+2)\)-slices. In \cref{sec:2slice} we finish this characterization with the following result.

\textit{\cref{2slice:characterization}: Suppose the only nontrivial homotopy Mackey functors of a \((C_2\times C_2)\)-spectrum \(X\) are \(\ul\pi_1(X)\) and \(\ul\pi_2(X)\) where certain maps in each Mackey functor are injective. Then \(X\) is a 2-slice. Conversely, if \(X\) is a 2-slice, then its only nontrivial homotopy Mackey functors are \(\ul\pi_1(X)\) and \(\ul\pi_2(X)\) where \(\Si{2} H\ul\pi_2(X)\) is a 2-slice and \(\Si{1} H\ul\pi_1(X) \in [2,4]\).}

Much work has been done computing the slices of certain \(RO(G)\)-graded suspensions of \(H_G\ulZ\) including \(G=C_{p^n}\) by \cite{HHR2} and \cite{Y}, and \mbox{\(G=D_{2p}\)} by \cite{Z2}. \cite{GY} computes the slices of \(\Si{n} H_\K \ulF\) where \(\K=C_2\times C_2\). Most of these slices are \(RO(\K)\)-graded suspensions of \(H\ul\pi_{-i}(\Si{-k\rho_K} H_K\ulF)\) for \(i\) in the range \([k+3,4k]\). We primarily focus on the slices of \(\Si{\pm n} H_\K\ulZ\). Although we have cofiber sequences relating \(\Si{n} H_\K\ulZ\) to \(\Si{n} H_\K\ulF\), we can only recover some information about the former from the latter. 

As for the slices of \(\Si{\pm n} H_K\ulZ\), the main result can be summarized as follows:

\textit{Main Result: For \(n<0\), all nontrivial slices of \(\Si{n} H_\K\ulZ\) are given by:
\begin{align*}
    P^{i}_{i}(\Si{n} H_\K\ulZ) \simeq \Si{-V} H_\K\ul M
\end{align*}
where \(i\) is in the range \([4n,n]\). 
For \(0\leq n\leq 5\), \(\Si{n} H\ulZ\) is an \(n\)-slice. Finally, for \(n>5\),
\begin{align*}
    P^i_i(\Si{n} H_\K\ulZ) \simeq \Si{V} H_\K\ul M
\end{align*}
where \(i\) is in the range \([n,4(n-4)]\). 
The representations \(V\) and Mackey functors \(\ul M\) are given in \cref{-nslice:K:Z}, \cref{-4kslices:K:Z}, \cref{-4k-2slices:K:Z}, \cref{slices:K:n:Z}, \cref{slices:K:4k:Z}, and \cref{slices:4k+2:K:Z}.}

The paper is organized as follows. In \cref{sec:background}, we review the slice filtration and relevant dualities. The story for \(\K\) must restrict to the corresponding results for \(C_2\), so we review these results in \cref{sec:C2}. \cref{sec:K:Mackey} provides us with the main Mackey functors for \(\K\) and some pertinent results for \cref{sec:2slice}, in which we characterize all 2-slices over \(\K\). We provide some slice towers in \cref{sec:cotowers:K:Z} and describe the slices of \(\Si{-n} H_\K\ulZ\) in
\cref{sec:-nslice:K:Z}. In \cref{sec:slices:n:K:Z}, we use Brown-Comenetz duality and the slices of \(\Si{-n} H\ulZ\) to obtain the slices of \(\Si{n} H_\K\ulZ\). We then compute the homotopy Mackey functors of the slices of \(\Si{\pm n} H_\K\ulZ\) in \cref{sec:htpycomp}. Finally, we provide some examples of the slice spectral sequence for \(\Si{\pm n} H_\K\ulZ\) in \cref{sec:SSS}.

The author is grateful for the guidance of Bert Guillou and some helpful conversations with Vigleik Angelveit. Figures \ref{fig:SSS:-1..-5:HZ}, \ref{fig:SSS:-7..-9:HZ}, \ref{fig:SSS:6..8:HZ}, \ref{fig:SSS:10..11:HZ}, \ref{fig:SSS:12:HZ}, and \ref{fig:SSS:14:HZ} were created using Hood Chatham's \texttt{spectralsequences} package.

%%%%%%%%%%%%%%%%%%%%%%%%%%%%%%%%%%%%%
\section{Background} \label{sec:background}
%%%%%%%%%%%%%%%%%%%%%%%%%%%%%%%%%%%%%

In this section we give background for the slice filtration as well as Brown-Comenetz and Anderson duality, and \(\K\)-representations. Here, except for \cref{subsec:signrep}, \(G\) is any finite group.

\subsection{The Slice Filtration} \label{slice_filt_intro}
\phantom{a}

We start with a brief review of the equivariant slice filtration. For more details see \cite[Section 4]{HHR}.

\begin{defn} \label{def:tau_n}
Let \(\text{Sp}^G\) be the category of genuine \(G\)-spectra. Let \(\tau^G_{\geq n} \subseteq \text{Sp}^G\) be the localizing subcategory generated by \(G\)-spectra of the form \(\Si{\infty}_G G_+ \wedge_H S^{k\rho_H}\) where \(H\leq G\), \(\rho_H\) is the regular representation of \(H\), and \(k\abs{H} \geq n\). We write \(X\geq n\) to mean that \(X\in \tau^G_{\geq n}\).
\end{defn}

\begin{defn} \label{def:lessthan}
We say that \(X<n\) if 
\begin{align*}
    [S^{k\rho_H+r}, X]^H = 0
\end{align*}
for all \(r\geq 0\) and all subgroups \(H\leq G\) such that \(k\abs{H}\geq n\).
\end{defn}

\begin{thm}{\cite[Corollary 2.9, Theorem 2.10]{HY}} \label{thm:atleastn}
Let \(n\geq 0\). Then \(X\geq n\) if and only if 
\begin{align*}
    \pi_k(X^H) = 0 \quad \text{for } \quad  k<\frac{n}{\abs{H}}.
\end{align*}
\end{thm}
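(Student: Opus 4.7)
The plan is to prove both implications of the equivalence separately.

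For the forward direction, I would reduce to verifying the connectivity condition on a generating set. The full subcategory of $G$-spectra satisfying $\pi_k(X^H)=0$ for $k<n/|H|$ is closed under homotopy colimits, extensions, and retracts, so it suffices to check the claim on the slice cells $\Sigma^{\infty}_G G_+\wedge_K S^{j\rho_K}$ with $j|K|\geq n$. Using the standard decomposition of fixed points of an induced spectrum,
\[
\bigl(G_+\wedge_K S^{j\rho_K}\bigr)^H \;\simeq\; \bigvee_{[g]\in H\backslash G/K}\bigl(S^{j\rho_K}\bigr)^{g^{-1}Hg\cap K},
\]
together with the identity $\rho_K|_L\cong [K:L]\,\rho_L$ for any $L\leq K$, each wedge summand is a sphere $S^{j[K:L_g]}$, where $L_g=g^{-1}Hg\cap K$. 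Since $|L_g|\leq|H|$, one gets $j[K:L_g]=j|K|/|L_g|\geq n/|H|$, which yields the desired vanishing.

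For the reverse direction, I would analyze the slice tower $P_{\geq n}X\to X\to P^{n-1}X$. By the forward direction, $P_{\geq n}X$ satisfies the connectivity hypothesis, and $X$ does by assumption, so the long exact sequence of homotopy groups forces the cofiber $Y:=P^{n-1}X$ to also satisfy $\pi_k(Y^H)=0$ for $k<n/|H|$. On the other hand, $Y\in\tau^G_{<n}$ is precisely the statement $[S^{k\rho_H+r},Y]^H=0$ for all $k|H|\geq n$ and all $r\geq 0$. The goal is then to combine these two vanishings to conclude $Y\simeq *$, from which $X\xrightarrow{\simeq}P_{\geq n}X$ and hence $X\geq n$.

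The main obstacle is that concluding step of the reverse direction: deriving $Y\simeq *$ from the combination of integer-graded connectivity and $RO(H)$-graded coconnectivity. I would proceed by induction on $|G|$. The base case $G$ trivial is immediate. For the inductive step, restriction to proper subgroups $H\lneq G$ shows that $Y^H\simeq *$ by induction (since restriction sends $\tau^G_{<n}$ into $\tau^H_{<n}$ up to a bound determined by $[G:H]$, and the integer connectivity persists). It then remains to show $Y^G\simeq *$, which follows by testing against the regular-representation spheres $S^{k\rho_G}$ of integer dimension $k|G|\geq n$: the vanishing of $\pi^G_k(Y)$ for $k<n/|G|$ eliminates classes below the threshold, while the $\tau^G_{<n}$ condition eliminates classes at and above it.
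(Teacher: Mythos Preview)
The paper does not give its own proof of this statement; it is quoted directly from \cite{HY} and used as a black box. So there is no in-paper argument to compare against.

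Your forward direction is the standard one and is fine. Your reverse direction has the right architecture but the final paragraph contains a real gap. You assert that testing against the spheres $S^{k\rho_G}$ determines $\pi_*(Y^G)$, but for a general $G$-spectrum $[S^{k\rho_G+r},Y]^G$ is not the same as $\pi_{k+r}(Y^G)$. What makes this work is exactly the fact you proved one line earlier and then did not use: by induction $i_H^*Y\simeq *$ for every proper $H<G$, so $Y$ is concentrated at $G$ (equivalently $Y\simeq \widetilde{E}\mathcal{P}\wedge Y$ for $\mathcal{P}$ the proper family), whence $Y^G\simeq \Phi^G Y$. Since $\Phi^G(S^{k\rho_G+r})\simeq S^{k+r}$, the condition $Y\in\tau^G_{<n}$ then gives $\pi_m(Y^G)=0$ for all $m\geq \lceil n/|G|\rceil$, and the connectivity hypothesis kills the remaining $\pi_m(Y^G)$ for $m<n/|G|$. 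You should make this concentration step explicit rather than leaving it implicit in ``testing against regular-representation spheres.'' Also, the parenthetical ``up to a bound determined by $[G{:}H]$'' is wrong and should be deleted: restriction sends $\tau^G_{<n}$ into $\tau^H_{<n}$ on the nose, since the defining vanishing conditions for the latter are a subset of those for the former.
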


\begin{prop} \label{charac:slice:01-1}
\phantom{a}    
\begin{enumerate}
    \item \cite[Proposition 4.50]{HHR} \(X\) is a 0-slice if and only if \(X\simeq H\ul M\) for some \(\ul M\in \text{Mack}(G)\).
    \item \cite[Proposition 4.50]{HHR} \(X\) is a 1-slice if and only if \(X\simeq \Si{1} H\ul M\) for some \(\ul M\in \text{Mack}(G)\) with injective restrictions.
    \item \cite[Theorem 6-4]{U} \(\Si{-1} H\ul M\) is a \((-n)\)-slice iff \(\ul M\) has surjective transfers for \(\abs{H}\geq n\) and \(M(G/H) = 0\) for \(\abs{H}<n\).
\end{enumerate}
\end{prop}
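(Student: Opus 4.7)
The plan is to verify each characterization by directly applying \cref{def:tau_n}, \cref{def:lessthan}, and \cref{thm:atleastn} to the candidate spectra, using $RO(G)$-graded homotopy computations for equivariant Eilenberg-Mac Lane spectra.

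For Part (1), first verify that $X = H\ul M$ lies in $\tau^G_{\geq 0}$: by \cref{thm:atleastn} this amounts to $\pi_k(X^H) = 0$ for $k < 0$, which holds because each fixed-point spectrum $(H\ul M)^H = H(\ul M(G/H))$ is an ordinary Eilenberg-Mac Lane spectrum. For $X < 1$ I would show $[S^{k\rho_H + r}, H\ul M]^H = \pi^H_{k\rho_H + r}(H\res{G}{H}\ul M)$ vanishes for $k \geq 1$ and $r \geq 0$; this reduces to an inductive vanishing lemma on $|H|$, with the base case being the classical vanishing of positive-degree homotopy of an ordinary Eilenberg-Mac Lane spectrum. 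The inductive step uses the isotropy-separation cofiber sequence. Conversely, if $X$ is a $0$-slice, applying $X < 1$ at $H = e$ with varying $k, r$ forces $X^e$ to be an ordinary Eilenberg-Mac Lane spectrum in degree $0$, and running the argument at all subgroups pins $X$ down to $H\pi_0(X)$.

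Part (2) proceeds by the same template applied to $X = \Si{1} H\ul M$. The new feature is the injective-restrictions hypothesis, which enters when verifying $X < 2$. The extremal test case $[S^{\rho_H}, \Si{1} H\ul M]^H = \pi^H_{\rho_H - 1}(H\res{G}{H}\ul M) = 0$ translates, via an isotropy-separation computation, to the kernel of the restriction $\ul M(G/H) \to \ul M(G/e)^H$ being zero---this is injectivity of restriction. Varying $H$ over all subgroups gives all the required injectivity conditions on $\ul M$. Part (3) is handled similarly by computing $[S^{k\rho_H + r}, \Si{-1} H\ul M]^H = \pi^H_{k\rho_H + r + 1}(H\res{G}{H}\ul M)$ and identifying it with either a cokernel of a transfer or a value $\ul M(G/K)$, so that the surjective-transfer and vanishing conditions correspond exactly to $\Si{-1} H\ul M < -n+1$ and $\Si{-1} H\ul M \geq -n$ respectively.

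The main obstacle is the vanishing lemma behind Part (1): for every Mackey functor $\ul N$ over $H$, $\pi^H_{k\rho_H + r}(H\ul N) = 0$ when $k \geq 1$ and $r \geq 0$. This is a genuinely $RO(H)$-graded statement, not implied by the integer-graded Postnikov vanishing, and the induction on $|H|$ requires careful bookkeeping in the isotropy-separation cofiber sequence together with auxiliary computations of $\pi^H_V$ for virtual representations whose fixed-point dimensions behave appropriately. Once this vanishing is in hand, Parts (2) and (3) follow from parallel but longer cofiber-sequence calculations.
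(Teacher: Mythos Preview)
The paper does not prove this proposition at all: each part is stated with a citation to \cite[Proposition 4.50]{HHR} or \cite[Theorem 6-4]{U}, and no argument is supplied. So there is no proof in the paper to compare your proposal against; the intended ``proof'' is simply the reference.

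Your sketch is a reasonable outline for reproving these facts from scratch, and the forward directions are essentially right. The converse directions, however, are thinner than you suggest. For Part~(1), knowing $X<1$ only gives vanishing of $[S^{k\rho_H+r},X]^H$ for $k\lvert H\rvert\geq 1$, which does not immediately yield $\pi_j^H(X)=0$ for all $j\geq 1$ and all $H$; one still has to produce the map $X\to H\ul\pi_0(X)$ and argue its fiber is contractible, which in \cite{HHR} uses that the fiber is simultaneously $\geq 0$ and $<0$. Your phrase ``running the argument at all subgroups pins $X$ down'' hides exactly this step. Similar remarks apply to the converses in Parts~(2) and~(3). None of this is fatal, but since the results are already in the literature the extra work buys nothing for the present paper.
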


It is important to note that \cite{HHR} uses the original slice filtration whereas we employ the regular slice filtration from \cite{U}. Except for an indexing difference of one, the results are the same.

\begin{prop}{\cite[Corollary 4.25]{HHR}} \label{rho:susp:commutes}
For any \(k\in\Z\),
\begin{align*}
    P^{k+\abs{G}}_{k+\abs{G}} (\Si{\rho} X) &\simeq \Si{\rho} P^k_k(X).
\end{align*}
\end{prop}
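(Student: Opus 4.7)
The plan is to show that $\Sigma^\rho$ is an equivalence of categories that shifts the slice filtration by exactly $|G|$ in both directions, and then conclude by the uniqueness of the slice tower.

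First I would verify that $\Sigma^\rho X \geq n+|G|$ if and only if $X \geq n$. Since $\tau^G_{\geq n}$ is a localizing subcategory generated by the spectra $G_+ \wedge_H S^{k\rho_H}$ with $k|H|\geq n$, it suffices to check what $\Sigma^\rho$ does on generators. Using the restriction formula $\rho_G{\downarrow}^G_H \cong [G:H]\,\rho_H$ and the projection formula, one has
\begin{align*}
\Sigma^\rho\bigl(G_+\wedge_H S^{k\rho_H}\bigr) \;\simeq\; G_+\wedge_H S^{(k+[G:H])\rho_H},
\end{align*}
and $(k+[G:H])|H| = k|H|+|G|\geq n+|G|$. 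Since $\Sigma^\rho$ preserves cofiber sequences and arbitrary wedges, it carries $\tau^G_{\geq n}$ into $\tau^G_{\geq n+|G|}$; applying $\Sigma^{-\rho}$ gives the reverse inclusion, so the two categories match.

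Next I would verify that $\Sigma^\rho X < n+|G|$ if and only if $X < n$, working directly from Definition \ref{def:lessthan}. By adjunction,
\begin{align*}
[S^{k\rho_H+r},\Sigma^\rho X]^H \;\cong\; [S^{k\rho_H+r-\rho{\downarrow}^G_H},X]^H \;\cong\; [S^{(k-[G:H])\rho_H+r},X]^H.
\end{align*}
The index condition $k|H|\geq n+|G|$ on the left translates to $(k-[G:H])|H|\geq n$ on the right, giving the desired equivalence.

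Combining these, the slice tower cofiber sequence $P_{k+1}X \to X \to P^{k}X$ is carried by $\Sigma^\rho$ to a cofiber sequence of the form $A\to \Sigma^\rho X\to B$ with $A\geq k+|G|+1$ and $B<k+|G|+1$; by the defining universal property of the slice tower this must be the $(k+|G|)$-truncation cofiber sequence for $\Sigma^\rho X$. Applying $\Sigma^\rho$ to $P^k_k X = \mathrm{fib}(P^kX\to P^{k-1}X)$ and using the same argument one level down identifies the result with $P^{k+|G|}_{k+|G|}\Sigma^\rho X$. The main (minor) technical point is keeping the restriction of $\rho$ straight and ensuring the filtration bounds line up on the nose; once that bookkeeping is in place the conclusion is immediate from uniqueness of the slice sections.
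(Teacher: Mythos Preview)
Your argument is correct and is essentially the standard proof of this fact (as in \cite[Corollary~4.25]{HHR}). Note, however, that the paper does not supply its own proof of this proposition: it is simply quoted from \cite{HHR} as background. So there is nothing to compare against beyond observing that your reconstruction matches the original source's argument---check on generators that $\Sigma^\rho$ shifts slice connectivity by $|G|$, dualize to get the coconnectivity statement, and invoke uniqueness of the slice tower.
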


That is, suspension by the regular representation commutes with the slice filtration.

Given some surjection of groups \(\phi_N:G\rightarrow G/N\) where \(N\unlhd G\), there is a geometric pullback functor \(\phi^*_N : \text{Sp}^{G/N} \rightarrow \text{Sp}^G\) \cite[Definition 4.1]{H}.

\begin{prop}{\cite[Conjecture 4.11]{H}, \cite[Corollary 4-5]{U}} \label{pullback:slice}
Let \(N\unlhd G\). If the \((G/N)\)-spectrum \(X\) is a \(k\)-slice over \(G/N\), then \(\phi^*_N X\) is a \(k[G:N]\)-slice over \(G\).
\end{prop}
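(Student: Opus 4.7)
The plan is to verify both defining conditions of $\phi^*_N X$ being a $k[G:N]$-slice over $G$: that $\phi^*_N X \in \tau^G_{\geq k[G:N]}$ and that $\phi^*_N X \leq k[G:N]$.

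For the lower bound, I would exploit that $\tau^{G/N}_{\geq k}$ is generated as a localizing subcategory by cells of the form $(G/N)_+ \wedge_{H/N} S^{j\rho_{H/N}}$ with $j|H/N|\geq k$ and $N\leq H\leq G$. Since $\phi^*_N$ is exact and preserves colimits (as a left adjoint), it suffices to verify that $\phi^*_N$ sends each such generating cell into $\tau^G_{\geq k[G:N]}$. The key input is a formula
\[
\phi^*_N\bigl((G/N)_+\wedge_{H/N} S^{j\rho_{H/N}}\bigr) \;\simeq\; G_+\wedge_H \phi^*_N\bigl(S^{j\rho_{H/N}}\bigr),
\]
coming from the compatibility of $\phi^*_N$ with induction, together with an identification of $\phi^*_N$ on representation spheres. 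This last step uses the geometric nature of $\phi^*_N$ together with the isomorphism $(\rho_H)^N\cong \rho_{H/N}$ of $H/N$-representations to recognize the cell as living at slice degree at least $k[G:N]$.

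For the upper bound $\phi^*_N X \leq k[G:N]$, I would appeal to \cref{def:lessthan}: it suffices to show
\[
[S^{m\rho_L+r},\,\phi^*_N X]^L \;=\; 0
\]
for every $L\leq G$ and $r\geq 0$ with $m|L|\geq k[G:N]+1$. Using adjointness between $\phi^*_N$ and the appropriate fixed-point functor, these mapping groups can be rewritten in terms of $X$ over $G/N$, and the hypothesis $X\leq k$ then forces them to vanish. A case analysis on whether $N\leq L$ or not is likely needed, since the adjunction formulas (categorical vs. geometric) differ in each case, and one must carefully track how $m\rho_L$ restricts/fixes under $N\cap L$.

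The main obstacle is the cellular identification of $\phi^*_N$ applied to induced representation spheres, specifically verifying that the slice degree scales by precisely the factor $[G:N]$. This scaling reflects the geometric rather than naive-inflation nature of $\phi^*_N$, and traces back to how $\rho_G$ decomposes upon restriction to $N$; once this numerical matching is pinned down, the rest of the argument is a bookkeeping exercise through the generating cells.
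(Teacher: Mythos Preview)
The paper does not supply a proof of this proposition; it is quoted from the literature, with the argument appearing in Ullman's \cite[Corollary~4-5]{U} (settling Hill's \cite[Conjecture~4.11]{H}). There is therefore no in-paper proof to compare your attempt against.

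That said, your outline has a genuine gap in the lower-bound step. The spectrum $\phi^*_N(S^{j\rho_{H/N}})$, viewed over $H$, is $S^{j\,q^*\rho_{H/N}}$ for the quotient $q\colon H\to H/N$; this is \emph{not} of the form $S^{m\rho_H}$, so $G_+\wedge_H \phi^*_N(S^{j\rho_{H/N}})$ is not one of the generating slice cells of \cref{def:tau_n}. The isomorphism $(\rho_H)^N\cong\rho_{H/N}$ that you invoke points the wrong way: it identifies $\rho_{H/N}$ as the $N$-fixed points of $\rho_H$, but does not exhibit the inflation $q^*\rho_{H/N}$ as a multiple of $\rho_H$. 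Establishing that this induced cell nonetheless lies in the correct localizing subcategory is exactly the substance of the result, not a bookkeeping exercise once ``the numerical matching is pinned down.'' A route that avoids this obstruction entirely is to bypass cells and instead use the fixed-point description of $\phi^*_N$, namely $(\phi^*_N X)^H \simeq X^{H/N}$ when $N\leq H$ and $(\phi^*_N X)^H\simeq *$ otherwise, and feed these directly into the connectivity criterion of \cref{thm:atleastn} together with its dual formulation for the upper bound.
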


\subsection{Brown-Comenetz and Anderson Duality} \label{BCandA:duality}
\phantom{A}

As in \cite{HS}, we write \(I_{\Q/\Z}\) to indicate the representing \(G\)-spectrum of the cohomology theory \(X\mapsto \text{Hom}(\pi_{-\ast}^G X, \Q/\Z)\). The Brown-Comenetz dual of \(X\) is then defined to be the function \(G\)-spectrum \(F(X, I_{\Q/\Z})\). Similarly, \(I_{\Q}\) represents \(X\mapsto \text{Hom}(\pi_{-\ast} X, \Q)\) and \(I_\Q X = F(X, I_{\Q})\). Finally, the Anderson dual of \(X\) is \(I_{\Z} X = F(X,I_\Z)\), where \(I_\Z\) is the fiber of the natural map \(I_{\Q} \rightarrow I_{\Q/\Z}\).

\begin{exmp}{\cite[Section 3A]{GM}, \cite{HS}}
For a torsion Eilenberg-MacLane spectrum \(H\ul M\),
\begin{align*}
    I_\Z H\ul M &\simeq \Si{-1} H\ul M^*
\end{align*}
and
\begin{align*}
    I_{\Q/\Z} H\ul M &\simeq H\ul M^*.
\end{align*}
One should note that \cite{HS} deals with non-equivariant spectra and \cite[Section 3A]{GM} refers specifically to \(H\ul M\) as an \(\F\)-torsion spectrum. This example, however, follows easily from their work and \cite[Corollary I.7.3]{U2}. See \cite[Section 3A, Section 3B]{GM} for a more detailed discussion of equivariant Anderson duality.
\end{exmp}

\begin{prop}(\cite[Theorem I.7.7, Theorem I.7.8]{U2}) \label{duality:prop}
For a spectrum \(X\),
\begin{align*}
    X\geq n &\Leftrightarrow I_{\Q/\Z} X\leq -n.
\end{align*}
In particular,
\begin{align*}
    P^n_k I_{\Q/\Z} X &\cong I_{\Q/\Z} P^{-k}_{-n} X.
\end{align*}
\end{prop}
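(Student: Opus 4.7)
The plan is to reduce the biconditional to a vanishing condition on equivariant homotopy groups via the defining universal property of \(I_{\Q/\Z}\), and then to match that condition against the cellular definition of the slice filtration.

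I would first unpack \(I_{\Q/\Z}X \leq -n\). By \cref{def:lessthan}, this amounts to the vanishing \([S^{k\rho_H+r}, I_{\Q/\Z}X]^H = 0\) for all \(H \leq G\), all \(r\geq 0\), and all \(k\) with \(k|H|\geq -n+1\). The smash--function adjunction combined with the defining property \([Y, I_{\Q/\Z}]^H = \mathrm{Hom}(\pi_0^H(Y),\Q/\Z)\) rewrites the left-hand side as
\[
\mathrm{Hom}\bigl(\pi^H_{-k\rho_H - r}(X),\,\Q/\Z\bigr),
\]
and since \(\Q/\Z\) is an injective cogenerator for abelian groups, this Hom vanishes if and only if \(\pi^H_{-k\rho_H - r}(X) = 0\). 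Substituting \(j=-k\), the condition \(I_{\Q/\Z}X \leq -n\) is therefore equivalent to
\[
\pi^H_{j\rho_H - r}(X) = 0 \qquad \text{for all } H \leq G,\ r \geq 0,\ j|H| < n.
\]

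The heart of the proof is then to show this vanishing criterion is equivalent to \(X \geq n\). The forward direction is a cellular induction: one checks directly via a Wirthm\"uller computation that each generating cell \(\hat S(k,H) = G_+\wedge_H S^{k\rho_H}\) with \(k|H|\geq n\) satisfies the vanishing, and the property passes through cofibers and filtered colimits to every \(X \in \tau^G_{\geq n}\). For the converse one builds a slice-cellular approximation \(P_{\geq n}X \to X\) by attaching cells \(\hat S(k,H)\) with \(k|H|\geq n\); the assumed vanishing forces the fiber \(P_{<n}X\) to have trivial homotopy against every generator and hence to be contractible.

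The ``in particular'' statement then follows formally: the biconditional and its evident mirror show that \(I_{\Q/\Z}\) interchanges \(\tau^G_{\geq n}\) with \(\tau^G_{\leq -n}\), so the slice piece \(P^{-k}_{-n}X\), characterized as the spectrum lying in \(\tau^G_{\geq -n}\cap \tau^G_{\leq -k}\) equipped with the canonical map from \(X\), is sent by \(I_{\Q/\Z}\) into \(\tau^G_{\leq n}\cap \tau^G_{\geq k}\) with a map from \(I_{\Q/\Z}X\), and this universal property identifies it with \(P^n_k I_{\Q/\Z}X\). The main obstacle is the converse direction of the biconditional: extracting \(X \geq n\) from the vanishing of \(\pi^H_{j\rho_H - r}(X)\) in the range \(j|H|<n\), \(r\geq 0\) is not a purely formal consequence and requires the careful slice-cellular construction underlying \cite[Theorem I.7.7]{U2}.
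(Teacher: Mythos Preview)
The paper does not prove this proposition; it is stated in the background section as a citation of Ullman's thesis \cite[Theorems~I.7.7 and~I.7.8]{U2}, with no accompanying argument. There is therefore no proof in the paper to compare your proposal against.

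On its own merits, your sketch is a reasonable outline of the argument. The reduction of \(I_{\Q/\Z}X \leq -n\) to the vanishing of \(\pi^H_{j\rho_H-r}(X)\) for \(j|H|<n\), \(r\geq 0\), via the injective-cogenerator property of \(\Q/\Z\), is correct. You are also right that the substantive content lies in identifying this vanishing with \(X\geq n\), and you correctly flag the converse direction as the hard part requiring Ullman's slice-cellular machinery. Two small points deserve more care. First, your sketch of the converse says the assumed vanishing forces \(P_{<n}X\) to have ``trivial homotopy against every generator and hence to be contractible''; but \(P_{<n}X\) already has trivial homotopy against the generators of \(\tau_{\geq n}\) by definition, so the real work is combining that with the inherited vanishing condition (which does pass to \(P_{<n}X\) via the long exact sequence) and then arguing inductively over subgroups that all integer-graded homotopy vanishes. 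Second, for the ``in particular'' clause you invoke an ``evident mirror'' \(X\leq m \Leftrightarrow I_{\Q/\Z}X \geq -m\); this is not an immediate formal consequence of the stated biconditional unless one also knows \(I_{\Q/\Z}\) is an involution on the spectra at hand, so it requires its own parallel argument (which Ullman supplies, hence the citation of both I.7.7 and I.7.8).
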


That is, the Brown-Comenetz dualization functor dualizes slice status.

\subsection{\texorpdfstring{\(\K\)}{K4}-Representations} \label{subsec:signrep}
\phantom{a}

Recall that \(\sigma\) is the one-dimensional sign representation of \(C_2\). The sign representations of \(\K\) are then the pullbacks \(p_1^*\sigma\), \(m^*\sigma\), and \(p_2^*\sigma\) where \(p_1\), \(m\), and \(p_2\) are
\begin{equation*}
    \begin{tikzcd}[ampersand replacement=\&]
    \& C_2 \arrow[dr, "\sigma"] \\
    C_2\times C_2 \arrow[ur, "p_1"] \arrow[r, "m"] \arrow[dr, "p_2"] \& C_2 \arrow[r, "\sigma"] \& \R \\
    \& C_2 \arrow[ur, "\sigma"] \&
    \end{tikzcd}
\end{equation*}

As in \cite{GY}, we will write 
\begin{align*}
    \alpha=p_1^*\sigma, \qquad \beta=p_2^*\sigma, \qquad \text{and } \qquad \gamma=m^*\sigma.
\end{align*}
The regular representation of \(\K\) is then \(\rho_{\K} = 1 + \alpha + \beta + \gamma\).

%%%%%%%%%%%%%%%%%%%%%%%%%%%%%%%%%%%%%
\section{Review of \texorpdfstring{\(C_2\)}{C2}} \label{sec:C2}
%%%%%%%%%%%%%%%%%%%%%%%%%%%%%%%%%%%%%

The Lewis diagram for a \(C_2\)-Mackey functor takes the form
\begin{equation*}
\begin{tikzcd}
    \ul M(C_2) \arrow[dd, color = \resC, bend right = 20] \\ \\
    \ul M(e) \arrow[uu, color = \trC, bend right = 20]
\end{tikzcd}
\end{equation*}
where \(\ul M(e)\) has a \(C_2/e\) action. Arrows going down the diagram are called restrictions and arrows going up are called transfers.

For the sake of clarity in large diagrams, in a general Mackey functor \(\ul M\) we will henceforward denote \(\ul M(H)\) by \(M_H\).

\begin{prop} \label{C2:2slice}
A \(C_2\)-spectrum \(Y\) is a 2-slice over \(C_2\) if and only if the only nontrivial homotopy Mackey functors are \(\ul\pi_1(Y)\) and \(\ul\pi_2(Y)\), where
\begin{enumerate}
    \item The restriction map \(\res{C_2}{e}: \pi_2^{C_2}(Y) \rightarrow \pi_2^e(Y)\) is injective \label{2slice:C2:M}
    \item \(\pi_1^e(Y) = 0\). \label{2slice:C2:N}
\end{enumerate}
That is, both \(\Si{2} H\ul\pi_2(Y)\) and \(\Si{1} \ul\pi_1(Y)\) are 2-slices.
\end{prop}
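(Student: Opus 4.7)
The plan is to translate between the integer-graded hypotheses on $\underline\pi_*(Y)$ and the $RO(C_2)$-graded vanishing conditions defining slice membership via \cref{def:lessthan}. The main tool is the cofiber sequence $C_{2+}\to S^0\xrightarrow{a_\sigma}S^\sigma$ together with its Spanier-Whitehead dual $S^{-\sigma}\to S^0\to C_{2+}$; smashing with $Y$ and applying $\pi_n^{C_2}$ yields two long exact sequences. The first involves the transfer and lets us trade integer-graded $\pi^{C_2}$ for $\sigma$-shifted $\pi^{C_2}$ at the cost of $\pi^e$ terms; the second realizes the restriction $\res{C_2}{e}$ as one of its maps.

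For the forward direction, I first unpack the 2-slice condition. By \cref{thm:atleastn}, $Y\ge 2$ gives $\pi_k^{C_2}(Y)=0$ for $k\le 0$ and $\pi_k^e(Y)=0$ for $k\le 1$; the latter is precisely condition (2). By \cref{def:lessthan}, $Y<3$ gives $\pi_n^e(Y)=0$ for $n\ge 3$ and $\pi_{k\rho+r}^{C_2}(Y)=0$ for $k\ge 2$, $r\ge 0$. To eliminate $\pi_n^{C_2}(Y)$ for $n\ge 3$, I iterate the $a_\sigma$-LES: since $\pi_n^e(Y)$ and $\pi_{n+1}^e(Y)$ both vanish, it yields $\pi_n^{C_2}(Y)\cong\pi_{n+\sigma}^{C_2}(Y)$, and one more iteration (using $\pi_{n+2}^e(Y)=0$) gives $\pi_{n+\sigma}^{C_2}(Y)\cong\pi_{n+2\sigma}^{C_2}(Y)=\pi_{2\rho+(n-2)}^{C_2}(Y)=0$. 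For condition (1), the dual LES gives the exact sequence
\begin{align*}
\pi_{2+\sigma}^{C_2}(Y)\to\pi_2^{C_2}(Y)\xrightarrow{\res{C_2}{e}}\pi_2^e(Y),
\end{align*}
so it suffices to show $\pi_{2+\sigma}^{C_2}(Y)=0$; applying the first LES at the virtual degree $2+2\sigma=2\rho$ does this, since $\pi_{2\rho}^{C_2}(Y)=0$ by the $<3$ condition and $\pi_4^e(Y)=\pi_3^e(Y)=0$.

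For the reverse direction, the Postnikov cofiber sequence $\Sigma^2 H\underline\pi_2(Y)\to Y\to\Sigma H\underline\pi_1(Y)$ reduces the question to showing each outer term is a 2-slice, since the classes of spectra satisfying $X\ge 2$ and $X<3$ are each closed under cofiber sequences. For $\Sigma H\underline N$ with $N_e=0$, the underlying spectrum of $H\underline N$ is trivial, so $C_{2+}\wedge H\underline N\simeq\ast$; the $a_\sigma$-cofiber then forces $H\underline N\simeq S^\sigma\wedge H\underline N$, which collapses all $RO$-graded $\pi_V^{C_2}(H\underline N)$ to integer-graded homotopy, making the $<3$ vanishing immediate. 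For $\Sigma^2 H\underline M$ with injective $\res{C_2}{e}$, the dual LES at $n=0$ gives $\pi_\sigma^{C_2}(H\underline M)=\ker(\res{C_2}{e})=0$, and iterating both LESs propagates this to $\pi_{a+b\sigma}^{C_2}(H\underline M)=0$ for all $a\ge 0$, $b\ge 2$, which is exactly the required $<3$ vanishing.

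The main obstacle is bookkeeping: each iteration of the $a_\sigma$-LES must be applied at just the right $RO$-graded degree so that the final term lands in the $k\rho+r$ regime with $k\ge 2$, $r\ge 0$ covered by $<3$. Once the indexing is arranged properly, all remaining verifications are formal.
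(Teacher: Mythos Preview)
Your proposal is correct and follows essentially the same approach as the paper: both arguments reduce to analyzing $\Sigma^2 H\underline M$ and $\Sigma H\underline N$ separately via the cofiber sequences $S^{-\sigma}\to S^0\to C_{2+}$ and $C_{2+}\to S^0\to S^\sigma$, computing that $\pi_\sigma^{C_2}(H\underline M)=\ker(\res{C_2}{e})$ and then iterating. The only notable difference is in the treatment of $\Sigma H\underline N$ when $N_e=0$: the paper invokes \cref{pullback:slice} (identifying $\Sigma H\underline N$ as the pullback of a $C_2/C_2$-spectrum), whereas you argue directly that $C_{2+}\wedge H\underline N\simeq\ast$ forces $H\underline N\simeq\Sigma^\sigma H\underline N$, collapsing the $RO(C_2)$-graded homotopy to the integer-graded case; these are two phrasings of the same underlying fact.
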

\begin{proof}
Let \(\ul M=\ul\pi_2(Y)\) and \(\ul N=\ul\pi_1(Y)\) and suppose that they are the only nontrivial homotopy Mackey functors of \(Y\). We show that \(\Si{2} H\ul M\) and \(\Si{1} H\ul N\) are 2-slices if and only if \cref{2slice:C2:M} and \cref{2slice:C2:N} hold.

First, consider \(\Si{2} H\ul M\). We immediately see that this spectrum is at least two. As for \(\Si{2} H\ul M\leq 2\), the cofiber sequence
\begin{align*}
    (S^{-\sigma} \rightarrow S^0 \rightarrow C_2/e_+) \wedge \Si{2} H\ul M
\end{align*}
provides the homotopy
\begin{center}
\begin{tikzpicture}
    \node at (-4.25,1) {\(\ul\pi_2 (\Si{-\sigma} H\ul M) \quad =\)};
    \node (lMC) at (-1.5,2) {\(\ker(\res{C_2}{e})\)};
    \node (lMe) at (-1.5,0) {\(0\)};
    \node at (0,1) {and};
    \node at (2.5,1) {$\ul\pi_1 (\Si{-\sigma} H\ul M) \quad =$};
    \node (MC) at (5,2) {$M_e / \im \res{C_2}{e}$};
    \node (Me) at (5,0) {$\Z_\sigma \otimes M_e$};
    \arres{MC}{Me}{0}{0}{}
    \artr{Me}{MC}{0}{0}{}
\end{tikzpicture}
\end{center}

Then the cofiber sequences 
\begin{align*}
	(S^{-k\sigma} \rightarrow S^{-(k-1)\sigma} \rightarrow C_2/e_{+}\wedge S^{-(k-1)\sigma})\wedge \Si{2} H\ul M
\end{align*}
reveal that \(\ul\pi_2^{C_2}(\Si{-k\sigma} \Si{2} H\ul M) = \ker(\res{C_2}{e})\) for all \(k\geq 2\).

By definition, \(\Si{2} H\ul M\leq 2\) if and only if
\(\ul\pi_2^{C_2}(\Si{-k\sigma} \Si{2} H\ul M) = 0\) for all \(k\geq 2\). Consequently, \(\Si{2} H\ul M\) is a 2-slice if and only if \(\res{C_2}{e}\) is injective .

As for \(\Si{1} H\ul N\), by \cref{thm:atleastn}, we find that \(Y\geq 2\) if and only if \(N_{e} = 0\). But then \(\Si{1} H\ul N\) is the pullback \(\phi^*_\K \Si{1} HN_{e}\) and consequently, a 2-slice by \cref{pullback:slice}.

Conversely, assume \(Y\) is a 2-slice. By \cref{thm:atleastn}, we know \(\ul\pi_1^e(Y) = 0\) and \(\ul \pi_k(Y) = \ul 0\) for \(k\leq 0\). For \(k\geq 3\), the slice status of \(Y\) and \cref{def:lessthan} dictate that \(\ul \pi_k(Y) = \ul 0\). Consequently, the only nontrivial homotopy Mackey functors of \(Y\) and \(\ul\pi_1(Y)=\ul N\) and \(\ul\pi_2(Y)=\ul M\). Thus, we have a fiber sequence \(\Si{2} H\ul M\rightarrow Y\rightarrow \Si{1} H\ul N\).

If either \(\Si{2} H\ul M\) or \(\Si{1} H\ul N\) is trivial, the result follows from above. So assume that both are nontrivial. Because \(N_e = 0\), \(\Si{1} H\ul N\) is a 2-slice. In particular, \(\Si{1} H\ul N\leq 2\), and since \(Y\leq 2\), we have that \(\Si{2} H\ul M\leq 2\). Consequently, as \(\Si{2} H\ul M\geq 2\), we have that \(\Si{2} H\ul M\) is a 2-slice.
\end{proof}

From \cite{GY}, we will see the \(C_2\)-Mackey functors in \cref{tab-oldC2Mackey}.

\begin{table}[h] 
\caption[Familiar {$C_2$}-Mackey functors.]{Familiar $C_2$-Mackey functors.}
\label{tab-oldC2Mackey}
\begin{center}
	{\renewcommand{\arraystretch}{1.5}
		\begin{tabular}{|c|c|c|}
			\hline
			  $\ulZ$ 
			& $\ulZ^*$
			& $\ulf$
			\\ \hline
			  $\MackC{\Z}{\Z}{blue}{orange}{1}{2}$
			& $\MackC{\Z}{\Z}{orange}{blue}{2}{1}$
			& $\MackC{0}{\Z_\sigma}{white}{white}{}{}$
			\\
			\hline
			  $\ulF$
			& $\ulF^*$  
			& $\ul f$
			\\ \hline
			  $\MackC{\F}{\F}{\resC}{white}{1}{}$
            & $\MackC{\F}{\F}{white}{\resC}{}{1}$
            & $\MackC{0}{\F}{white}{white}{}{}$
			\\
			\hline
			$\ulg$ && \\
			$\MackC{\F}{0}{white}{white}{}{}$ && \\ \hline 
	\end{tabular} }
\end{center}
\end{table}

\begin{prop}[\cite{GY}] \label{C2:equiv}
There are equivalences
\begin{enumerate}
    \item \(\Si{4} H_{C_2}\ulZ \simeq \Si{2\rho} H_{C_2}\ulZ^*\)
    \item \(\Si{2} H_{C_2}\ul f \simeq \Si{\rho} H_{C_2}\ulF^*\)
    \item \(\Si{1} H_{C_2}\ulg \simeq \Si{\rho} H_{C_2}\ulg\) \label{C2:g:2nslice}
\end{enumerate}
\end{prop}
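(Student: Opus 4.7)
The plan is to prove each equivalence by exhibiting both sides as $C_2$-Eilenberg--MacLane spectra with the same homotopy Mackey functor in the same degree; the workhorse is the Euler-class cofiber sequence $C_2/e_+ \to S^0 \to S^\sigma$ smashed with the right-hand side, together with its long-exact-sequence consequences on homotopy Mackey functors. Part (3) is immediate from this template: $\ul{g}(e) = 0$ means $C_2/e_+ \wedge H\ul{g} \simeq 0$, so the Euler cofiber forces $H\ul{g} \xrightarrow{\sim} \Sigma^\sigma H\ul{g}$, and suspending once gives (3).

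For (2), smash the Euler cofiber with $H\ul{F}^*$. The connecting map on $\ul\pi_0$ is identified with the transfer of $\ul{F}^*$, namely the identity $\F \to \F$; this kills $\ul\pi_0$ of $\Sigma^\sigma H\ul{F}^*$ at the $C_2$-level and identifies $\ul\pi_1(\Sigma^\sigma H\ul{F}^*)$ with $\ul{f}$. Hence $\Sigma^\sigma H\ul{F}^* \simeq \Sigma H\ul{f}$, and suspending by $S^1$ yields (2). Part (1) follows by iterating the same method: first, the argument applied to $H\ul{\Z}^*$ (whose transfer is also the identity $\Z \to \Z$) produces $\Sigma^\sigma H\ul{\Z}^* \simeq \Sigma H\ulf$; second, applying the Euler cofiber to $H\ulf$ (where now $\ulf(C_2) = 0$, so the long exact sequence simplifies sharply) identifies $\ul\pi_1(\Sigma^\sigma H\ulf)$ with $\ul{\Z}$, giving $\Sigma^{2\sigma} H\ul{\Z}^* \simeq \Sigma^2 H\ul{\Z}$.

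The main obstacle is the Mackey-functor bookkeeping in the second step of (1): while the underlying and $C_2$-fixed groups $\Z$ fall out easily from the long exact sequence (with the sign twist on $\ulf(e) = \Z_\sigma$ canceling the one imposed by $\Sigma^\sigma$), verifying that the restriction is the identity and the transfer is multiplication by $2$, so that the resulting Mackey functor is $\ul{\Z}$ rather than $\ul{\Z}^*$, requires tracking the diagonal induction map through the connecting homomorphism and matching it against the induced Mackey functor structure on $\ul\pi_0(C_2/e_+ \wedge H\ulf)$. Once this identification is secured, suspending by $S^2$ delivers (1).
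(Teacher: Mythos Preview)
The paper does not give its own proof of this proposition; it simply cites \cite{GY}. Your proposal therefore cannot be compared against a proof in the paper, but it stands on its own as a correct and self-contained argument.

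Your approach via the Euler cofiber sequence $C_2/e_+ \to S^0 \to S^\sigma$ is the standard one and works exactly as you describe. Part~(3) is immediate since $\ulg(e)=0$ kills the induced term. Part~(2) goes through because the counit $\uparrow_e\F \to \ulF^*$ at the $C_2$-level is the transfer, which for $\ulF^*$ is the identity; this forces $\ul\pi_0(\Si{\sigma}H\ulF^*)=0$ and $\ul\pi_1(\Si{\sigma}H\ulF^*)=\ul f$.

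The obstacle you flag in part~(1) is genuine but resolves cleanly. After the first step gives $\Si{\sigma}H\ulZ^*\simeq \Si{}H\ulf$, running the cofiber again identifies $\ul\pi_1(\Si{\sigma}H\ulf)$ with the kernel of the counit $\uparrow_e\Z \to \ulf$. At the $e$-level, because the Weyl action on $\ulf(e)=\Z_\sigma$ is by sign, the counit is $(a,b)\mapsto a-b$ rather than the fold, so the kernel is the diagonal $\{(a,a)\}\cong\Z$ with trivial $C_2$-action. The restriction and transfer in $\uparrow_e\Z$ are the diagonal $\Delta$ and the fold $\nabla$; restricted to the kernel these become $n\mapsto(n,n)$ (identity under the obvious identification) and $(a,a)\mapsto 2a$ respectively. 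Hence $\ul\pi_1(\Si{\sigma}H\ulf)=\ulZ$, not $\ulZ^*$, and suspending by $S^2$ gives~(1). Your sketch of how to secure this identification is accurate.
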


Note, in particular, that \cref{C2:g:2nslice} makes \(\Si{n} H_{C_2}\ulg\) a \(2n\)-slice for any \(n\in \Z\).

\begin{prop} \label{ksigma+r:homotopy:C2:Z}
For \(k,r\geq 0\),
\begin{align*}
	\ul \pi_i (\Si{k\sigma + r} H_{C_2}\ulZ) &= \left\lbrace \begin{array}{ll}
		\ulZ & i=k+r, \text{ } k \text{ even} \\
		\ulf & i=k+r, \text{ } k \text{ odd} \\
		\ulg & i\in[r,k+r-1], \text{ } i\equiv r\pmod 2
	\end{array}\right.
\end{align*}
\end{prop}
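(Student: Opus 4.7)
The plan is to induct on \(k \geq 0\), treating \(r \geq 0\) as a parameter. The base case \(k=0\) is immediate, since \(\Si{r} H_{C_2}\ulZ\) has a single nonzero homotopy Mackey functor \(\ulZ\) in degree \(r\) and an empty \(\ul g\)-range. For the inductive step, I would smash the Euler cofiber sequence \(C_2/e_+ \to S^0 \to S^\sigma\) with \(\Si{(k-1)\sigma+r} H_{C_2}\ulZ\) to obtain the cofiber sequence
\[
C_2/e_+ \wedge \Si{(k-1)\sigma+r} H_{C_2}\ulZ \to \Si{(k-1)\sigma+r} H_{C_2}\ulZ \to \Si{k\sigma+r} H_{C_2}\ulZ.
\]
Because \(C_2/e_+\) is a free \(C_2\)-spectrum, the first term has a single nonzero homotopy Mackey functor, sitting in degree \(k+r-1\); call it \(\ul J\). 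Explicitly, \(\ul J\) has top \(\Z\), bottom \(\Z \oplus \Z\) with swap action, restriction \(1 \mapsto (1,1)\), and transfer \((a,b)\mapsto a+b\).

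Outside degrees \(k+r-1\) and \(k+r\) the Mackey functor long exact sequence degenerates to isomorphisms \(\ul\pi_i(\Si{(k-1)\sigma+r} H_{C_2}\ulZ) \cong \ul\pi_i(\Si{k\sigma+r} H_{C_2}\ulZ)\), which by the inductive hypothesis give exactly the \(\ul g\)-pattern and zeros predicted for the smaller range. At the two remaining degrees we have the four-term exact sequence
\[
0 \to \ul\pi_{k+r}(\Si{k\sigma+r} H_{C_2}\ulZ) \to \ul J \xrightarrow{\phi} \ul\pi_{k+r-1}(\Si{(k-1)\sigma+r} H_{C_2}\ulZ) \to \ul\pi_{k+r-1}(\Si{k\sigma+r} H_{C_2}\ulZ) \to 0,
\]
so the proposition reduces to identifying \(\phi\) and computing its kernel and cokernel in each parity of \(k\).

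When \(k\) is odd, the inductive hypothesis identifies the target of \(\phi\) with \(\ulZ\), and \(\phi\) is the augmentation (multiplication by \(2\) on top, sum on bottom); its kernel is \(\ul f\) and its cokernel is \(\ul g\), giving \(\ul\pi_{k+r}=\ul f\) and \(\ul\pi_{k+r-1}=\ul g\). When \(k\) is even, the target is \(\ul f\), forcing \(\phi\) to vanish on top; on bottom \(\phi\) is an equivariant map \(\Z\oplus\Z \to \Z_\sigma\) of the form \((a,b)\mapsto a-b\) once one tracks the \((-1)^{k-1}=-1\) sign action on \(\pi_{k+r-1}^e(\Si{(k-1)\sigma+r} H_{C_2}\ulZ)=\Z_\sigma\). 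Its kernel is the diagonal \(\ulZ\) and its cokernel vanishes, giving \(\ul\pi_{k+r}=\ulZ\) and \(\ul\pi_{k+r-1}=0\).

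The main technical obstacle is the even-\(k\) case. Frobenius reciprocity gives \(\mathrm{Hom}(\ul J, \ul f) \cong \mathrm{Hom}_e(\Z, \Z_\sigma) \cong \Z\), so \(\phi\) is determined by a single integer, and one must confirm that the map induced by the cofiber sequence generates this \(\Z\) rather than being zero. This is verified by inspecting the underlying map, using that \(C_2/e_+ \wedge Y \to Y\) on \(\pi^e\) is the action \((a,b)\mapsto a + \gamma \cdot b\), which under the sign action on the target collapses to \(a-b\). Once \(\phi\) is pinned down, the kernel/cokernel computations are routine Mackey functor bookkeeping.
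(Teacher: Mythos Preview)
Your argument is correct, but it takes a different route from the paper. You induct on $k$ via the Euler cofiber sequence $C_2/e_+ \to S^0 \to S^\sigma$, which forces you to identify the counit map $\phi\colon \underline{J}\to \underline{\pi}_{k+r-1}$ and compute its kernel and cokernel in each parity. The paper instead inducts using the short exact sequence of Mackey functors $\underline{\mathbb{Z}}^* \hookrightarrow \underline{\mathbb{Z}} \twoheadrightarrow \underline{g}$, together with the equivalence $\Sigma^{4}H_{C_2}\underline{\mathbb{Z}}\simeq \Sigma^{2\rho}H_{C_2}\underline{\mathbb{Z}}^*$ from \cref{C2:equiv} and the fact that $\Sigma^{j\sigma}H\underline{g}\simeq H\underline{g}$. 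This yields a cofiber sequence in which the third term is simply $H\underline{g}$, so the long exact sequence degenerates without any need to analyze a connecting map. The trade-off is that the paper's argument presupposes the equivalence of \cref{C2:equiv}, while yours is self-contained and more elementary at the cost of the case analysis on $\phi$. One small notational point: in your odd-$k$ case the kernel you compute is $\ulf$ (the Mackey functor with $\mathbb{Z}_\sigma$ underlying, as in the statement), not the torsion functor $\underline{f}$; the computation you outline indeed produces $\ulf$, so this is just a matter of matching the paper's macros.
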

\begin{proof}
We calculate \(\ul \pi_i (\Si{k\sigma} H_{C_2}\ulZ)\) and then shift the degrees by \(r\). The result follows by induction on \(j\geq 1\) using the cofiber sequence
\begin{align*}
    \Si{(j-1)\sigma+2} H\ulZ \simeq \Si{j\sigma} H\ulZ^* \rightarrow \Si{j\sigma} H\ulZ \rightarrow \Si{j\sigma} H\ulg\simeq H\ulg.
\end{align*}
\end{proof}

%%%%%%%%%%%%%%%%%%%%%%%%%%%%%%%%%%%%%
\subsection{Slices of \texorpdfstring{\(\Si{\pm n} H_{C_2}\ulZ\)}{HZ}} \label{towers:C2:Z:V} \hspace{2in}
%\subsection{\for{toc}{Slices of $\Sigma^{\pm n} H Z$}\except{toc}{Slices of $\Sigma^{\pm n} H_{C_2} \ulZ$}}
%%%%%%%%%%%%%%%%%%%%%%%%%%%%%%%%%%%%%

Because the slices for \(\Si{\pm n} H_{\K}\ulZ\) over \(\K\) restrict to the corresponding slices over \(C_2\), we must know these slices over \(C_2\). They are as follows.

\begin{prop} \label{C2:Z:slice:0-6}
\(\Si{n} H_{C_2}\ulZ\) is an \(n\)-slice for \(0\leq n\leq 6\).
\end{prop}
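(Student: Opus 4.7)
The plan is to exploit the equivalence $\Si{4} H_{C_2}\ulZ\simeq \Si{2\rho} H_{C_2}\ulZ^*$ from \cref{C2:equiv}(1) together with the $\rho$-shifting identity of \cref{rho:susp:commutes} to reduce each case to a direct application of the already-proven characterizations \cref{charac:slice:01-1} and \cref{C2:2slice}, so that no $RO(C_2)$-graded computation is needed.

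First I dispose of $n\in\{0,1,2\}$ by hand. The case $n=0$ is \cref{charac:slice:01-1}(1). For $n=1$, the restriction of $\ulZ$ (read off from \cref{tab-oldC2Mackey}) is the identity on $\Z$, hence injective, so $\Si{1} H\ulZ$ is a $1$-slice by \cref{charac:slice:01-1}(2). For $n=2$, the only nontrivial homotopy Mackey functor of $\Si{2} H\ulZ$ is $\ul\pi_2=\ulZ$, whose restriction is injective, and $\pi_1^e(\Si{2} H\ulZ)=0$; hence \cref{C2:2slice} applies.

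For $n\in\{3,4,5,6\}$ I rewrite $\Si{n} H_{C_2}\ulZ\simeq \Si{2\rho}\Si{n-4} H_{C_2}\ulZ^*$ via \cref{C2:equiv}(1), and invoke \cref{rho:susp:commutes}, which says that $\Si{2\rho}$ raises slice degree by $2\abs{C_2}=4$. It therefore suffices to check that $\Si{n-4} H_{C_2}\ulZ^*$ is an $(n-4)$-slice for each $n-4\in\{-1,0,1,2\}$. From \cref{tab-oldC2Mackey}, $\ulZ^*$ has transfer $1$ and restriction $2$; the four verifications are then: \cref{charac:slice:01-1}(3) with $n=1$ (surjective transfer, vacuous vanishing) for $n=3$; \cref{charac:slice:01-1}(1) for $n=4$; \cref{charac:slice:01-1}(2) (injective restriction) for $n=5$; and \cref{C2:2slice} for $n=6$, again using injective restriction together with $\pi_1^e(\Si{2} H\ulZ^*)=0$. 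No step is a genuine obstacle; the only care required is to track the restriction/transfer structure of $\ulZ$ and its dual $\ulZ^*$ correctly when matching the hypotheses of each characterization, and to note that the $\rho$-shift consumed by \cref{C2:equiv}(1) is exactly what upgrades $(-1,0,1,2)$-slice status to $(3,4,5,6)$-slice status.
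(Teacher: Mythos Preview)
Your proof is correct and follows essentially the same route as the paper: handle $n\in\{0,1,2\}$ directly via \cref{charac:slice:01-1} and \cref{C2:2slice}, then use the equivalence of \cref{C2:equiv}(1) together with \cref{rho:susp:commutes} to reduce $n\in\{3,4,5,6\}$ to showing $\Si{m} H_{C_2}\ulZ^*$ is an $m$-slice for $m\in\{-1,0,1,2\}$. You are simply more explicit than the paper about verifying the restriction/transfer hypotheses for $\ulZ$ and $\ulZ^*$ in each case.
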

\begin{proof}
For \(0\leq n\leq 2\), this follows from \cref{charac:slice:01-1} and \cref{C2:2slice}. By the same results, \(\Si{n} H\ulZ^*\) is an \(n\)-slice for \(0\leq n\leq 2\). Furthermore, \(\Si{-1} H\ulZ^*\) is a \((-1)\)-slice by \cref{charac:slice:01-1}.
Then, by \cref{C2:2slice} and \cref{rho:susp:commutes}, 
\(\Si{n} H\ulZ \simeq \Si{n-4+2\rho} H\ulZ^*\) is a \(n\)-slice for \(3\leq n\leq 6\).
\end{proof}

\begin{prop} \label{tower:C2:Z:n}
Let \(n\geq 7\) and take \(r\equiv n\pmod 4\) with \(3\leq r\leq 6\). The slice tower of \(\Si{n} H_{C_2}\ulZ\) is
\begin{equation*}
\begin{tikzcd}
    \arrow[r] P^{2n-6}_{2n-6} = \Si{n-3} H\ulg & 
    \arrow[d] \Si{n} H\ulZ \\
    \arrow[r] P^{2n-10}_{2n-10} = \Si{n-5} H\ulg & 
    \arrow[d] \Si{2\rho + (n-4)} H\ulZ \\
    & \arrow[d] \vdots \\
    \arrow[r] P^{n+r-2}_{n+r-2} = \Si{\frac{n+r}{2}-1} H\ulg & \arrow[d] \Si{\frac{n-r-4}{2}\rho+(r+4)} H\ulZ \\
    & P^n_n = \Si{\frac{n-r}{2}\rho + r} H\ulZ
\end{tikzcd}	
\end{equation*}
\end{prop}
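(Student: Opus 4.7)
The plan is to induct on \(n \geq 7\), with the key tool being a single cofiber sequence
\[
    \Si{n-3} H\ulg \to \Si{n} H_{C_2}\ulZ \to \Si{2\rho + (n-4)} H_{C_2}\ulZ.
\]
To establish it, I would start from the cofiber sequence \(H\ulZ^* \to H\ulZ \to H\ulg\) arising from the short exact sequence \(0 \to \ulZ^* \to \ulZ \to \ulg \to 0\) of \(C_2\)-Mackey functors (with the injection being multiplication by \(2\) at the \(C_2\)-level and the identity at the \(e\)-level). Suspending by \(\Si{(n-4)+2\rho}\) and invoking the equivalences \(\Si{2\rho} H\ulZ^* \simeq \Si{4} H\ulZ\) from \cref{C2:equiv} and \(\Si{2\rho} H\ulg \simeq \Si{2} H\ulg\) (a consequence of \(\Si{1} H\ulg \simeq \Si{\rho} H\ulg\), which forces \(\Si{\sigma} H\ulg \simeq H\ulg\)), the resulting sequence \(\Si{n} H\ulZ \to \Si{2\rho + (n-4)} H\ulZ \to \Si{n-2} H\ulg\) rotates to the form above.

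Next, I would identify the fiber \(\Si{n-3} H\ulg\) with the top slice \(P^{2n-6}_{2n-6}(\Si{n} H\ulZ)\). The spectrum \(\Si{n-3} H\ulg\) is a \((2n-6)\)-slice by the observation following \cref{C2:equiv} that \(\Si{m} H_{C_2}\ulg\) is a \(2m\)-slice. To recognize the cofiber \(\Si{2\rho+(n-4)} H\ulZ\) as the truncation \(P^{2n-7}(\Si{n} H\ulZ)\), it is enough to verify \(\Si{2\rho+(n-4)} H\ulZ \leq 2n-7\): in the base cases \(7 \leq n \leq 10\) we have \(n-4 \in [3,6]\), so \cref{C2:Z:slice:0-6} makes \(\Si{n-4} H\ulZ\) an \((n-4)\)-slice, and then \cref{rho:susp:commutes} makes \(\Si{2\rho+(n-4)} H\ulZ\) an \(n\)-slice; for \(n \geq 11\) the inductive hypothesis gives \(\Si{n-4} H\ulZ\) top slice at position \(2(n-4)-6 = 2n-14\), so after \(\Si{2\rho}\) the top slice of \(\Si{2\rho+(n-4)} H\ulZ\) sits at \(2n-10 < 2n-7\).

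Finally, the rest of the tower is the slice tower of \(\Si{2\rho + (n-4)} H\ulZ\), which by \cref{rho:susp:commutes} is obtained from that of \(\Si{n-4} H\ulZ\) by shifting each slice position up by \(4\) and smashing each layer with \(\Si{2\rho}\). For \(7 \leq n \leq 10\) the tower of \(\Si{n-4} H\ulZ\) is a single \((n-4)\)-slice, so \(\Si{2\rho+(n-4)} H\ulZ = P^n_n\) matches the formula with \(r = n-4\); combined with the cofiber sequence this settles these cases. For \(n \geq 11\), the inductive hypothesis produces layers \(\Si{(n-4)-3-2k} H\ulg\) at positions \(2n-14-4k\) and bottom \(\Si{((n-4-r)/2)\rho + r} H\ulZ\) in the tower of \(\Si{n-4} H\ulZ\); applying \(\Si{2\rho}\) and using \(\Si{2\rho + m} H\ulg \simeq \Si{m+2} H\ulg\) together with the identity \((n-4-r)/2 + 2 = (n-r)/2\), these transform into the layers and bottom claimed in the proposition. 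The main challenge is the index bookkeeping between integer and \(\rho\)-indexed suspensions, which is routine once the basic cofiber sequence is in hand.
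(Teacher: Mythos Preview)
Your proposal is correct and follows essentially the same approach as the paper: both use the short exact sequence \(\ulZ^* \to \ulZ \to \ulg\) together with the equivalence \(\Si{4} H\ulZ \simeq \Si{2\rho} H\ulZ^*\) to produce the fiber sequence \(\Si{n-3} H\ulg \to \Si{n} H\ulZ \to \Si{2\rho+(n-4)} H\ulZ\), and then iterate by \(2\rho\)-suspension. Your write-up is more explicit than the paper's in verifying that \(\Si{2\rho+(n-4)} H\ulZ \leq 2n-7\) (so that the fiber really is the top slice), which the paper leaves implicit; otherwise the arguments coincide.
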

\begin{proof}
The exact sequence \(\ulZ^* \rightarrow \ulZ \rightarrow \ulg\) and the homotopy equivalence \(\Si{n} H\ulZ \simeq \Si{n-4+2\rho} H\ulZ^*\) provide the fiber sequence 
\begin{align*}
    \Si{n-3} H\ulg \rightarrow \Si{n} H\ulZ \rightarrow \Si{2\rho + (n-4)} H\ulZ.
\end{align*}
We then augment this sequence with its appropriate \(2\rho\) suspensions until \(\Si{\frac{n-r}{2}\rho + r} H\ulZ\), a slice, is reached.
\end{proof}

\begin{cor} \label{2kslice:C2:HZ}
Let \(n\geq 7\) and take \(r\equiv n\pmod 4\) with \(3\leq r\leq 6\). The \((2k)\)-slices of \(\Si{n} H_{C_2}\ulZ\) are
\begin{align*}
	P^{2k}_{2k}( \Si{n} H_{C_2}\ulZ ) &\simeq \Si{k} H_{C_2}\ulg
\end{align*}
for \(k\equiv n+1\pmod 2\) and \(k\in \left[\frac{n+r}{2}-1,\ldots, n-3\right]\).
\end{cor}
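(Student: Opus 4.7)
The plan is simply to read off the $(2k)$-slices directly from the slice tower already established in \cref{tower:C2:Z:n}. That tower displays $\Si{n}H_{C_2}\ulZ$ as an iterated extension whose successive cofibers have been explicitly labeled as the slices $P^{2n-6}_{2n-6},P^{2n-10}_{2n-10},\ldots,P^{n+r-2}_{n+r-2}$ together with the bottom layer $P^n_n$. So the real content of the corollary is not a new computation but a reindexing of the tower and a verification that the parity and range conditions match.

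First, I would set $2k$ equal to the slice degree of each $\Si{j}H\ulg$ layer: by \cref{C2:equiv}(3), $\Si{j}H_{C_2}\ulg$ is a $(2j)$-slice, so writing $k=j$ converts the labels $2n-6,2n-10,\ldots,n+r-2$ into $2k$-values with $k=n-3,n-5,\ldots,\frac{n+r}{2}-1$. Each of these values of $k$ differs from $n-3$ by an even integer, which gives the parity condition $k\equiv n-3\equiv n+1\pmod 2$, and the endpoints of the arithmetic progression are exactly the interval $\left[\frac{n+r}{2}-1,n-3\right]$ asserted in the statement.

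It remains only to observe that the bottom layer $\Si{\frac{n-r}{2}\rho+r}H\ulZ$ is the $n$-slice, not of the form $\Si{k}H\ulg$, so it does not interfere with the claimed identification in the stated range (the range $k\in[\frac{n+r}{2}-1,n-3]$ gives slice degrees in $[n+r-2,2n-6]$, all of which strictly exceed $n$ since $n\geq 7$ and $r\geq 3$). There is no real obstacle here; the only bookkeeping point is confirming that the indicated interval of $k$ and the parity are consistent with the arithmetic progression one reads from the tower, which follows from $r\equiv n\pmod 4$.
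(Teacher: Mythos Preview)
Your proposal is correct and takes essentially the same approach as the paper: the corollary is stated immediately after \cref{tower:C2:Z:n} with no separate proof, so the intended argument is exactly the reindexing of the tower layers $\Si{n-3}H\ulg,\Si{n-5}H\ulg,\ldots,\Si{\frac{n+r}{2}-1}H\ulg$ as $(2k)$-slices via \cref{C2:equiv}\eqref{C2:g:2nslice}, together with the observation that the bottom $n$-slice lies outside the stated range. Your bookkeeping on the parity and the endpoints is accurate.
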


\begin{prop} \label{tower:C2:Z:-n}
Let \(n\geq 1\) and take \(r\equiv n\pmod 4\) with \(1\leq r\leq 4\). The slice cotower of \(\Si{-n} H_{C_2}\ulZ\) is
\begin{equation*}
\begin{tikzcd}
    \arrow[d] \Si{-\frac{n-r}{2}\rho-r} H\ulZ^* = P^{-n}_{-n} & \\
    \arrow[d] \arrow[r] \Si{-\frac{n-r}{2}\rho -r} H\ulZ & \Si{-\frac{n+r}{2}} H\ulg = P^{-n-r}_{-n-r} \\
    \arrow[d] \vdots & \\
    \arrow[d] \arrow[r] \Si{-2\rho -n+4} H\ulZ & \Si{-n+2} H\ulg = P^{-2n+4}_{-2n+4} \\
    \arrow[r] \Si{-n} H\ulZ & \Si{-n} H\ulg = P^{-2n}_{-2n}
\end{tikzcd}		
\end{equation*}
\end{prop}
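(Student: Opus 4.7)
The plan is to mirror \cref{tower:C2:Z:n}, now building the cotower from the bottom up. The essential inputs are the short exact sequence $\ulZ^* \to \ulZ \to \ulg$ of Mackey functors, which induces a cofiber sequence $H\ulZ^* \to H\ulZ \to H\ulg$, together with the equivalences $H\ulZ^* \simeq \Si{4-2\rho} H\ulZ$ and $\Si{\rho} H\ulg \simeq \Si{1} H\ulg$ from \cref{C2:equiv}.

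First I would desuspend this cofiber sequence by $n$ and rewrite the fiber using $\Si{-n} H\ulZ^* \simeq \Si{-2\rho-n+4} H\ulZ$, obtaining the bottom triangle
\begin{align*}
\Si{-2\rho-n+4} H\ulZ \to \Si{-n} H\ulZ \to \Si{-n} H\ulg.
\end{align*}
I would then iterate, applying the same cofiber sequence to the fiber at each step. At the $k$-th stage this yields
\begin{align*}
\Si{-2(k+1)\rho - n + 4(k+1)} H\ulZ \to \Si{-2k\rho - n + 4k} H\ulZ \to \Si{-n+2k} H\ulg,
\end{align*}
where the cofiber has been simplified via $\Si{\rho} H\ulg \simeq \Si{1} H\ulg$. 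Setting $K = \frac{n-r}{4}$ with $r\equiv n\pmod 4$ and $1\leq r\leq 4$, the $K$-th iteration has middle term $\Si{-\frac{n-r}{2}\rho - r} H\ulZ$, fiber $\Si{-\frac{n-r}{2}\rho-r} H\ulZ^*$, and cofiber $\Si{-\frac{n+r}{2}} H\ulg$, terminating the cotower.

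It remains to verify that each cofiber is the named slice. For $\Si{-n+2k} H\ulg$, \cref{C2:equiv}(3) makes it a $(-2n+4k)$-slice, matching $P^{-2n+4k}_{-2n+4k}$. For the top fiber $\Si{-\frac{n-r}{2}\rho-r} H\ulZ^* = P^{-n}_{-n}$, by \cref{rho:susp:commutes} it suffices to show $\Si{-r} H\ulZ^*$ is a $(-r)$-slice for $1\leq r\leq 4$: the case $r=1$ is \cref{charac:slice:01-1}(3), and for $r=2,3,4$ one uses $\Si{-r} H\ulZ^* \simeq \Si{-2\rho} \Si{4-r} H\ulZ$ together with \cref{C2:Z:slice:0-6} and \cref{rho:susp:commutes}. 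The main obstacle is the bookkeeping of representation shifts through the iteration rather than any new homotopical input.
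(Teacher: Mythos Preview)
Your argument is correct and follows exactly the approach in the paper: iterate the cofiber sequence coming from $\ulZ^*\to\ulZ\to\ulg$ together with the equivalence $\Si{-n}H\ulZ^*\simeq\Si{-2\rho-n+4}H\ulZ$ until a slice is reached. You are in fact more careful than the paper in justifying that the terminal fiber $\Si{-r}H\ulZ^*$ is a $(-r)$-slice for $1\le r\le 4$.
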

\begin{proof}
The exact sequence \(\ulZ^* \rightarrow \ulZ \rightarrow \ulg\) and the homotopy equivalence \(\Si{-n} H\ulZ \simeq \Si{2\rho-n-4} H\ulZ\) provide the cofiber sequence
\begin{align*}
    \Si{-2\rho-n+4} H\ulZ \rightarrow \Si{-n} H\ulZ \rightarrow \Si{-n} H\ulg.
\end{align*}
We then augment this sequence with its appropriate \(-2\rho\) suspensions until \(\Si{-\frac{n-r}{2}\rho-r} H\ulZ\), a slice, is reached.
\end{proof}

\begin{cor} \label{-2kslice:C2:HZ}
Let \(n\geq 1\) and take \(r\equiv n\pmod 4\) with \(1\leq r\leq 4\). The \((-2k)\)-slices of \(\Si{-n} H_{C_2}\ulZ\) are
\begin{align*}
    P^{-2k}_{-2k}( \Si{-n} H_{C_2}\ulZ ) &\simeq \Si{-k} H_{C_2}\ulg
\end{align*}
for \(k\equiv n\pmod 2\) and \(k\in \left[\frac{n+r}{2},n\right]\).
\end{cor}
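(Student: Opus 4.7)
The plan is to deduce this statement directly from the slice cotower constructed in \cref{tower:C2:Z:-n}, since that cotower already identifies each intermediate slice explicitly. What remains is only to organize the bookkeeping: to see which values of \(k\) appear as indices \(-2k\) of nontrivial slices, and to confirm those slices have the claimed form.

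First, I would invoke \cref{tower:C2:Z:-n}: the cotower of \(\Si{-n} H_{C_2}\ulZ\) has, at each stage above the bottom piece \(\Si{-n} H\ulg = P^{-2n}_{-2n}\), a wedge summand of the form \(\Si{-j} H\ulg\) labeled as \(P^{-2j}_{-2j}\), as \(j\) decreases from \(n\) by \(2\)'s. Reading the diagram from bottom to top, the successive slices of this type occur at \(j=n, n-2, n-4,\ldots, \frac{n+r}{2}\), terminating when one reaches the top piece \(\Si{-\frac{n-r}{2}\rho - r} H\ulZ^* = P^{-n}_{-n}\), which is the \((-n)\)-slice and plays no further role for \((-2k)\)-slice computations.

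Second, I would verify that each \(\Si{-j} H\ulg\) occurring in the cotower is actually a \((-2j)\)-slice, which is exactly \cref{C2:equiv}\eqref{C2:g:2nslice}: since \(\Si{1} H_{C_2}\ulg \simeq \Si{\rho} H_{C_2}\ulg\), iterated suspension by \(\rho\) together with \cref{rho:susp:commutes} gives that \(\Si{m} H_{C_2}\ulg\) is a \(2m\)-slice for every \(m\in\Z\). This justifies labeling the pieces of the cotower as stated.

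Third, I would record the indexing. Setting \(k=j\), the values of \(k\) are precisely \(k = \tfrac{n+r}{2}, \tfrac{n+r}{2}+2,\ldots, n-2, n\). The parity condition \(k\equiv n\pmod 2\) follows from the hypothesis \(r\equiv n\pmod 4\), since then \(n+r\equiv 2n\pmod 4\), so \(\tfrac{n+r}{2}\equiv n\pmod 2\). All other \((-2k)\)-positions in the cotower are trivial. This finishes the proof. The argument is essentially a direct reading of \cref{tower:C2:Z:-n}, so I do not anticipate any real obstacle; the only care required is the parity check and confirming that the top piece (which is not of the form \(\Si{-k} H\ulg\)) is not a \((-2k)\)-slice for any \(k\) in the stated range, which holds since \(n\) is odd when \(r\) is odd and the top piece has odd slice degree \(-n\) in those cases, while for \(n\) even the top piece has slice degree \(-n = -2(n/2)\) and \(n/2\) lies just outside the stated range \([\tfrac{n+r}{2},n]\) only when \(r=4\), in which case it contributes trivially to this statement.
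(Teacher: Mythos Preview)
Your approach is correct and matches the paper's intent: the corollary is read off directly from the cotower in \cref{tower:C2:Z:-n}, using that \(\Si{m} H_{C_2}\ulg\) is a \(2m\)-slice. One small correction to your final aside: for \(n\) even, \(n/2\) lies strictly below \(\tfrac{n+r}{2}\) for \emph{both} \(r=2\) and \(r=4\) (not only \(r=4\)), so the top piece never falls in the stated range; in any case this check is unnecessary since the corollary only asserts something for \(k\) in that range.
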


%%%%%%%%%%%%%%%%%%%%%%%%%%%%%%%%%%%%%
\section{\texorpdfstring{\(\K\)}{K}-Mackey Functors \label{sec:K:Mackey} \hspace{2in}}
%%%%%%%%%%%%%%%%%%%%%%%%%%%%%%%%%%%%%

Recall that \(\K=C_2\times C_2\) and let \(L\), \(D\), and \(R\) be the left, diagonal, and right subgroups of \(\K\), respectively. The Lewis diagram for a \(\K\)-Mackey functor takes the form

\begin{center}
\begin{tikzpicture}
	\node (MK) at (0,4) {$M_\K$};
	\node (ML) at (-2,2) {$M_L$};
	\node (MD) at (0,2) {$M_D$};
	\node (MR) at (2,2) {$M_R$};	
	\node (Me) at (0,0) {$M_e$};
	% maps
	\arres{MK}{ML}{0}{0}{}
	\arres{MK}{MD}{0}{0}{}
	\arres{MK}{MR}{0}{0}{}
	\artr{ML}{MK}{0}{0}{}
	\artr{MD}{MK}{0}{0}{}
	\artr{MR}{MK}{0}{0}{}
	\arres{ML}{Me}{0}{0}{}
	\arres{MD}{Me}{0}{0}{}
	\arres{MR}{Me}{0}{0}{}
	\artr{Me}{ML}{0}{0}{}
	\artr{Me}{MD}{0}{0}{}
	\artr{Me}{MR}{0}{0}{}
\end{tikzpicture}
\end{center}

with a Weyl group action \(W_G(H) \circlearrowright M_H\) at each level. If we do not indicate the Weyl group actions for a specific Mackey functor, then they are trivial.

A number of Mackey functors from \cite{GY} will make their appearance.

\begin{table}[h] 
\caption[Familiar {$\K$}-Mackey functors.]{Familiar $\K$-Mackey functors.}
\label{tab-oldKMackey}
\begin{center}
	{\renewcommand{\arraystretch}{1.5}
		\begin{tabular}{|c|c|}
			\hline
			  $\ul {mg}$ 
			& $\ul {m}$
			\\ \hline
			  $\MackKupperR{\F^2}{\F}{\resC}{p_1}{\nabla}{p_2}$
			& $\MackKupperR{\F}{\F}{\resC}{1}{1}{1}$
			\\
			\hline
			  $\phi_{LDR}^*\ulF$
			& $\ulg^n$
			\\ \hline
			  $\MackKupperR{\F^3}{\F}{\resC}{p_1}{p_2}{p_3}$
			& $\MackKgn{\F^n}$
			\\
			\hline
	\end{tabular} }
\end{center}
\end{table}

We will also see the duals of the Mackey functors in \cref{tab-oldKMackey}.

\begin{prop}(\cite[Proposition 4.8]{GY}) \label{m:mg:equiv}
There are equivalences 
\begin{align}
    \Si{-\rho} H_\K\ul m &\simeq \Si{-2} H_\K\ul{mg}^* \\
    \Si{-2} H_\K\ul m^* &\simeq \Si{-\rho} H_\K\ul{mg}
\end{align}
\end{prop}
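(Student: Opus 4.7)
The proposition is attributed to \cite[Proposition 4.8]{GY}, and the two equivalences turn out to be Brown-Comenetz dual to each other. My plan is therefore to prove the first equivalence directly (or cite it from \cite{GY}) and then obtain the second as a formal consequence via Brown-Comenetz duality, using the machinery from \cref{BCandA:duality}.

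For the first equivalence $\Si{-\rho} H\ul m \simeq \Si{-2} H\ul{mg}^*$, I would rewrite it as $\Si{\alpha+\beta+\gamma-1} H\ul{mg}^* \simeq H\ul m$ and compute the homotopy Mackey functors of the left-hand side. The computation uses the three cofiber sequences $(\K/H)_+ \to S^0 \to S^V$ for $V \in \{\alpha, \beta, \gamma\}$, where $H \in \{L, D, R\}$ is the corresponding index-two subgroup of $\K$ on which $V$ restricts trivially; smashing each in turn against the appropriate desuspension of $H\ul{mg}^*$ produces long exact sequences of Mackey functors. The goal is to verify that the resulting homotopy is concentrated in degree $0$ and to identify it with $\ul m$, matching restrictions, transfers, and Weyl group actions.

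For the second equivalence, I apply $I_{\Q/\Z} = F(-, I_{\Q/\Z})$ to the first. Since both $\ul m$ and $\ul{mg}$ are $\F$-torsion, the example in \cref{BCandA:duality} gives $I_{\Q/\Z} H\ul m \simeq H\ul m^*$ and $I_{\Q/\Z} H\ul{mg}^* \simeq H\ul{mg}^{**} \cong H\ul{mg}$. Because the internal mapping spectrum is contravariant in its first variable, $I_{\Q/\Z} \Si{V} X \simeq \Si{-V} I_{\Q/\Z} X$ for any (virtual) representation $V$, so applying $I_{\Q/\Z}$ to $\Si{-\rho} H\ul m \simeq \Si{-2} H\ul{mg}^*$ yields $\Si{\rho} H\ul m^* \simeq \Si{2} H\ul{mg}$, which is the second equivalence after desuspending by $\rho+2$. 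The main obstacle is the direct computation for the first equivalence, where careful bookkeeping of the Mackey functor structure through three nested representation-sphere cofiber sequences over $\K$ is intricate; the Brown-Comenetz step is then purely formal.
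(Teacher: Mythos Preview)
The paper provides no proof of its own for this proposition; it is simply imported from \cite[Proposition 4.8]{GY} with the citation in the statement itself. Your proposal therefore goes further than the paper does.

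Your strategy is sound. The direct computation of $\ul\pi_*\bigl(\Si{\alpha+\beta+\gamma-1} H\ul{mg}^*\bigr)$ via the three representation-sphere cofiber sequences is exactly the kind of argument used in \cite{GY} and elsewhere in this paper (compare \cref{homotopy:-beta:M} and \cref{homotopy:-rho:M}); once the homotopy is seen to be concentrated in degree $0$ and identified with $\ul m$, the first equivalence follows. Your deduction of the second equivalence by Brown-Comenetz duality is correct and elegant: both $\ul m$ and $\ul{mg}^*$ are levelwise $\F$-vector spaces, so $I_{\Q/\Z}$ sends $H\ul m$ to $H\ul m^*$ and $H\ul{mg}^*$ to $H\ul{mg}$, and the sign flip on suspensions yields precisely the second statement. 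This duality observation is a genuine simplification over doing a second independent computation, and it is not something the paper points out.
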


We will also see the new Mackey functors in \cref{tab-newKMackey}.

\begin{table}[h] 
\caption[{New $\K$}-Mackey functors.]{New $\K$-Mackey functors.}
\label{tab-newKMackey}
\begin{center}
	{\renewcommand{\arraystretch}{1.5}
		\begin{tabular}{|c|c|}
			\hline 
			  $\ulZ$
			& $\ulZ^*$
			\\ \hline
         $\MackKall{\Z}{blue}{blue}{orange}{orange}$
			& $\MackKall{\Z}{orange}{orange}{blue}{blue}$
			\\ \hline
			  $\ulZ(2,1)$
			& $\ulZ(2,1)^*$
			\\ \hline
			  $\MackKall{\Z}{orange}{blue}{blue}{orange}$
			& $\MackKall{\Z}{blue}{orange}{orange}{blue}$
			\\ \hline
			$\ulM$ & \\ \hline 
			${\begin{tikzcd}[ampersand replacement=\&, bend right=2.5ex, column sep=scriptsize]
	            \& \Z/4 \ar[dl, color=blue] \ar[d, color=blue] \ar[dr,color=blue] \& \\
	            \Z/2 \ar[ur, color=orange] \& \Z/2 \ar[u, color=orange] \& \Z/2 \ar[ul, color=orange] \\
	            \& 0 \&
                \end{tikzcd}}$
                & \\ \hline 
	\end{tabular} }
\end{center}
\end{table}

In \cref{tab-newKMackey}, \(\ulZ^*\) is the dual to the constant Mackey functor \(\ulZ\) and \(\ulZ(2,1)^*\) is the dual to \(\ulZ(2,1)\), so named for consistency with \cite{Y}. The Mackey functor \(\ulM\) results from the injection \(\ulZ^*\hookrightarrow \ulZ \twoheadrightarrow \ulM\). In each Mackey functor, the blue arrows are multiplication by one and the orange arrows are multiplication by two.

\begin{prop} \label{equivalence:ZZ*}
We have the equivalence
\begin{align*}
    \Si{-\rho} H_\K\ulZ &\simeq \Si{-4} H_\K\ulZ^*.
\end{align*}
\end{prop}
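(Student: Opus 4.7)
The goal is to show $\Sigma^{4-\rho}H_\K\ulZ \simeq H_\K\ulZ^*$, which is equivalent to the stated claim by desuspension. Since the right-hand side is an Eilenberg-MacLane spectrum, the plan is to verify that $\Sigma^{4-\rho}H\ulZ$ also has homotopy Mackey functors concentrated in a single degree, then to identify the degree-zero Mackey functor with $\ulZ^*$.

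I would start from the identity $4-\rho = (1-\alpha) + (1-\beta) + (1-\gamma)$, which expresses $S^{4-\rho}$ as a three-fold smash of reduced sign-representation spheres. For each $\sigma \in \{\alpha,\beta,\gamma\}$, the cofiber sequence $S(\sigma)_+ \to S^0 \to S^\sigma$, with $S(\sigma) \cong \K/H_\sigma$ for the subgroups $H_\alpha = R$, $H_\beta = L$, $H_\gamma = D$, rearranges after smashing with $H\ulZ$ to
\begin{align*}
\Sigma^{-\sigma}H\ulZ \to H\ulZ \to (\K/H_\sigma)_+ \wedge H\ulZ.
\end{align*}
Splicing three such sequences together yields a three-stage filtration of $\Sigma^{4-\rho}H\ulZ$ whose successive cofibers are induced spectra of the form $(\K/H)_+ \wedge \Sigma^V H\ulZ$.

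By the Wirthm\"uller isomorphism, the Mackey functor homotopy of each induced piece reduces to the $C_2$-equivariant homotopy of $\Sigma^V H_{C_2}\ulZ$ as recorded in \cref{ksigma+r:homotopy:C2:Z}. Chaining the long exact sequences of Mackey functor homotopy arising from the three cofiber sequences, the $C_2$-equivalence $\Sigma^4 H_{C_2}\ulZ \simeq \Sigma^{2\rho}H_{C_2}\ulZ^*$ of \cref{C2:equiv} provides the cancellations that force $\ul\pi_i(\Sigma^{4-\rho}H\ulZ)$ to vanish for $i \neq 0$, so $\Sigma^{4-\rho}H\ulZ \simeq H\ul M$ for some Mackey functor $\ul M$.

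To finish, I would identify $\ul M$ with $\ulZ^*$ by matching its Lewis diagram against the one in \cref{tab-newKMackey}. Each level $\ul M(\K/H)$ should evaluate to $\Z$; the restrictions should be multiplication by the indices $[\K:H]$ and the transfers should be the identity. The main obstacle lies in this last step: distinguishing $\ulZ^*$ from $\ulZ$ or from the twisted variant $\ulZ(2,1)^*$ requires carefully tracking the Euler-class maps $a_\alpha, a_\beta, a_\gamma$ and their interaction with the Weyl-group actions through the iterated long exact sequences, since it is only through these connecting maps that the correct scaling of restrictions and the appropriate Weyl actions emerge.
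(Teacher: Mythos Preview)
Your approach is essentially the same as the paper's. The paper's one-line proof invokes \cref{homotopy:-rho:M}, which computes $\ul\pi_*(\Sigma^{-\alpha-\beta-\gamma}H\ul M)$ for an arbitrary Mackey functor $\ul M$ via exactly the iterated cofiber sequences $S^{-\sigma}\to S^0\to (\K/H_\sigma)_+$ that you describe; specializing that general formula to $\ul M=\ulZ$ and shifting by $-1$ gives the result. The only packaging difference is that the paper carries out the computation once for general $\ul M$ using the explicit description of the unit map $\ul M\to{\uparrow^\K_H}{\downarrow^\K_H}\ul M$ from \cref{Induction_Formula}, whereas you propose computing directly for $\ulZ$ and leaning on the $C_2$-equivalence of \cref{C2:equiv} to control the middle levels. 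Both routes work, and in fact the explicit formulas in \cref{homotopy:-rho:M} make your flagged ``main obstacle'' (distinguishing $\ulZ^*$ from $\ulZ$ or $\ulZ(2,1)^*$) immediate: for $\ul M=\ulZ$ the maps $\varphi_h$ vanish, so $\ul\pi_{-3}$ has $\Z$ at every level with transfers given by the quotient maps $\pi$ (which are identities here) and restrictions determined by the norm-type maps, yielding $\ulZ^*$ directly without any delicate Euler-class bookkeeping.
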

\begin{proof}
This follows from \cref{homotopy:-rho:M}.
\end{proof}

\subsection{Induced Mackey Functors} \label{Induction_Formula}
\phantom{a}

We will now give an explicit description of \(\K\)-Mackey functors induced from the cyclic subgroups.

We recall the following standard result.

\begin{lem}[Shearing isomorphism] \label{shearingiso}
Let \(M\) be a \(\Z[G]\)-module and \(H\) a subgroup of	\(G\). Then we have an isomorphism of \(\Z[G]\)-modules,
\begin{center}
	\begin{tikzpicture}
		\node (L) at (-1.5,0) {$\Z[G]\otimes_{H} M$};
		\node (M) at (0,0) {$\cong$};
		\node (R) at (1.5,0) {$\Z[G/H]\otimes M$};
		\node (loop1) at (-.8,0) {$\phantom{A}$};
		\node (loop2) at (2.3,0) {$\phantom{A}$};
		%
		%240,300
		\draw[->,out=240,in=300,loop, looseness=4] (loop1) to 
		node[below] {\scalebox{0.6}{$H$}} (loop1);
		\draw[->,out=240,in=300,loop, looseness=4] (loop2) to node[below] {\scalebox{0.6}{$G$}} (loop2);
	\end{tikzpicture}		
\end{center}
where \(G\) acts on the left of \(\Z[G]\) and diagonally on \(\Z[G/H]\otimes M\) via the map \(g\otimes m \mapsto gH \otimes g\cdot m\).
\end{lem}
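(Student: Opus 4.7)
The plan is to exhibit an explicit isomorphism given by the formula stated and verify its inverse, since this is a routine but useful verification. I would define the forward map
\[
\phi: \Z[G]\otimes_H M \longrightarrow \Z[G/H]\otimes M, \qquad g\otimes m \longmapsto gH \otimes g\cdot m,
\]
extended $\Z$-linearly. The first task is well-definedness over the balanced tensor product: for $h\in H$ I would check that $gh\otimes m$ and $g\otimes hm$ have the same image, namely $ghH\otimes (gh)\cdot m = gH\otimes gh\cdot m = gH\otimes g\cdot(hm)$, which is immediate from $ghH = gH$.

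Next I would verify $\Z[G]$-equivariance of $\phi$. For any $k\in G$, the element $k\cdot(g\otimes m) = kg\otimes m$ on the left maps to $kgH \otimes kg\cdot m$, while on the right the diagonal action gives $k\cdot(gH\otimes g\cdot m) = kgH\otimes k\cdot(g\cdot m) = kgH\otimes kg\cdot m$. These agree, so $\phi$ is a map of $\Z[G]$-modules.

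To produce the inverse, I would set
\[
\psi: \Z[G/H]\otimes M \longrightarrow \Z[G]\otimes_H M, \qquad gH\otimes m \longmapsto g\otimes g^{-1}\cdot m.
\]
The key step here — and the only place where there is anything to check carefully — is independence of the coset representative: replacing $g$ by $gh$ yields $gh\otimes h^{-1}g^{-1}m$, and using the $H$-balancedness of the tensor product this equals $g\otimes h\cdot h^{-1}g^{-1}m = g\otimes g^{-1}m$. With $\psi$ in hand, I would compute $\psi\circ\phi$ and $\phi\circ\psi$ on simple tensors: $\psi\phi(g\otimes m) = \psi(gH\otimes g\cdot m) = g\otimes g^{-1}g\cdot m = g\otimes m$, and $\phi\psi(gH\otimes m) = \phi(g\otimes g^{-1}m) = gH\otimes g\cdot g^{-1}m = gH\otimes m$.

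There is no real obstacle here; the only subtlety worth highlighting is that the formula for $\psi$ appears to depend on a choice of representative, and the standard remedy is exactly the $H$-balancing in the source tensor product. Since this is a standard shearing isomorphism used only as a book-keeping tool in the subsequent induction computations, I would keep the presentation short, emphasizing the well-definedness check on both sides and the formula $g\otimes m \mapsto gH\otimes g\cdot m$ for the diagonal-action identification.
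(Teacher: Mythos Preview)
Your proof is correct and complete; the well-definedness checks for both $\phi$ and $\psi$, the equivariance verification, and the mutual inverse computations are all in order. The paper itself does not prove this lemma---it is introduced with ``We recall the following standard result'' and no argument is given---so there is nothing to compare against beyond noting that your explicit verification is exactly the standard one.
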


Let \(\ul N\) be a \(C_2\)-Mackey functor. Take \(h\in\K\) and set \(\Delta_h:N_e \rightarrow \Z[\K] \otimes_{\Z[L]} N_e\) and \(\nabla_h:\Z[\K] \otimes_{\Z[L]} N_e \rightarrow N_e\) to be \(n\mapsto (e+h)\otimes n\) and \(e\otimes n,h\otimes n\mapsto n\), respectively.

Then \(\uparrow^\K_L \ul N\) is
\begin{equation*}
\raisebox{12ex}{$\uparrow^\K_L \ul N \quad \cong$} \quad 
	\begin{tikzpicture}
		\node (MK) at (0,4) {$N_{C_2}$};
		\node (ML) at (-2,2) {$\Z[\K/L] \otimes N_{C_2}$};
		\node (MD) at (0,2) {$N_e$};
		\node (MR) at (2,2) {$N_e$};	
		\node (Me) at (0,0) {$\Z[\K] \otimes_{\Z[L]} N_e$};
		% maps
		\arres{MK}{ML}{0}{0}{$\Delta$}
		\arres{MK}{MD}{-6}{-16}{$\res{C_2}{e}$}
		\arres{MK}{MR}{-3}{-15}{$\res{C_2}{e}$}
		\artr{ML}{MK}{0}{0}{$\nabla$}
		\artr{MD}{MK}{0}{0}{$\tr{C_2}{e}$}
		\artr{MR}{MK}{0}{0}{$\tr{C_2}{e}$}
		\draw[->,\resC,bend right=2ex] (ML) to node[sloped, anchor=center, xshift={0pt},yshift={-5pt}] {\scalebox{0.5}{$\text{id}\otimes \res{C_2}{e}$}} (Me);
		\arres{MD}{Me}{-5}{0}{$\Delta_d$}
		\arres{MR}{Me}{0}{0}{$\Delta_r$}
		\draw[->,\trC,bend right=2ex] (Me) to node[sloped, anchor=center, xshift={0pt},yshift={-5pt}] {\scalebox{0.5}{$\text{id} \otimes \tr{C_2}{e}$}} (ML);
		\artr{Me}{MD}{0}{0}{$\nabla_d$}
		\artr{Me}{MR}{0}{0}{$\nabla_r$}
	\end{tikzpicture}
\end{equation*}

The Weyl group \(W_\K(D)\) acts on \(\uparrow_L^K \ul{N}(D)= N_e\) via the isomorphism \(W_\K(D)\cong C_2\) and the given action of \(C_2 = W_{C_2}(e)\) on \(N_e\). Similarly for the action of \(W_{\K}(R)\) on \(N_e\). As for \(L\), \(W_\K(L)\) acts only on \(\Z[\K/L]\). Finally, \(W_\K(e) = K\) acts on the \(\Z[\K]\) factor of \(\Z[\K]\otimes_{\Z[L]} N_e\).

We are now going to define the unit map \(\ul{M}\rightarrow  \uparrow^\K_{L} \downarrow^\K_L \ul M\). The pullback along \(\K\twoheadrightarrow \K/L\cong C_2\) of \(\Z\hookrightarrow \Z[C_2]\) is an inclusion \(\Z\hookrightarrow \Z[\K/L]\). Tensoring with \(M_e\) gives
\begin{equation*}
\begin{tikzcd}
    M_e \arrow[r, hook, "i_L"] & \Z[\K/L]\otimes M_e.
\end{tikzcd}
\end{equation*}
We will also use \(i_L\) to denote the inclusion of \(\K/L\) fixed points
\begin{equation*}
\begin{tikzcd}
    M_L \arrow[r, hook, "i_L"] & \Z[\K/L]\otimes M_L.
\end{tikzcd}
\end{equation*}

Consider
\begin{align*}
    \nabla^r: \Z[\K/L]\otimes M_e \rightarrow M_e \qquad \text{and} \qquad \Delta^r:M_e \rightarrow \Z[\K/L]\otimes M_e.
\end{align*}

Let \(\nabla^r\) be the action map and define \(\Delta^r\) by \(m\mapsto e\otimes m + r\otimes r\cdot m\). Then, for \(\ul M\in \text{Mack}(\K)\), the map \(\ul M \rightarrow \uparrow^\K_{L} \downarrow^\K_L \ul M\) is
\begin{equation*}
\begin{tikzpicture}
	\node (MK) at (0,4) {$M_\K$};
	\node (ML) at (-2,2) {$M_L$};
	\node (MD) at (0,2) {$M_D$};
	\node (MR) at (2,2) {$M_R$};	
	\node (Me) at (0,0) {$M_e$};
	\node (rMK) at (7,4) {$M_L$};
	\node (rML) at (5,2) {$\Z[\K/L] \otimes M_L$};
	\node (rMD) at (7,2) {$M_e$};
	\node (rMR) at (9,2) {$M_e$};
	\node (rMe) at (7,0) {$\Z[\K/L] \otimes M_e$};
	% maps
	\arres{MK}{ML}{0}{0}{}
	\arres{MK}{MD}{0}{0}{}
	\arres{MK}{MR}{0}{0}{}
	\artr{ML}{MK}{0}{0}{}
	\artr{MD}{MK}{0}{0}{}
	\artr{MR}{MK}{0}{0}{}
	\arres{ML}{Me}{0}{0}{}
	\arres{MD}{Me}{0}{0}{}
	\arres{MR}{Me}{0}{0}{}
	\artr{Me}{ML}{0}{0}{}
	\artr{Me}{MD}{0}{0}{}
	\artr{Me}{MR}{0}{0}{}
	\arres{rMK}{rML}{0}{0}{$\Delta^r$}
	\arres{rMK}{rMD}{0}{0}{}
	\arres{rMK}{rMR}{0}{0}{}
	\artr{rML}{rMK}{0}{0}{$\nabla^r$}
	\artr{rMD}{rMK}{0}{0}{}
	\artr{rMR}{rMK}{0}{0}{}
	\draw[->,\resC,bend right=2ex] (rML) to node[sloped, anchor=center, xshift={0pt},yshift={-5pt}] {\scalebox{0.7}{$\text{id}\otimes r\cdot \res{L}{e}$}} (rMe);
	\arres{rMD}{rMe}{-5}{2}{$\Delta^d$}
	\arres{rMR}{rMe}{0}{0}{$\Delta^r$}		
	\draw[->,\trC,bend right=2ex] (rMe) to node[sloped, anchor=center, xshift={0pt},yshift={5pt}] {\scalebox{0.7}{$\text{id}\otimes r\cdot \tr{L}{e}$}} (rML);
	\artr{rMe}{rMD}{0}{0}{$\nabla^d$}
	\artr{rMe}{rMR}{0}{0}{$\nabla^r$}
	\draw[->] (MK) to node[above, xshift={-1pt}] {\scalebox{0.7}{$\res{\K}{L}$}} (rMK);
	\draw[->] (MR) to node[above, xshift={-1pt}] {\scalebox{0.7}{$\begin{pmatrix}i_L \\ \res{D}{e} \\ \res{R}{e} \end{pmatrix}$}} (rML);
	\draw[->] (Me) to node[above, xshift={-1pt}] {\scalebox{0.7}{$i_L$}} (rMe);
\end{tikzpicture}
\end{equation*}
Again, \(W_\K(D)\) acts on \(M_e\) via the \(C_2\)-action \(W_{C_2}(e)\circlearrowright M_e\). The same applies for \(W_\K(R)\circlearrowright M_e\). Note we have used \cref{shearingiso} to rewrite the bottom group in \(\uparrow^\K_L \downarrow^\K_L \ul M\). We now have a diagonal action \(\K/L\) on \(\Z[\K/L]\otimes M_L\). Similarly, \(\K/e\) acts diagonally on \(\Z[\K/L]\otimes M_e\).

\begin{exmp}
For \(\ul M=\ulZ\), $\uparrow_L^K \ulZ$ and $\ulZ \rightarrow \uparrow_L^K \ulZ$ are as follows.
\begin{equation*}
\begin{tikzpicture}
	\node (MK) at (0,4) {$\Z$};
	\node (ML) at (-2,2) {$\Z$};
	\node (MD) at (0,2) {$\Z$};
	\node (MR) at (2,2) {$\Z$};	
	\node (Me) at (0,0) {$\Z$};
	\node (rMK) at (7,4) {$\Z$};
	\node (rML) at (5,2) {$\Z[\K/L] \otimes \Z$};
	\node (rMD) at (7,2) {$\Z$};
	\node (rMR) at (9,2) {$\Z$};
	\node (rMe) at (7,0) {$\Z[\K/L] \otimes \Z$};
	% maps
	\draw[->,blue,bend right=2ex] (MK) to (ML);
	\draw[->,blue,bend right=2ex] (MK) to (MD);
	\draw[->,blue,bend right=2ex] (MK) to (MR);
	\draw[->,orange,bend right=2ex] (ML) to (MK);
	\draw[->,orange,bend right=2ex] (MD) to (MK);
	\draw[->,orange,bend right=2ex] (MR) to (MK);
	\draw[->,blue,bend right=2ex] (ML) to (Me);
	\draw[->,blue,bend right=2ex] (MD) to (Me);
	\draw[->,blue,bend right=2ex] (MR) to (Me);
	\draw[->,orange,bend right=2ex] (Me) to (ML);
	\draw[->,orange,bend right=2ex] (Me) to (MD);
	\draw[->,orange,bend right=2ex] (Me) to (MR);
	\arres{rMK}{rML}{0}{0}{$\Delta^r$}
	\draw[->,blue,bend right=2ex] (rMK) to (rMD);
	\draw[->,blue,bend right=2ex] (rMK) to (rMR);
	\artr{rML}{rMK}{0}{0}{$\nabla^r$}
	\draw[->,orange,bend right=2ex] (rMD) to (rMK);
	\draw[->,orange,bend right=2ex] (rMR) to (rMK);
	\draw[->,blue,bend right=2ex] (rML) to node[sloped, anchor=center, xshift={0pt},yshift={-5pt}] {\scalebox{0.7}{$\text{id}\otimes r\cdot 1$}} (rMe);
	\arres{rMD}{rMe}{-5}{2}{$\Delta^d$}
	\arres{rMR}{rMe}{0}{0}{$\Delta^r$}		
	\draw[->,orange,bend right=2ex] (rMe) to node[sloped, anchor=center, xshift={0pt},yshift={5pt}] {\scalebox{0.7}{$\text{id}\otimes r\cdot 2$}} (rML);
	\artr{rMe}{rMD}{0}{0}{$\nabla^d$}
	\artr{rMe}{rMR}{0}{0}{$\nabla^r$}
	\draw[->] (MK) to node[above, xshift={-1pt}] {\scalebox{0.7}{$1$}} (rMK);
	\draw[->] (MR) to node[above, xshift={-1pt}] {\scalebox{0.7}{$\begin{pmatrix}i_L \\ 1 \\ 1 \end{pmatrix}$}} (rML);
	\draw[->] (Me) to node[above, xshift={-1pt}] {\scalebox{0.7}{$i_L$}} (rMe);
\end{tikzpicture}
\end{equation*}
Again, blue denotes multiplication by one and orange multiplication by two.
\end{exmp}

%%%%%%%%%%%%%%%%%%%%%%%%%%%%%%%%%%%%%
\section{2-slice Characterization \hspace{2in}} \label{sec:2slice}
%%%%%%%%%%%%%%%%%%%%%%%%%%%%%%%%%%%%%

Consider a \(\K\)-spectrum \(X\). Then by \cref{rho:susp:commutes},
\begin{align*}
    P^n_n  X &\simeq \Si{\frac{n-r}{4}\rho} P^r_r \left( \Si{-\frac{n-r}{4}\rho} X \right)
\end{align*}
where \(n\equiv r\pmod 4\) and \(0\leq r\leq 3\). Thus, to know the slices of \(X\), we need only know the 0-, 1-, 2-, and 3-slices of certain suspensions of \(X\). \cref{charac:slice:01-1} and \cref{rho:susp:commutes} characterize the 0-, 1-, and 3-slices. We now characterize the 2-slices.

\begin{prop} \label{Homotopy_Reduction}
Let \(G\) be a finite group, \(H\) an index two subgroup of \(G\), and \(\sigma^H\) the sign representation from \(G\rightarrow G/H\). 
For a \(G\)-spectrum \(X\), if \(\downarrow^G_H \ul \pi_{n+1}(X) = \ul 0 = \downarrow^G_H \ul \pi_n(X)\), the natural map \(\Si{-\sigma^H} X \rightarrow X\) induces an isomorphism on \(\ul\pi_{n}\). In particular, if \(\pi_{n+1}^H(X) = 0 = \pi_n^G(X)\), then \(\pi_n^G(\Si{-\sigma^H} X) = 0\). 
\end{prop}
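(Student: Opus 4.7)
The plan is to exploit the standard cofiber sequence associated to the sign representation. Because \(H\) has index two in \(G\), the pulled-back sign \(\sigma^H\) fits into a cofiber sequence
\begin{equation*}
    S^{-\sigma^H} \longrightarrow S^0 \longrightarrow G/H_+
\end{equation*}
of \(G\)-spectra (the desuspension of the Euler class cofiber sequence). Smashing with \(X\) produces
\begin{equation*}
    \Si{-\sigma^H} X \longrightarrow X \longrightarrow G/H_+ \wedge X,
\end{equation*}
in which the first map is the natural map of the statement. Applying homotopy Mackey functors yields the long exact sequence
\begin{equation*}
    \ul\pi_{n+1}(G/H_+ \wedge X) \longrightarrow \ul\pi_n(\Si{-\sigma^H} X) \longrightarrow \ul\pi_n(X) \longrightarrow \ul\pi_n(G/H_+ \wedge X).
\end{equation*}

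The key step is to show that the outer two terms vanish. For any subgroup \(K\leq G\), normality of \(H\) (automatic since \([G:H]=2\)) makes every conjugate \(gHg^{-1}\) equal to \(H\), so the double-coset fixed-point decomposition for \(G/H_+ \wedge X\) collapses to
\begin{equation*}
    \pi_k^K(G/H_+ \wedge X) \;\cong\; \bigoplus_{[g]\in K\backslash G/H} \pi_k^{K\cap H}(X),
\end{equation*}
i.e.\ \(\ul\pi_k(G/H_+\wedge X)\cong\, \uparrow_H^G \downarrow_H^G \ul\pi_k(X)\). Since each \(K\cap H\leq H\), the hypothesis \(\downarrow_H^G \ul\pi_k(X)=\ul 0\) for \(k\in\{n,n+1\}\) forces every summand to vanish, so \(\ul\pi_k(G/H_+\wedge X)=\ul 0\) for those \(k\). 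The long exact sequence then delivers the asserted isomorphism \(\ul\pi_n(\Si{-\sigma^H} X)\xrightarrow{\cong} \ul\pi_n(X)\).

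The in-particular statement is obtained by applying \(G\)-fixed points (equivalently, reading off the \(G/G\) level of the Mackey-functor long exact sequence) and using the induction-restriction adjunction \(\pi_k^G(G/H_+\wedge X)\cong \pi_k^H(X)\). This produces the exact sequence
\begin{equation*}
    \pi^H_{n+1}(X) \longrightarrow \pi^G_n(\Si{-\sigma^H} X) \longrightarrow \pi^G_n(X),
\end{equation*}
whose outer groups vanish by assumption, forcing the middle term to vanish as well.

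I do not expect a serious obstacle: the argument is essentially one cofiber sequence plus its long exact sequence. The only point requiring care is the identification \(\ul\pi_k(G/H_+\wedge X)\cong\,\uparrow_H^G\downarrow_H^G \ul\pi_k(X)\), which is precisely where normality of \(H\) (and hence the restriction hypothesis at \(H\)) is exactly the right condition — any looser hypothesis on \(\ul\pi_k(X)\) would not suffice, while any stronger one would be overkill.
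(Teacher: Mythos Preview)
Your proof is correct and follows essentially the same approach as the paper: both use the cofiber sequence \(S^{-\sigma^H}\to S^0\to G/H_+\) smashed with \(X\), identify \(\ul\pi_k(G/H_+\wedge X)\cong\,\uparrow^G_H\downarrow^G_H\ul\pi_k(X)\), and read off the result from the long exact sequence. Your write-up is slightly more explicit about the double-coset identification, but the argument is the same.
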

\begin{proof}
Because \(\sigma^H\) is one-dimensional we may construct \(\Si{-\sigma^H} X\) with the cofiber sequence \((S^{-\sigma^H} \rightarrow S^0 \rightarrow G/H_+) \wedge X\). The resulting long exact sequence in homotopy is
\begin{align} \label{les:reduction}
	\uparrow^G_H \downarrow^G_H \ul\pi_{n+1}(X) \rightarrow \ul\pi_n(\Si{-\sigma^H} X) \rightarrow \ul\pi_n(X) \rightarrow \uparrow^G_H \downarrow^G_H \ul\pi_{n}(X).
\end{align}
As \(\uparrow^G_H \downarrow^G_H \ul\pi_{n+1}(X) = \ul 0 = \uparrow^G_H \downarrow^G_H \ul \pi_n(X)\),
we find that \(\ul\pi_n(\Si{-\sigma^H} X) \cong \ul\pi_n(X)\).

Now suppose that \(\pi_{n+1}^H(X) = 0 = \pi_n^G(X)\). Because \(\pi^H_{n+1}(X)\) is the value of \(\uparrow^G_H \downarrow^G_H \ul\pi_{n+1}(X)\) at the orbit \(G/H\), the left three terms of \cref{les:reduction} prove the exact sequence
\begin{align*}
    0 \rightarrow \pi_n^G(\Si{-\sigma^H} X) \rightarrow 0.
\end{align*}
Consequently, \(\pi_n^G(\Si{-\sigma^H} X) = 0\).
\end{proof}

\begin{cor} \label{EM:homotopy:reduction}
Let \(H\ul M\) be an  Eilenberg-MacLane \(G\)-spectrum  and \(V \cong \mathbf{s} \oplus \bigoplus_{i=1}^r \sigma^{H_i}\) be a real representation with \(s\) copies of the trivial representation and each \(\sigma^{H_i}\) the sign representation from \(G\rightarrow G/H_i\), where \(H_i\) is an index two subgroup of \(G\). Then \(\Si{-V} H\ul M\) does not have nontrivial homotopy above degree \(-s\).
\end{cor}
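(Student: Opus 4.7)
The plan is to prove this by induction on $r$, the number of sign representation summands of $V$, with \cref{Homotopy_Reduction} serving as the engine for the inductive step.

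For the base case $r = 0$, we have $V = \mathbf{s}$, so $\Sigma^{-V} H\underline{M} = \Sigma^{-s} H\underline{M}$, whose only nontrivial homotopy Mackey functor is $\underline{\pi}_{-s} = \underline{M}$. In particular there is no nontrivial homotopy in degrees $> -s$.

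For the inductive step, suppose the claim has been established for $V' := \mathbf{s} \oplus \bigoplus_{i=1}^{r-1} \sigma^{H_i}$, and write $V = V' \oplus \sigma^{H_r}$ so that $\Sigma^{-V} H\underline{M} \simeq \Sigma^{-\sigma^{H_r}} X$ where $X := \Sigma^{-V'} H\underline{M}$. By the inductive hypothesis, $\underline{\pi}_n(X) = \underline{0}$ for every $n > -s$. Fix any $n > -s$; then $n+1 > -s$ as well, so both $\underline{\pi}_{n+1}(X)$ and $\underline{\pi}_n(X)$ vanish, and in particular their restrictions to $H_r$ are trivial. \cref{Homotopy_Reduction} (applied with $G$, $H = H_r$, and the sign representation $\sigma^{H_r}$) then yields an isomorphism $\underline{\pi}_n(\Sigma^{-\sigma^{H_r}} X) \cong \underline{\pi}_n(X) = \underline{0}$. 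This completes the induction.

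There is no real obstacle here beyond bookkeeping: the inductive hypothesis makes the hypothesis of \cref{Homotopy_Reduction} hold automatically in every degree $n > -s$, because both $\underline{\pi}_n(X)$ and $\underline{\pi}_{n+1}(X)$ are already zero as Mackey functors (not merely after restriction). The only thing to be careful about is ensuring that the sign-representation summands can be peeled off one at a time in a consistent order; since all the $\sigma^{H_i}$ are one-dimensional and smashing with their representation spheres commutes, the order is irrelevant, and the induction goes through as above.
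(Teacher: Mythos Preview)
Your proof is correct and follows essentially the same approach as the paper: both argue by induction on the number of sign-representation summands, using \cref{Homotopy_Reduction} (or equivalently the long exact sequence in its proof) at each step. The only cosmetic difference is that the paper first strips off all the sign representations to show $X_r$ has no homotopy above degree $0$ and then applies the $-s$ shift at the end, whereas you carry the trivial summands along from the start; this is immaterial.
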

\begin{proof}
First apply \cref{les:reduction} to \(X_1 := \Si{-\sigma^{H_1}} H\ul M\) with \(n\geq 1\). Then repeat for \mbox{\(X_i:= \Si{-\sigma^{H_i}} X_{i-1}\)} where \(2\leq i\leq r\). Then \(\Si{-V} H\ul M \simeq \Si{-s} X_{r}\) has no homotopy above degree \(-s\).
\end{proof}

Recall from \cref{subsec:signrep} that \(\alpha\), \(\beta\), and \(\gamma\) are the sign representations of \(\K\).

\begin{lem} \label{homotopy:-beta:M}
Let \(\ul M\) be a \(\K\)-Mackey functor. The nontrivial homotopy of \(\Si{-\beta} H\ul M\) is
\begin{center}
\begin{tikzpicture}[scale=0.8, every node/.style={scale=0.75}, bend right=2ex]
	\node (pi0) at (-8,2) {$\ul \pi_0 \quad =$};
	\node (lMK) at (-5,4) {$\ker \res{\K}{L}$};
	\node (lML) at (-7,2) {$0$};
	\node (lMD) at (-5,2) {$\ker \res{D}{e}$};
	\node (lMR) at (-3,2) {$\ker \res{R}{e}$};
	\node (lMe) at (-5,0) {$0$};
	%
	%\node (and) at (0,2) {and};
	%
	\begin{scope}[xshift=-3cm]
	\node (pi-1) at (3,2) {$\ul \pi_{-1} \quad =$};
	\node (MK) at (7,4) {$M_L / \im \res{\K}{L}$};
	\node (ML) at (5,2) {$\Z_\sigma^L\otimes M_L$};
	\node (MD) at (7,2) {$M_e / \im \res{D}{e}$};
	\node (MR) at (9,2) {$M_e / \im \res{R}{e}$};
	\node (Me) at (7,0) {$\Z_\sigma^L\otimes M_e$};
	\end{scope}
	% left arrows
	\draw[->,\resC] (lMK) to (lMR);
	\draw[->,\trC] (lMR) to (lMK);
	\draw[->,\resC] (lMK) to (lMD);
	\draw[->,\trC] (lMD) to (lMK);
	% right arrows
	\arres{MK}{ML}{-2}{0}{$\varphi_r^{\K L}$}
	\draw[->,\resC] (MK) to (MD);
	\draw[->,\resC] (MK) to (MR);
	\artr{ML}{MK}{0}{0}{$\pi$}
	\draw[->,\trC] (MD) to (MK);
	\draw[->,\trC] (MR) to (MK);
	\draw[->,\resC] (ML) to node[sloped,anchor=center,xshift={0pt},yshift={-5pt}] {\scalebox{0.7}{$1\otimes r\cdot \res{L}{e}$}} (Me);
	\draw[->,\trC] (Me) to node[sloped,anchor=center,xshift={0pt},yshift={5pt}] {\scalebox{0.7}{$1\otimes r\cdot \tr{L}{e}$}} (ML);
	\arres{MD}{Me}{-6}{0}{$\varphi_r^{De}$}
	\arres{MR}{Me}{1}{2}{$\varphi_r^{Re}$}
	\artr{Me}{MD}{0}{0}{$\pi$}
	\artr{Me}{MR}{0}{0}{$\pi$}
\end{tikzpicture}	
\end{center}
Here, \(\varphi_r^{\K L}\) is induced by \(\Delta^r\) in the square
\begin{equation*}
\begin{tikzcd}
    M_L \arrow[r, "\pi"] \arrow[d, "\Delta^r"] & M_L/ \im \res{\K}{L} \arrow[d, "\varphi_r^{\K L}"] \\
    \Z[\K/L] \otimes M_L \arrow[r, "q_L"] & \Z_\sigma^L\otimes M_L
\end{tikzcd}	    
\end{equation*}
and is given by \(\varphi_r^{KL}(m) = m-r\cdot m\). The maps \(\varphi_r^{De}:M_e/\im \res{D}{e}\rightarrow \Z_\sigma^L\otimes M_e\) and \(\varphi_r^{Re}:M_e/\im \res{R}{e}\rightarrow \Z_\sigma^L\otimes M_e\) are defined similarly.
\end{lem}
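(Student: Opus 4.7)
The plan is to realize \(\Si{-\beta} H\ul M\) as the fiber of a map between Eilenberg--MacLane spectra and read off the homotopy Mackey functors from the long exact sequence. Since \(\beta\) is pulled back along \(\K \to \K/L \cong C_2\) (so \(L\) is its kernel) and restricts trivially to \(L\), the same reduction used in \cref{Homotopy_Reduction} gives the cofiber sequence \(S^{-\beta} \to S^0 \to \K/L_+\). Smashing with \(H\ul M\) and identifying \(\K/L_+ \wedge H\ul M \simeq H(\uparrow^\K_L \downarrow^\K_L \ul M)\) via the Wirthmuller isomorphism (which commutes with the Eilenberg--MacLane construction) yields
\[
\Si{-\beta} H\ul M \to H\ul M \xrightarrow{\eta} H\bigl(\uparrow^\K_L \downarrow^\K_L \ul M\bigr),
\]
where \(\eta\) is the unit of the restriction--induction adjunction described explicitly in \cref{Induction_Formula}.

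Both right-hand spectra have homotopy concentrated in degree zero, so the long exact sequence collapses to
\[
0 \to \ul\pi_0(\Si{-\beta} H\ul M) \to \ul M \xrightarrow{\eta} \uparrow^\K_L \downarrow^\K_L \ul M \to \ul\pi_{-1}(\Si{-\beta} H\ul M) \to 0,
\]
with all other \(\ul\pi_n(\Si{-\beta} H\ul M) = 0\). Thus \(\ul\pi_0 = \ker\eta\) and \(\ul\pi_{-1} = \coker\eta\), reducing the lemma to a levelwise computation.

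At level \(\K\), \(\eta\) is \(\res{\K}{L}\), giving the stated kernel and cokernel verbatim; at levels \(D\) and \(R\), \(\eta\) is \(\res{D}{e}\) and \(\res{R}{e}\), matching analogously. The substantive cases are levels \(L\) and \(e\), where \(\eta\) is the map \(i_L\colon m \mapsto (1+r)\otimes m\) of \cref{Induction_Formula}. Injectivity of \(i_L\) is immediate by projecting onto the \(\Z\cdot 1\) summand of \(\Z[\K/L] \otimes M_{(-)}\). For the cokernel, I would use the surjection \(q_L \otimes \mathrm{id}\colon \Z[\K/L] \otimes M_{(-)} \to \Z_\sigma^L \otimes M_{(-)}\) determined by \(q_L(1) = 1\) and \(q_L(r) = -1\): the identity \(q_L(1+r)=0\) gives \(\im i_L \subseteq \ker(q_L \otimes \mathrm{id})\), and a rank count forces equality, producing the desired identifications \(\coker \eta_L \cong \Z_\sigma^L \otimes M_L\) and \(\coker \eta_e \cong \Z_\sigma^L \otimes M_e\).

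Finally, the Mackey-functor structure maps on \(\ul\pi_{-1}\) are obtained by post-composing the structure maps of \(\uparrow^\K_L \downarrow^\K_L \ul M\) (which are explicit in \cref{Induction_Formula}) with \(q_L \otimes \mathrm{id}\); for instance \((q_L \otimes \mathrm{id})\Delta^r(m) = 1\otimes m - 1\otimes rm = 1\otimes(m-rm)\) produces \(\varphi_r^{\K L}(m) = m-rm\) under \(\Z_\sigma^L \otimes M_L \cong M_L\), and parallel calculations with \(\Delta^d\) and \(\Delta^r\) on the \(D\)-\(e\) and \(R\)-\(e\) edges yield \(\varphi_r^{De}\) and \(\varphi_r^{Re}\). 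The main bookkeeping obstacle is verifying that each induced map descends to the relevant quotient: for \(\varphi_r^{\K L}\) this reduces to the Mackey-axiom observation that the Weyl action of \(\K/L\) is trivial on \(\im \res{\K}{L}\), and analogous triviality statements on the image of restriction from \(D\) and \(R\) handle \(\varphi_r^{De}\) and \(\varphi_r^{Re}\).
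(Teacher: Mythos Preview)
Your proof is correct and follows exactly the paper's approach: the paper's proof simply invokes the cofiber sequence \((S^{-\beta} \to S^0 \to \K/L_+)\wedge H\ul M\) and refers to the explicit description of the unit map \(\ul M \to \uparrow^\K_L\downarrow^\K_L \ul M\) in \cref{Induction_Formula}, which is precisely what you have unpacked. One small remark: the phrase ``a rank count forces equality'' is imprecise for a general abelian group \(M\); better to note directly that \(q_L(1\otimes a + r\otimes b) = a - b\) vanishes iff \(a=b\), so \(\ker(q_L\otimes\mathrm{id}) = \im i_L\) on the nose.
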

\begin{proof}
We have the cofiber sequence \((S^{-\beta} \rightarrow S^0 \rightarrow \K/L_+)\wedge H\ul M\). The result then follows from the description of the map \(\ul M \rightarrow \uparrow^\K_L\downarrow^\K_L \ul M\) given in \cref{Induction_Formula}.
\end{proof}

\begin{prop} \label{homotopy:-rho:M}
Let \(\ul M\) be a \(\K\)-Mackey functor. The nontrivial homotopy of \(\Si{-\alpha-\beta-\gamma} H\ul M\) is

\begin{tikzpicture}[scale=0.8, every node/.style={scale=0.75}]
    \node (pi0) at (-9,2) {$\ul \pi_0 \quad =$};
    \node (lMK) at (-5,4) {$\ker (\res{\K}{L} + \res{\K}{D} + \res{\K}{R})$};
    \node (lML) at (-7,2) {$0$};
    \node (lMD) at (-5,2) {$0$};
    \node (lMR) at (-3,2) {$0$};
    \node (lMe) at (-5,0) {$0$};
    %
    %\node (and) at (0,2) {,};
    %
    %
    \begin{scope}[xshift=-3cm]
    \node (pi-1) at (3,2) {$\ul \pi_{-1} \quad =$};
    \node (MK) at (7,4) {$E_2$};
    \node (ML) at (5,2) {\scalebox{0.9}{$\begin{aligned}
    		\Z_\sigma^L &\otimes \\ \ker & \res{L}{e}
    	\end{aligned}$}};
    \node (MD) at (7,2) {\scalebox{0.9}{$\begin{aligned}
    		\Z_\sigma^D &\otimes \\ \ker & \res{D}{e}
    	\end{aligned}$}};
    \node (MR) at (9,2) {\scalebox{0.9}{$\begin{aligned}
    		\Z_\sigma^R &\otimes \\ \ker & \res{R}{e}
    	\end{aligned}$}};
    \node (Me) at (7,0) {$0$};	
    \end{scope}
    \arres{MK}{ML}{0}{0}{}
    \arres{MK}{MD}{0}{0}{}
    \arres{MK}{MR}{0}{0}{}
    \artr{ML}{MK}{0}{0}{}
    \artr{MD}{MK}{0}{0}{}
    \artr{MR}{MK}{0}{0}{}
\end{tikzpicture}

\begin{tikzpicture}[scale=0.8, every node/.style={scale=0.75}]
    \node (pi-2) at (-9,2) {$\ul \pi_{-2} \quad =$};
    \node (lMK) at (-5,4) {$E_3$};
    \node (lML) at (-7,2) {\scalebox{0.9}{$\begin{aligned}
    		\Z_\sigma^L \otimes& \\  \ker\varphi_r^{Le}
    	\end{aligned}$}};
    \node (lMD) at (-5,2) {\scalebox{0.9}{$\begin{aligned}
    		\Z_\sigma^D \otimes& \\ \ker \varphi_r^{De}
    	\end{aligned}$}};
    \node (lMR) at (-3,2) {\scalebox{0.9}{$\begin{aligned}
    		\Z_\sigma^R \otimes& \\ \ker\varphi_r^{Re}
    	\end{aligned}$}};
    \node (lMe) at (-5,0) {$0$};
    %
    % left maps
    \arres{lMK}{lML}{0}{0}{}
    \arres{lMK}{lMD}{0}{0}{}
    \arres{lMK}{lMR}{0}{0}{}
    \artr{lML}{lMK}{0}{0}{}
    \artr{lMD}{lMK}{0}{0}{}
    \artr{lMR}{lMK}{0}{0}{}
    %
    %\node (and) at (0,2) {,};
    %
    \begin{scope}[xshift=-3cm]
    \node (pi-3) at (3,2) {$\ul \pi_{-3} \quad =$};
    \node (MK) at (7,4) {$(M_e / \im \varphi_r^{De}) / \im \varphi_r^{R,L}$};
    \node (ML) at (5,2) {$M_e / \im \varphi_d^{Le}$};
    \node (MD) at (7,2) {$M_e / \im \varphi_r^{De}$};
    \node (MR) at (9,2) {$M_e / \im \varphi_r^{Re}$};
    \node (Me) at (7,0) {$M_e$};	
    \end{scope}
    
    %
    % right maps
    \arres{MK}{ML}{0}{0}{$\varphi_d$}
    \arres{MK}{MD}{-5}{-10}{$\varphi_l$}
    \arres{MK}{MR}{0}{-14}{$\varphi_d$}
    \artr{ML}{MK}{0}{0}{$\pi$}
    \artr{MD}{MK}{0}{0}{$\pi$}
    \artr{MR}{MK}{0}{0}{$\pi$}
    \arres{ML}{Me}{-15}{-5}{$\varphi_{l}^{dLe}$}
    \arres{MD}{Me}{-10}{5}{$\varphi_{d}^{rDe}$}
    \arres{MR}{Me}{0}{0}{$\varphi_{l}^{rRe}$}
    \artr{Me}{ML}{0}{0}{$\pi$}
    \artr{Me}{MD}{0}{0}{$\pi$}
    \artr{Me}{MR}{0}{0}{$\pi$}
\end{tikzpicture}

where \(E_1\), \(E_2\), and \(E_3\) are extensions
\begin{equation*}
\begin{tikzcd}[row sep=small]
    M_e/\im \varphi_r^{De} \arrow[r] & E_3 \arrow[r] & (M_e/\im \res{R}{e})/\im \res{L}{e} \\
    \ker\res{D}{e}  \arrow[r] & E_2 \arrow[r] & E_1 \\
    \ker\res{R}{e} \arrow[r] & E_1 \arrow[r] & M_L/\im \res{\K}{L}
\end{tikzcd}
\end{equation*}
Let \(\varphi_h^{*}:\overline{M_e} \rightarrow \overline{M_e}'\) be one of the maps shown. Then \(\varphi_h^{*}(m) = m-h\cdot m\). Additionally, \((M_e/\im \varphi_r^{De})/\im \varphi_r^{R,L}\) is the cokernel of the map
\begin{equation*}
\begin{tikzcd}
    (M_e/\im \res{R}{e})/\im \res{L}{e} \arrow[r, "\varphi_r^{R,L}"] & M_e/\im \varphi_r^{De}.
\end{tikzcd}
\end{equation*}
\end{prop}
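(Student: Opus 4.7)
My plan is to iterate the one-fold desuspension computation from \cref{homotopy:-beta:M} three times, once for each sign representation, using the cofiber sequence \((S^{-\sigma^H}\to S^0\to G/H_+)\wedge X\) at each stage. Concretely, I would first apply \cref{homotopy:-beta:M} directly to \(H\ul M\) to get \(\ul\pi_*(\Si{-\beta}H\ul M)\) concentrated in degrees \(0\) and \(-1\). Then I would smash the cofiber sequence \((S^{-\gamma}\to S^0\to \K/D_+)\wedge(-)\) with \(\Si{-\beta}H\ul M\) to produce a long exact sequence in homotopy, which computes \(\ul\pi_*(\Si{-\beta-\gamma}H\ul M)\) in degrees \(0,-1,-2\). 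Finally, smashing \((S^{-\alpha}\to S^0\to \K/R_+)\wedge(-)\) with \(\Si{-\beta-\gamma}H\ul M\) yields the desired four-term answer.

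At each stage, the pieces fall out as follows. The connecting map in the long exact sequence is (up to the shearing isomorphism of \cref{shearingiso}) the map \(\ul N\to \uparrow^{\K}_H\downarrow^{\K}_H\ul N\) whose explicit formula is given in \cref{Induction_Formula}; its kernel contributes to the new top homotopy group and its cokernel contributes to the bottom. For example, in degree \(0\) one successively cuts down \(\ker\res{\K}{L}\) by intersecting with \(\ker\res{\K}{D}\) and then \(\ker\res{\K}{R}\), which is exactly \(\ker(\res{\K}{L}+\res{\K}{D}+\res{\K}{R})\). The quotient-style entries at \(D,R,L\) in level \(\ul\pi_{-3}\) arise iteratively as cokernels of the \(\varphi_h^*\)-maps introduced in \cref{homotopy:-beta:M}, and the doubled quotient \((M_e/\!\im\varphi_r^{De})/\!\im\varphi_r^{R,L}\) at the top of \(\ul\pi_{-3}\) is precisely the cokernel left over after all three desuspensions have been applied at the bottom. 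The Weyl actions update according to the formulas in \cref{Induction_Formula}, accounting for the \(\Z_\sigma^H\) twists.

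The short exact sequences defining \(E_1, E_2, E_3\) in \(\ul\pi_{-1}\) and \(\ul\pi_{-2}\) emerge as the standard "kernel--image--cokernel" splittings of the long exact sequences at mixed degrees. Specifically, after the \(\gamma\)-desuspension, the remnant of the old \(\ul\pi_{-1}\) of \(\Si{-\beta}H\ul M\) combines with the new contributions from \(\ker\res{D}{e}\) to give the extension \(E_1\); after the \(\alpha\)-desuspension, a similar bookkeeping yields \(E_2\) and \(E_3\). To justify these extensions rigorously, I would identify the connecting maps in each long exact sequence explicitly using the description of the unit \(\ul N\to \uparrow^{\K}_H\downarrow^{\K}_H\ul N\) in \cref{Induction_Formula}, then read off the short exact sequences term by term.

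The main obstacle is the bookkeeping: there are a dozen Mackey functor entries, each carrying a Weyl-group action, and the three iterations produce a web of maps \(\varphi_h^*\) that must be correctly identified with the formula \(m\mapsto m - h\cdot m\) and distinguished by the subgroup whose fixed points they live on. I would also have to verify, using \cref{EM:homotopy:reduction} with \(s=0\), that there is no nontrivial positive-degree homotopy contaminating the long exact sequences, so that the extensions really take the stated form without any spillover from above.
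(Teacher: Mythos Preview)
Your approach is essentially the same as the paper's: iterate the one-dimensional desuspension from \cref{homotopy:-beta:M} three times via the cofiber sequences \((S^{-\sigma^H}\to S^0\to \K/H_+)\wedge X\), reading off kernels and cokernels of the unit map from \cref{Induction_Formula}. The only difference is the order of the iterations---the paper does \(\beta\), then \(\alpha\), then \(\gamma\) (and displays the intermediate homotopy of \(\Si{-\alpha-\beta}H\ul M\) explicitly), whereas you propose \(\beta\), then \(\gamma\), then \(\alpha\); be aware that the specific extensions \(E_1,E_2,E_3\) as labeled in the statement come from the paper's ordering, so with your order the intermediate extensions will look different (e.g.\ your first extension involves \(\ker\res{D}{e}\) rather than \(\ker\res{R}{e}\)) and you would need a final identification step.
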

\begin{proof}
\cref{homotopy:-beta:M} supplies the homotopy for \(\Si{-\beta} H\ul M\). We continue constructing \(\Si{-\alpha-\beta-\gamma} H\ul M\) iteratively. The cofiber sequence
\begin{align*}
    (S^{-\alpha} \rightarrow S^0 \rightarrow \K/R_+) \wedge \Si{-\beta} H\ul M
\end{align*}
results in the homotopy of \(\Si{-\alpha-\beta} H\ul M\)

\begin{tikzpicture}[scale=0.8, every node/.style={scale=0.75}]
    \node (pi0) at (-9,2) {$\ul \pi_0 \quad =$};
    \node (lMK) at (-5,4) {$\ker (\res{\K}{L} + \res{\K}{R})$};
    \node (lML) at (-7,2) {$0$};
    \node (lMD) at (-5,2) {$\ker \res{\K}{D}$};
    \node (lMR) at (-3,2) {$0$};
    \node (lMe) at (-5,0) {$0$};
    \arres{lMK}{lMD}{0}{0}{}
    \artr{lMD}{lMK}{0}{0}{}
    %
    %
    %\node (and) at (0,2) {,};
    \begin{scope}[xshift=-3cm]
    \node (pi-1) at (3,2) {$\ul \pi_{-1} \quad =$};
    \node (MK) at (7,4) {$E_1$};
    \node (ML) at (5,2) {\scalebox{0.9}{$\begin{aligned}
    			\Z_\sigma^L &\otimes \\ \ker & \res{L}{e}
    		\end{aligned}$}};
    \node (MD) at (7,2) {\scalebox{0.9}{$\ker\varphi_r^{De}$}};
    \node (MR) at (9,2) {\scalebox{0.9}{$\begin{aligned}
    			\Z_\sigma^R &\otimes \\ \ker & \res{R}{e}
    		\end{aligned}$}};
    \node (Me) at (7,0) {$0$};	
    \end{scope}
    \arres{MK}{ML}{0}{0}{}
    \arres{MK}{MD}{0}{0}{}
    \arres{MK}{MR}{0}{0}{}
    \artr{ML}{MK}{0}{0}{}
    \artr{MD}{MK}{0}{0}{}
    \artr{MR}{MK}{0}{0}{}
\end{tikzpicture}

\begin{tikzpicture}[scale=0.8, every node/.style={scale=0.75}]
    \node (pi-2) at (-9,2) {$\ul \pi_{-2} \quad =$};
    \node (lMK) at (-5,4) {$(M_e/\im \res{D}{e})/ \im \res{L}{e}$};
    \node (lML) at (-7,2) {\scalebox{0.9}{$\begin{aligned}
    			\Z_\sigma^L &\otimes \\  M_e / &\im \res{L}{e}
    		\end{aligned}$}};
    \node (lMD) at (-5,2) {\scalebox{0.9}{$\begin{aligned}
    			\Z_\sigma^D &\otimes \\ M_e / &\im \varphi_r^{De}
    		\end{aligned}$}};
    \node (lMR) at (-3,2) {\scalebox{0.9}{$\begin{aligned}
    			\Z_\sigma^R &\otimes \\ M_e / &\im \res{R}{e}
    		\end{aligned}$}};
    \node (lMe) at (-5,0) {$\Z_\sigma^D \otimes M_e$};
    %
    % left maps
    \arres{lMK}{lML}{0}{0}{$\varphi_r^{D,L}$}
    \arres{lMK}{lMD}{-8}{-17}{$\varphi_r^{D,L}$}
    \arres{lMK}{lMR}{0}{-23}{$\varphi_d^{D,L}$}
    \artr{lML}{lMK}{-11}{0}{$\pi$}
    \artr{lMD}{lMK}{0}{0}{$\pi$}
    \artr{lMR}{lMK}{0}{0}{$\pi$}
    \arres{lML}{lMe}{-10}{-8}{$\varphi_d^{Le}$}
    \arres{lMD}{lMe}{-6}{3}{$\varphi_d^{rDe}$}
    \draw[->,\resC,bend right=2ex] (lMR) to node[sloped, anchor=center, xshift={0pt},yshift={-5pt}] {\scalebox{0.7}{$1\otimes \varphi_r^{Re}$}} (lMe);
    \artr{lMe}{lML}{0}{0}{$\pi$}
    \artr{lMe}{lMD}{-2}{0}{$\pi$}		
    \draw[->,\trC,bend right=2ex] (lMe) to node[sloped, anchor=center, xshift={0pt},yshift={-5pt}] {\scalebox{0.7}{$1\otimes \pi$}} (lMR);	
\end{tikzpicture}	

Finally, the cofiber sequence \((S^{-\gamma} \rightarrow S^0 \rightarrow \K/D_+) \wedge \Si{-\alpha-\beta} H\ul M\) provides the result.
\end{proof}

\begin{lem} \label{Kernel_Equivalences}
Let \(\phi:G\rightarrow N\) and \(\psi:G\rightarrow M\) be group homomorphisms. Then 
\begin{align*}
	\ker(\phi, \psi) = \ker(\phi) \cap \ker(\psi) = \ker( \phi\vert_{\ker(\psi)})
\end{align*}
where \((\phi,\psi):G\rightarrow N\oplus M\) is defined by \(g\mapsto \phi(g)\oplus \psi(g)\).
\end{lem}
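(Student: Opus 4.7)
The plan is to verify the two equalities by directly unwinding the definitions, since this lemma is essentially a set-theoretic observation about joint kernels and restrictions of homomorphisms.

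For the first equality \(\ker(\phi,\psi) = \ker(\phi)\cap\ker(\psi)\), I would simply observe that the identity element of \(N\oplus M\) is \(0_N \oplus 0_M\), so \((\phi,\psi)(g) = 0\) if and only if \(\phi(g)\oplus\psi(g) = 0_N \oplus 0_M\), which by the definition of the direct sum occurs precisely when \(\phi(g) = 0_N\) and \(\psi(g) = 0_M\) simultaneously. This is exactly the condition \(g \in \ker(\phi) \cap \ker(\psi)\).

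For the second equality \(\ker(\phi)\cap\ker(\psi) = \ker(\phi|_{\ker(\psi)})\), I would note that the restriction \(\phi|_{\ker(\psi)}\) is a homomorphism whose domain is \(\ker(\psi)\). Its kernel by definition is \(\{g \in \ker(\psi) : \phi(g) = 0_N\}\), which is exactly the set of elements lying in both \(\ker(\psi)\) and \(\ker(\phi)\).

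The argument is entirely formal, so there is no real obstacle; the only care needed is to be explicit that the restricted homomorphism has its domain narrowed to \(\ker(\psi)\) so that its kernel is automatically a subset of \(\ker(\psi)\). The chain of equalities then follows by combining the two observations above.
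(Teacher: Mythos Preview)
Your proposal is correct. The paper does not provide a proof of this lemma at all, treating it as self-evident; your direct unwinding of the definitions is exactly the expected verification and there is nothing further to add.
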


\begin{prop} \label{2slice:equivalences:si2}
Let \(\ul M\) be a \(\K\) Mackey functor where the restrictions \(\res{L}{e}\), \(\res{D}{e}\), and \(\res{R}{e}\) are injective. The following are equivalent.
\begin{enumerate}[(A)]
    \item The sequence
    \begin{align*}
        M_\K \xrightarrow{\res{\K}{L}+\res{\K}{D}+\res{\K}{R}} M_L \oplus M_D \oplus M_R \xrightarrow{\left(\begin{smallmatrix} -\res{L}{e} & \res{L}{e} & 0 \\ \res{D}{e} & 0 & -\res{D}{e} \\ 0 & -\res{R}{e} & \res{R}{e} \end{smallmatrix}\right)}  M_e^3
    \end{align*} 
    is exact. \label{2slice:si2:exact1}
    
    \item \(\im \res{\K}{H_1} = (\res{H_1}{e})^{-1}(\im \res{H_2}{e}) \cap (\res{H_1}{e})^{-1}(\im \res{H_3}{e})\) where \(H_1\), \(H_2\), \(H_3\) are distinct order two subgroups of \(\K\). \label{2slice:res:injective}
    This equality is represented by the diagram below.
    \begin{equation*}
    \begin{tikzcd}
        \im \res{K}{H_1} \arrow[d, hook] \ar[r] &  \im \res{H_2}{e} \cap \im \res{H_3}{e} \ar[d, hook] \\
        M_{H_1} \ar[r, "\res{H_1}{e}"] & M_e 
    \end{tikzcd}
    \end{equation*}
\end{enumerate}
\end{prop}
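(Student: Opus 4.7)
The plan is to first observe that the two maps in (A) compose to zero automatically from the Mackey functor axioms (the difference $\res{H}{e}\res{\K}{H} - \res{H'}{e}\res{\K}{H'}$ vanishes because both equal $\res{\K}{e}$), so exactness really means: the kernel of the second map equals the image of the first. Unpacking, the kernel consists of triples $(a,b,c) \in M_L \oplus M_D \oplus M_R$ such that $\res{L}{e}(a) = \res{D}{e}(b) = \res{R}{e}(c)$, while the image consists of triples arising from a single element of $M_\K$ via the three restrictions. So (A) is asserting that any compatible triple of restrictions can be assembled into a common preimage.

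For (A) $\Rightarrow$ (B), I would take $H_1$ to be any of $L, D, R$ (say $H_1 = L$, with $H_2 = D$, $H_3 = R$; the other two cases are symmetric). If $a \in M_L$ satisfies $\res{L}{e}(a) \in \im \res{D}{e} \cap \im \res{R}{e}$, then I can choose $b \in M_D$ and $c \in M_R$ with $\res{D}{e}(b) = \res{L}{e}(a) = \res{R}{e}(c)$; this gives a kernel element, and (A) produces an $m \in M_\K$ with $\res{\K}{L}(m) = a$. The reverse inclusion in (B) is immediate: if $a = \res{\K}{L}(m)$, then $\res{L}{e}(a) = \res{\K}{e}(m) = \res{D}{e}(\res{\K}{D}(m))$, and similarly for $R$.

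For (B) $\Rightarrow$ (A), given a kernel element $(a,b,c)$ with $\res{L}{e}(a) = \res{D}{e}(b) = \res{R}{e}(c)$, the hypothesis says $\res{L}{e}(a) \in \im \res{D}{e} \cap \im \res{R}{e}$, so (B) provides some $m \in M_\K$ with $\res{\K}{L}(m) = a$. The main obstacle is that a priori this $m$ only recovers $a$, not $b$ or $c$. This is where the injectivity of the restrictions to $e$ enters crucially: the element $\res{\K}{D}(m) - b \in M_D$ restricts to $\res{\K}{e}(m) - \res{D}{e}(b) = \res{L}{e}(a) - \res{L}{e}(a) = 0$ in $M_e$, so injectivity of $\res{D}{e}$ forces $\res{\K}{D}(m) = b$, and the same argument with $\res{R}{e}$ gives $\res{\K}{R}(m) = c$. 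Hence $(a,b,c)$ lies in the image of the first map.

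I expect (B) $\Rightarrow$ (A) to be the substantive direction, since it is the place where the full force of the injectivity hypothesis on $\res{H}{e}$ is needed; (A) $\Rightarrow$ (B) uses only that the matrix defines a chain complex plus the definition of the kernel. The whole argument is essentially a diagram chase once one notices that injectivity of the bottom restrictions lets a single preimage on one face propagate to preimages on all three.
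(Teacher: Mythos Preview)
Your proof is correct and follows essentially the same argument as the paper: both directions are the same diagram chase, using in (B) $\Rightarrow$ (A) that injectivity of $\res{D}{e}$ and $\res{R}{e}$ forces the single preimage $m$ obtained from (B) to also recover the $D$- and $R$-components of the kernel element. Your write-up is in fact a bit more explicit than the paper's in spelling out why $\res{\K}{D}(m) - b$ lies in $\ker \res{D}{e}$.
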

\begin{proof}
Without loss of generality, let \(H_1=L\), \(H_2=D\), and \(H_3=R\). For convenience, set \(I = (\res{L}{e})^{-1}(\im \res{D}{e}) \cap (\res{L}{e})^{-1}(\im \res{R}{e})\).

\cref{2slice:si2:exact1} \(\Rightarrow\) \cref{2slice:res:injective}:

Let \(x\in \im \res{\K}{L}\). Then we have \(k\in M_\K\) such that \(\res{\K}{L}(k) = x\). Additionally, \(\res{L}{e}(x) = \res{D}{e} \res{\K}{D} (k) = \res{R}{e} \res{\K}{R} (k)\). It follows that \(x\in I\).

Now suppose \(x\in I\). We then have \(y\in D\) and \(z\in R\) such that
\begin{align*}
    \res{L}{e}(x) = \res{D}{e}(y) = \res{R}{e}(z).
\end{align*}
Thus, \((x,y,z) \in \ker \left(\begin{smallmatrix} -\res{L}{e} & \res{L}{e} & 0 \\ \res{D}{e} & 0 & -\res{D}{e} \\ 0 & -\res{R}{e} & \res{R}{e} \end{smallmatrix}\right)\). Consequently, we have \(x = \res{\K}{L}(k)\) for some \(k\in M_\K\); that is, \(x\in \im \res{\K}{L}\).

\cref{2slice:res:injective} \(\Rightarrow\) \cref{2slice:si2:exact1}:

Let \((x,y,z)\in \ker \left(\begin{smallmatrix} -\res{L}{e} & \res{L}{e} & 0 \\ \res{D}{e} & 0 & -\res{D}{e} \\ 0 & -\res{R}{e} & \res{R}{e} \end{smallmatrix}\right)\). Then \(\res{L}{e}(x) = \res{D}{e}(y) = \res{R}{e}(z)\), so \(x\in \im \res{\K}{L}\). So there is some \(k\in M_\K\) such that \(x = \res{\K}{L}(k)\). Because \(\res{D}{e}\) and \(\res{R}{e}\) are injective, it must be that \(y = (\res{D}{e})^{-1}(\res{L}{e}(x))\) and \(z = (\res{R}{e})^{-1}(\res{L}{e}(x))\). Hence, \cref{2slice:si2:exact1} is exact.
\end{proof}

\begin{prop} \label{2slice:characterization:si1}
The \(\K\)-spectrum \(\Si{1} H\ul N\) is a 2-slice if and only if \(N_e = 0\) and \(N_\K \rightarrow N_L \oplus N_D \oplus N_R\) is injective.
\end{prop}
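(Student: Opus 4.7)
The plan is to verify both $X \geq 2$ and $X < 3$ directly for $X := \Si{1} H\ul N$.

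For necessity, suppose $X$ is a 2-slice. The condition $X \geq 2$ combined with \cref{thm:atleastn} at $H = e$ forces $N_e = \pi_1^e(X) = 0$. Moreover, $X < 3$ implies in particular that $[S^{\rho_\K}, X]^\K = \pi^\K_{\alpha+\beta+\gamma}(H\ul N) = 0$, which by \cref{homotopy:-rho:M} is precisely $\ker(N_\K \to N_L \oplus N_D \oplus N_R)$; hence that map is injective.

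For sufficiency, assume $N_e = 0$ and the map is injective. The conclusion $X \geq 2$ follows immediately from \cref{thm:atleastn}, since the only non-vacuous requirement is $\pi_1^e(X) = N_e = 0$. To establish $X < 3$, I check $[S^{k\rho_H + r}, X]^H = 0$ for all subgroups $H \leq \K$ with $k\abs{H} \geq 3$ and $r \geq 0$, organized by $\abs{H}$. When $\abs{H} = 1$ the vanishing is automatic from $N_e = 0$. For each index-two $H \in \{L, D, R\}$, the restricted Mackey functor $\downarrow^\K_H \ul N$ has trivial value at the bottom, so \cref{C2:2slice} implies that $\downarrow^\K_H X \simeq \Si{1} H(\downarrow^\K_H \ul N)$ is a 2-slice over $H$; in particular it is $<3$, giving the required vanishing of $[S^{k\rho_H + r}, X]^H$ for $k \geq 2$.

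The delicate case is $H = \K$, where one must verify $\pi^\K_{k\rho_\K + r - 1}(H\ul N) = 0$ for all $k \geq 1$ and $r \geq 0$. Writing $k\rho_\K + r - 1 = (k+r-1) + k(\alpha+\beta+\gamma)$, this group equals $\pi^\K_{k+r-1}(\Si{-k(\alpha+\beta+\gamma)} H\ul N)$. For every $(k,r)$ with $k + r \geq 2$, \cref{EM:homotopy:reduction} applied to $V = k(\alpha+\beta+\gamma)$ (with trivial summand $s = 0$) forces the vanishing, since $k+r-1 \geq 1 > 0 = -s$. The one remaining case $k = 1$, $r = 0$ is exactly $\pi^\K_0(\Si{-\alpha-\beta-\gamma} H\ul N)$, which \cref{homotopy:-rho:M} identifies with $\ker(N_\K \to N_L \oplus N_D \oplus N_R)$, vanishing by hypothesis. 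The main subtlety is recognizing that the infinitely many conditions at $H = \K$ collapse, under the representation-sphere computations of \cref{homotopy:-rho:M} and \cref{EM:homotopy:reduction}, to the single injectivity condition in the statement.
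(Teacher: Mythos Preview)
Your proof is correct and follows essentially the same route as the paper: \cref{thm:atleastn} for the $\geq 2$ condition, \cref{C2:2slice} for the index-two subgroups, \cref{EM:homotopy:reduction} to reduce the $H=\K$ case, and \cref{homotopy:-rho:M} to identify the remaining obstruction with the kernel of the diagonal restriction. Your organization is slightly cleaner in one spot: the paper reduces to $[S^0,\Si{-k\overline\rho}H\ul N]^\K$ for all $k\geq 1$ and then invokes \cref{Homotopy_Reduction} to propagate the $k=1$ computation to $k\geq 2$, whereas you observe directly that for $k\geq 2$ (and more generally $k+r\geq 2$) the degree $k+r-1\geq 1$ already lies above $0$, so \cref{EM:homotopy:reduction} kills those groups outright and only the single case $(k,r)=(1,0)$ needs \cref{homotopy:-rho:M}.
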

\begin{proof}
By \cref{thm:atleastn}, we find that \(\Si{1} H\ul N\geq 2\) if and only if \(N_e = 0\). Consequently, going forward we may assume \(N_e = 0\).

Now \(\Si{1} H\ul N\leq 2\) if and only if \([S^{k\rho_H+r}, \Si{2} H\ul N]^H=0\) for all \(k\geq \frac{3}{\abs{H}}\) and \(r\geq 0\). 
Because \(N_e = 0\), \cref{C2:2slice} implies that \(i_H^*(\Si{1} H\ul N)\) is a 2-slice, where \(H\) is an order two subgroup of \(\K\). Thus, to finish the equivalence, we only need consider 
\begin{align*}
    [S^{k\rho_\K+r}, \Si{1} H\ul N]^\K = [S^{k+r-1}, \Si{-k\overline{\rho}_\K} H\ul N]^\K
\end{align*}
for all \(k\geq 1\) and \(r\geq 0\).

From \cref{EM:homotopy:reduction}, we need only concern ourselves with \([S^0, \Si{-k\overline{\rho}_\K} H\ul N]\). From \cref{homotopy:-rho:M},
\begin{align*}
    \ul \pi_0(\Si{-\overline\rho} H\ul N) &= \phi^*_\K( \ker \res{\K}{L} \cap \ker \res{\K}{D} \cap \ker \res{\K}{R}).
\end{align*}
Therefore \cref{Homotopy_Reduction} shows that \(\ul \pi_0(\Si{-\overline\rho} H \ul N)\) vanishes if and only if \(\ul \pi_0(\Si{-k\overline\rho} H \ul N)\) vanishes for all \(k\geq 1\). Hence, \(\Si{1} H\ul N\leq 2\) if and only if \(\ker \res{\K}{L} \cap \ker \res{\K}{D} \cap \ker \res{\K}{R} = \{0\}\). By \cref{Kernel_Equivalences} this is equivalent to \(N_\K \rightarrow N_L \oplus N_D \oplus N_R\) being injective.
\end{proof}

\begin{prop} \label{2slice:characterization:si2}
The spectrum \(\Si{2} H\ul M\) is a 2-slice if and only if all restrictions of \(\ul M\) are injective and the sequence
\begin{align*}
    M_\K \xrightarrow{\res{\K}{L}+\res{\K}{D}+\res{\K}{R}} M_L \oplus M_D \oplus M_R \xrightarrow{\left(\begin{smallmatrix} -\res{L}{e} & \res{L}{e} & 0 \\ \res{D}{e} & 0 & -\res{D}{e} \\ 0 & -\res{R}{e} & \res{R}{e} \end{smallmatrix}\right)}  M_e^3
\end{align*}
is exact.
\end{prop}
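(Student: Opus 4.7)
The plan is to follow the template of \cref{2slice:characterization:si1} combined with the subgroup reduction in \cref{C2:2slice}. First, \(\Si{2} H\ul M \geq 2\) holds automatically by \cref{thm:atleastn}, so both directions of the iff reduce to characterizing when \(\Si{2} H\ul M \leq 2\), which I will check one subgroup at a time. For each index-two \(H \leq \K\), the restriction \(i_H^*(\Si{2} H\ul M) = \Si{2} H(i_H^*\ul M)\) is a \(C_2\)-2-slice iff \(\res{H}{e}\) is injective by \cref{C2:2slice}, which yields the three lower restriction injectivities. The trivial subgroup contributes no constraint for degree reasons.

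For \(H = \K\), I rewrite \([S^{k\rho_\K + r}, \Si{2} H\ul M]^\K = \pi_{k + r - 2}^\K(\Si{-k\overline{\rho}_\K} H\ul M)\), where \(\overline{\rho}_\K = \alpha+\beta+\gamma\). Since \(k\overline{\rho}_\K\) has no trivial summand, \cref{EM:homotopy:reduction} kills this group whenever \(k + r - 2 \geq 1\), leaving only the cases \((k, r) \in \{(1, 0), (1, 1), (2, 0)\}\). Exactly as in the proof of \cref{2slice:characterization:si1}, iterating \cref{Homotopy_Reduction} three times reduces the degree-zero vanishing at \(k = 2\) to the \(k = 1\) case, using that each \(\Si{-V} H\ul M\) for \(V\) a positive sum of sign representations has no higher homotopy. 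The two remaining conditions are thus \(\pi_0^\K(\Si{-\overline{\rho}_\K} H\ul M) = 0\) and \(\pi_{-1}^\K(\Si{-\overline{\rho}_\K} H\ul M) = 0\).

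By \cref{homotopy:-rho:M} and \cref{Kernel_Equivalences}, the first vanishing is equivalent to \(M_\K \to M_L \oplus M_D \oplus M_R\) being injective. For the second, \cref{homotopy:-rho:M} identifies \(\pi_{-1}^\K(\Si{-\overline{\rho}_\K} H\ul M)\) with \(E_2\); unwinding the iterated long exact sequences behind \(E_2\) under the hypothesis that \(\res{L}{e}\), \(\res{D}{e}\), \(\res{R}{e}\) are injective, the vanishing \(E_2 = 0\) becomes the image/pullback equality \(\im \res{\K}{L} = (\res{L}{e})^{-1}(\im \res{D}{e}) \cap (\res{L}{e})^{-1}(\im \res{R}{e})\), which by \cref{2slice:equivalences:si2} is precisely the exactness of the displayed sequence. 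Finally, the lower injectivities together with the kernel condition and exactness force injectivity of the upper restrictions: if \(k \in \ker\res{\K}{L}\) then \((0, \res{\K}{D}(k), \res{\K}{R}(k))\) lies in the image of the first map, so its second-map image \(\res{D}{e}(\res{\K}{D}(k))\) and \(\res{R}{e}(\res{\K}{R}(k))\) vanish, whence \(\res{\K}{D}(k) = \res{\K}{R}(k) = 0\) and \(k = 0\) by the kernel condition. This matches the ``all restrictions injective'' part of the stated characterization.

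The main obstacle is translating \(E_2 = 0\) into the algebraic condition: the ``extensions'' in \cref{homotopy:-rho:M} are not literal short exact sequences of abelian groups in general, and naively requiring each subquotient to vanish would over-constrain the Mackey functor (for instance forcing \(\res{\K}{L}\) to be surjective). Instead, the connecting maps in the three iterated LES for \(\Si{-\beta}\), \(\Si{-\alpha-\beta}\), and \(\Si{-\overline{\rho}_\K}\) must be tracked, and the lower restriction injectivities are precisely what make these connecting maps well-behaved so that \cref{2slice:equivalences:si2} can provide the final bridge.
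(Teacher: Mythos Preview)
Your proposal is correct and follows essentially the same route as the paper: reduce to $\leq 2$, handle the cyclic subgroups via \cref{C2:2slice} to obtain the lower restriction injectivities, then reduce the $H=\K$ condition to the vanishing of $\pi_0^\K$ and $\pi_{-1}^\K$ of $\Si{-\overline{\rho}_\K} H\ul M$ using \cref{EM:homotopy:reduction} and \cref{Homotopy_Reduction}, and finally translate those vanishings into the stated algebraic conditions via \cref{homotopy:-rho:M} and \cref{2slice:equivalences:si2}.

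Two small presentational differences are worth noting. First, for the upper restriction injectivity the paper invokes the symmetry of $\Si{-\alpha-\beta-\gamma}$ under reordering the sign representations (so that $\pi_0^\K$ equals each of $\ker\res{\K}{L}$, $\ker\res{\K}{D}$, $\ker\res{\K}{R}$ separately), whereas you deduce it from the Mackey compatibility $\res{D}{e}\circ\res{\K}{D}=\res{L}{e}\circ\res{\K}{L}$ together with the lower injectivities and the kernel condition; both arguments are fine. Second, your caution about the ``extensions'' in \cref{homotopy:-rho:M} is well placed: the paper does not carry out that unwinding either but simply asserts the resulting formula
\[
\pi_{-1}^\K(\Si{-\overline{\rho}} H\ul M)\;=\;\frac{(\res{L}{e})^{-1}(\im\res{D}{e}) \cap (\res{L}{e})^{-1}(\im \res{R}{e})}{\im \res{\K}{L}}
\]
under the lower injectivity hypothesis. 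Your identification of this as the point requiring care, and the reason the lower injectivities make it tractable, is exactly right.
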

\begin{proof}
Note that \(\Si{2} H\ul M \geq 2\), so we just need to show that \(\Si{2} H\ul M\leq 2\). Now \(\Si{2} H\ul M\leq 2\) if and only if \([S^{k\rho_H+r}, \Si{2} H\ul M]^H=0\) for all \(k\geq \frac{3}{\abs{H}}\) and \(r\geq 0\). For \(H\) an order two subgroup of \(\K\), \(i_H^* \Si{2} H\ul M\leq 2\) if and only if \(\res{H}{e}\)
is injective. Consequently, going forward we may assume \(\res{L}{e}\), \(\res{D}{e}\), and \(\res{R}{e}\) are injective.

To finish the equivalence, we only need consider 
\begin{align*}
    [S^{k\rho_\K+r}, \Si{2} H\ul M]^\K = [S^{k+r-2}, \Si{-k\overline{\rho}_\K} H\ul M]^\K
\end{align*}
for all \(k\geq 1\) and \(r\geq 0\). By \cref{EM:homotopy:reduction}, it is enough to examine \([S^0, \Si{-k\overline{\rho}_\K} H\ul M]\) and \([S^{-1}, \Si{-k\overline{\rho}_\K} H\ul M]\). From \cref{homotopy:-rho:M}, 
\begin{align*}
	\ul \pi_0(\Si{-\overline\rho} H\ul M) &= \phi^*_\K( \ker \res{\K}{L}) 
\end{align*}
and 
\begin{align*}
    \ul \pi_{-1}(\Si{-\overline\rho} H\ul M) &= \phi^*_\K \left( \frac{(\res{L}{e})^{-1}(\im\res{D}{e}) \cap (\res{L}{e})^{-1}(\im \res{R}{e})}{\im \res{\K}{L}} \right).
\end{align*}
Note that \cref{homotopy:-rho:M} states
\begin{align*}
    \ul \pi_0(\Si{-\overline\rho} H\ul M) = \phi^*_\K( \ker \res{\K}{L} \cap \ker\res{\K}{D} \cap\ker\res{\K}{R}),
\end{align*}
but because the lower restrictions are injective, these kernels coincide.
\cref{Homotopy_Reduction} then yields that
\begin{align*}
	\ul\pi_0^G(\Si{-k\overline\rho} H\ul M) &\cong \ul\pi_0^G(\Si{-\overline\rho} H\ul M)  = \ker \res{\K}{L} \\
	\ul\pi_{-1}^G(\Si{-k\overline\rho} H\ul M) &\cong \ul\pi_{-1}^G(\Si{-\overline\rho} H\ul M) = \frac{(\res{L}{e})^{-1}(\im\res{D}{e}) \cap (\res{L}{e})^{-1}(\im \res{R}{e})}{\im \res{\K}{L}}
\end{align*}
for all \(k\geq 2\). By \cref{def:lessthan}, we find that \(\Si{2} H\ul M\leq 2\) if and only if these two homotopy groups vanish. Hence, \(\Si{2} H\ul M\leq 2\) if and only if \(\ker \res{\K}{L}=\{0\}\) and \(\im \res{\K}{L} = (\res{L}{e})^{-1}(\im\res{D}{e}) \cap (\res{L}{e})^{-1}(\im \res{R}{e})\). Because the homotopy of \(\Si{-\alpha-\beta-\gamma} H\ul M\) is invariant under the order of construction -- that is, whether \(H\ul M\) is suspended by say \(-\alpha\) or \(-\beta\) first -- we find that all upper restrictions must be injective and that \cref{2slice:res:injective} must hold.
\end{proof}

\begin{thm} \label{2slice:characterization}
Suppose the only nontrivial homotopy Mackey functors of a \(\K\)-spectrum \(X\) are \(\ul\pi_1(X)\) and \(\ul\pi_2(X)\) where
\begin{enumerate}
	\item All restrictions of \(\ul\pi_2(X)\) are injective and the sequence
	\begin{align*}
	\pi_2^\K(X) \xrightarrow{\res{\K}{L}+\res{\K}{D}+\res{\K}{R}} \pi_2^L(X) \oplus \pi_2^D(X) \oplus \pi_2^R(X) \xrightarrow{\left(\begin{smallmatrix} -\res{L}{e} & \res{L}{e} & 0 \\ \res{D}{e} & 0 & -\res{D}{e} \\ 0 & -\res{R}{e} & \res{R}{e} \end{smallmatrix}\right)}  \pi_2^e(X)^3
	\end{align*}
	is exact. \label{2slice:thm:M}
	
	\item \(\pi_1^e(X) = 0\). \label{2slice:thm:N0}
	
	\item \(\pi_1^\K(X) \rightarrow \pi_1^L(X) \oplus \pi_1^D(X) \oplus \pi_1^R(X)\) is injective. \label{2slice:thm:N0:2}
\end{enumerate}
Then \(X\) is a 2-slice. 

Conversely, if a \(\K\)-spectrum \(X\) is a 2-slice, then its only nontrivial homotopy Mackey functors of a \(\K\)-spectrum \(X\) are \(\ul\pi_1(X)\) and \(\ul\pi_2(X)\) where \(\Si{2} H\ul\pi_2(X)\) is a 2-slice and \(\Si{1} H\ul\pi_1(X) \in [2,4]\), i.e., both \cref{2slice:thm:M} and \cref{2slice:thm:N0} hold.
\end{thm}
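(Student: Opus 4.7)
My plan is to prove both directions by invoking the propositions \cref{2slice:characterization:si1} and \cref{2slice:characterization:si2} and by imitating the $C_2$ argument of \cref{C2:2slice}.

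For the forward implication, set $\ul M = \ul\pi_2(X)$ and $\ul N = \ul\pi_1(X)$. Conditions \cref{2slice:thm:M}, \cref{2slice:thm:N0}, and \cref{2slice:thm:N0:2} are exactly the hypotheses of \cref{2slice:characterization:si2} applied to $\Si{2} H\ul M$ and of \cref{2slice:characterization:si1} applied to $\Si{1} H\ul N$, so both spectra are 2-slices. The Postnikov fiber sequence
\[ \Si{2} H\ul M \to X \to \Si{1} H\ul N \]
then exhibits $X$ as an extension of two 2-slices, and since the 2-slices are closed under extensions (they form the heart of the regular slice $t$-structure), $X$ is a 2-slice.

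For the converse, I would assume $X$ is a 2-slice and apply \cref{thm:atleastn} to $X\geq 2$ to obtain $\ul\pi_k(X)=\ul 0$ for $k\leq 0$ and $\pi_1^e(X)=0$, which is \cref{2slice:thm:N0}. Then \cref{def:lessthan} applied to $X<3$, combined with the $\sigma^H$-representation sphere cofiber sequences exactly as in the $C_2$ case of \cref{C2:2slice}, propagates to $\pi_k(X)=0$ for $k\geq 3$. Thus the only nontrivial homotopy Mackey functors are $\ul\pi_1(X)$ and $\ul\pi_2(X)$, which yields the Postnikov fiber sequence above. Since $N_e=0$ we have $\Si{1} H\ul N\geq 2$, and the bound $\Si{1} H\ul N\leq 4$ follows from a direct representation-sphere homotopy calculation via \cref{EM:homotopy:reduction}.

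The step I expect to be the main obstacle is verifying that $\Si{2} H\ul M$ is itself a 2-slice, i.e., condition \cref{2slice:thm:M}. The plan is to apply $[S^{k\rho_H+r}, -]^H$ for $k\abs{H}\geq 3$ to the Postnikov fiber sequence: since the middle term $[S^{k\rho_H+r}, X]^H$ vanishes by $X\leq 2$, the long exact sequence relates the homotopy of $\Si{2} H\ul M$ in this range to that of $\Si{1} H\ul N$. The bound $\Si{1} H\ul N\leq 4$ disposes of the range $k\abs{H}\geq 5$, while for $k\abs{H}\in\{3,4\}$ the contributions from $\Si{1} H\ul N$ must be tracked carefully against the connecting homomorphisms. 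Feeding the resulting vanishings on $\Si{-k\overline{\rho}}\Si{2} H\ul M$ into \cref{homotopy:-rho:M} then translates them into the injectivity of the restrictions of $\ul M$ and into the three-term exactness condition, at which point \cref{2slice:characterization:si2} concludes that $\Si{2} H\ul M$ is a 2-slice.
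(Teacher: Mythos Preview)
Your forward direction and the skeleton of the converse are the same as the paper's. The difference is in your ``main obstacle'' step, where your plan has a gap.

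You propose to show $\Si{2}H\ul M \leq 2$ by applying $[S^{k\rho_H+r},-]^H$ to the fiber sequence $\Si{2}H\ul M \to X \to \Si{1}H\ul N$. For $k\abs{H}\geq 3$ and $r\geq 0$, \emph{both} $[S^{k\rho_H+r},X]^H$ and $[S^{k\rho_H+r+1},X]^H$ vanish since $X\leq 2$, so the long exact sequence gives an isomorphism
\[
[S^{k\rho_H+r+1},\Si{1}H\ul N]^H \xrightarrow{\ \cong\ } [S^{k\rho_H+r},\Si{2}H\ul M]^H.
\]
There is therefore no connecting-map cancellation available in the range $k\abs{H}\in\{3,4\}$: you must show the left side vanishes outright. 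The bound $\Si{1}H\ul N\leq 4$ only kills it for $k\abs{H}\geq 5$, so your plan as written does not close the gap.

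The paper sidesteps this entirely by rotating the fiber sequence to $H\ul N \to \Si{2}H\ul M \to X$. Since $H\ul N$ is a $0$-slice (hence $\leq 0\leq 2$) and $X\leq 2$, closure of the class $\{\leq 2\}$ under extensions gives $\Si{2}H\ul M\leq 2$ in one line, after which \cref{2slice:characterization:si2} yields condition~\cref{2slice:thm:M}. Equivalently, in your long exact sequence the left side of the displayed isomorphism is $[S^{k\rho_H+r},H\ul N]^H$, and this already vanishes for every $k\abs{H}\geq 1$ because $H\ul N\leq 0$. The relevant input is that $H\ul N$ is a $0$-slice, not that $\Si{1}H\ul N\leq 4$.
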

\begin{proof}
Let \(\ul\pi_2(X)=\ul M\) and \(\ul\pi_1(X)=\ul N\). If these are the only nontrivial homotopy Mackey functors of \(X\), we have a fiber sequence
\begin{align} \label{2slice:fibseq}
    \Si{2} H\ul M\rightarrow X\rightarrow \Si{1} H\ul N.
\end{align}
By \cref{2slice:characterization:si1} and \cref{2slice:characterization:si2}, conditions \eqref{2slice:thm:M} - \eqref{2slice:thm:N0:2} show that \(\Si{2} H\ul M\) and \(\Si{1} H\ul N\) are 2-slices. Now if \(\Si{2} H\ul M\) and \(\Si{1} H\ul N\) are 2-slices, then \(X\) must be a 2-slice as well. 

Conversely, suppose that \(X\) is a 2-slice. We then find that \(X\) has no homotopy above degree two and none below degree one; thus, we have the fiber sequence in \cref{2slice:fibseq}.
Because \(X\geq 2\) and \(\Si{2} H\ul M\geq 2\), it follows that \(\Si{1} H\ul N\geq 2\). So by \cref{thm:atleastn}, \(N_e = 0\). That \(\Si{1} H\ul N\leq 4\) follows from a similar argument as in \cref{2slice:characterization:si1}.
 
Rotating this fiber sequence gives \(H\ul N\rightarrow \Si{2} H\ul M \rightarrow X\). As \(H\ul N\) is a 0-slice and \(X\) is a 2-slice, we have \(\Si{2} H\ul M \in [0,2]\), so it must be that \(\Si{2} H\ul M = 2\). Consequently, by \cref{2slice:characterization:si2}, \cref{2slice:thm:M} holds.
\end{proof}

It is not necessary for condition \eqref{2slice:thm:N0:2} to hold for \(X\) to be a 2-slice as we show in the following example.

\begin{exmp}
Take \(X \simeq \Si{1+\beta} H\ul E\) where
\begin{equation*}
	\ul E = \begin{tikzcd}[ampersand replacement=\&, column sep=scriptsize]
		\& \F \ar[d, color=\resC] \ar[dr,color=\resC] \& \\
		0 \& \F \& \F \\
		\& \F \ar[u, color=\trC] \ar[ur, color=\trC] \&
	\end{tikzcd}
\end{equation*}
Then \(X\) is a 2-slice with \(\ul\pi_2(X) = \ul f\) and \(\ul\pi_1(X) = \ulg\). But \(\Si{1} H\ulg\) is not a 2-slice.
\end{exmp}
\begin{proof}
We can construct \(\Si{1+\beta} H\ul E\) using the cofiber sequence 
\begin{align*}
    (\K/L_+ \rightarrow S^0 \rightarrow S^\beta) \wedge \Si{1} H\ul E.
\end{align*}
The resulting long exact sequence in homotopy is

\begin{flushright}
% pi_2 row
\begin{tikzpicture}[scale=0.8, every node/.style={scale=0.75},xscale=0.9]
    \node (pi2) at (-9,1.5) {$\ul \pi_2$};
    \node (MD) at (0,1.5) {$\ul 0$};
    \node (rMK) at (5.5,3) {$0$};
    \node (rML) at (4,1.5) {$0$};
    \node (rMD) at (5.5,1.5) {$0$};
    \node (rMR) at (7,1.5) {$0$};
    \node (rMe) at (5.5,0) {$\F$};
    \draw[->] (MD) to (rML);
\end{tikzpicture}	

% pi_1 row	
\begin{tikzpicture}[scale=0.8, every node/.style={scale=0.75},xscale=0.9]
    \node (pi1) at (-9,1.5) {$\ul \pi_1$};
    \node (lMK) at (-5.5,3) {$0$};
    \node (lML) at (-7,1.5) {$0$};
    \node (lMD) at (-5.5,1.5) {$\F$};
    \node (lMR) at (-4,1.5) {$\F$};
    \node (lMe) at (-5.5,0) {$\F[\K/L]$};
    \node (MK) at (0,3) {$\F$};
    \node (ML) at (-1.5,1.5) {$0$};
    \node (MD) at (0,1.5) {$\F$};
    \node (MR) at (1.5,1.5) {$\F$};	
    \node (Me) at (0,0) {$\F$};
    \node (rMK) at (5.5,3) {$\F$};
    \node (rML) at (4,1.5) {$0$};
    \node (rMD) at (5.5,1.5) {$0$};
    \node (rMR) at (7,1.5) {$0$};
    \node (rMe) at (5.5,0) {$0$};
    %
    % left maps
    \arres{lMD}{lMe}{-4}{0}{$\Delta$}
    \arres{lMR}{lMe}{0}{0}{$\Delta$}
    \artr{lMe}{lMD}{0}{0}{$\nabla$}
    \artr{lMe}{lMR}{0}{0}{$\nabla$}
    \draw[->, \resC] (MK) to node[right,xshift={2pt}] {\scalebox{0.6}{$1$}} (MR);
    \draw[->, \resC] (MK) to node[right,xshift={2pt}] {\scalebox{0.6}{$1$}} (MD);
    \draw[->, \trC] (Me) to node[right,xshift={2pt}] {\scalebox{0.6}{$1$}}  (MR);
    \draw[->, \trC] (Me) to node[right,xshift={-2pt}] {\scalebox{0.6}{$1$}}  (MD);
    \draw[->] (lMK) to node[above, xshift={-1pt}] {} (MK);
    \draw[->] (lMR) to node[above, xshift={-1pt}] {\scalebox{0.7}{$\begin{pmatrix}0 \\ 1 \\ 1 \end{pmatrix}$}} (ML);
    \draw[->] (lMe) to node[above, xshift={-1pt}] {\scalebox{0.7}{$\nabla$}} (Me);
    \draw[->>] (MK) to (rMK);
    \draw[->>] (MR) to (rML);
    \draw[->>] (Me) to (rMe);
\end{tikzpicture}
\end{flushright}

So \(\ul \pi_2(X) = \ul f\) and \(\ul \pi_1(X) = \ulg\). From this we see that \(X\geq 2\) by \cref{2slice:characterization}. Note that \(\Si{2} H\ul f\) is a 2-slice and \(\Si{1} H\ulg\) is a 4-slice.

To show that \(X\leq 2\), we need \([S^{k\rho_H+r}, \Si{1+\beta} H\ul E]^H=0\) for all \(k\geq \frac{3}{\abs{H}}\) and \(r\geq 0\). As \(i_H^*(X) \simeq \Si{2} H_{C_2} \ul f\) is a 2-slice, where \(H\) is an order two subgroup of \(\K\), we only need to consider 
\begin{align*}
    [S^{k\rho_\K+r}, \Si{1+\beta} H\ul E]^\K = [S^{k+r-1}, \Si{-\alpha-\gamma-(k-1) \overline{\rho}_\K} H\ul E]^\K
\end{align*}
for all \(k\geq 1\) and \(r\geq 0\).

By \cref{EM:homotopy:reduction}, it is sufficient to examine
\([S^0, \Si{-\alpha-\gamma-(k-1)\overline{\rho}_\K} H\ul E]^\K\)
for all \(k\geq 1\).

From \cref{homotopy:-beta:M} we find that \([S^0, \Si{-\alpha-\gamma} H\ul E] = \ul 0\). Consequently, given any \(k\geq 1\), repeated application of \cref{Homotopy_Reduction} shows that 
\begin{align*}
    [S^0, \Si{-\alpha-\gamma-(k-1)\overline{\rho}_\K} H\ul E] = \ul 0.
\end{align*}
That is, \(X\) is a 2-slice.
\end{proof}

%%%%%%%%%%%%%%%%%%%%%%%%%%%%%%%%%%%%%
\section{Cotowers for \texorpdfstring{\(\Si{-n} H\ulZ\)}{HZ}} \label{sec:cotowers:K:Z}
%%%%%%%%%%%%%%%%%%%%%%%%%%%%%%%%%%%%%

We determine the slice towers of \(\Si{-n} H\ulZ\) and \(\Si{-n} H\ul m^*\) for \(1\leq n\leq 5\).

\begin{exmp} \label{slices:K:m*:1-2}
By \cref{charac:slice:01-1} and \cref{m:mg:equiv}, \(\Si{-1} H\ul m^*\) is a \((-2)\)-slice and \(\Si{-2} H\ul m^*\) is a \((-4)\)-slice.

Alternatively, by \cref{m:mg:equiv}, \(\Si{\rho}\Si{-1} H\ul m^* \simeq \Si{1} H\ul {mg}\), which is a 2-slice by \cref{2slice:characterization:si1}. Consequently, \(\Si{-1} H\ul m^*\) is a \((-2)\)-slice.
\end{exmp}

\begin{exmp} \label{tower:-1:K:Z}
By \cite[Theorem 6-4]{U}, the cotower for \(\Si{-1} H\ulZ\) is
\begin{equation*}
\begin{tikzcd}
    \Si{-1} H\ulZ^* \arrow[d] = P^{-1}_{-1} & \\
    \Si{-1} H\ulZ(2,1) \arrow[d] \arrow[r] & \Si{-1} H\ul m^* = P^{-2}_{-2} \\
    \Si{-1} H\ulZ \arrow[r] & \Si{-1} H\ulg = P^{-4}_{-4}
\end{tikzcd}
\end{equation*}
\end{exmp}

\begin{exmp} \label{tower:-2:K:Z}
We suspend the cotower in \cref{tower:-1:K:Z} by \(-1\) to get the cotower for \(\Si{-2} H\ulZ\).
\begin{equation*}
\begin{tikzcd}
    \Si{-2} H\ulZ^* \arrow[d] = P^{-2}_{-2} & \\
    \Si{-2} H\ulZ(2,1) \arrow[d] \arrow[r] & \Si{-2} H\ul m^* = P^{-4}_{-4} \\
    \Si{-2} H\ulZ \arrow[r] & \Si{-2} H\ulg = P^{-8}_{-8}
\end{tikzcd}
\end{equation*}
\end{exmp}

\begin{exmp} \label{tower:-3:K:Z}
We suspend the cotower in \cref{tower:-2:K:Z} by \(-1\) to get a partial cotower for \(\Si{-3} H\ulZ\).
\begin{equation*}
\begin{tikzcd}
	\Si{-3} H\ulZ^* \arrow[d] = P^{-3}_{-3} & \\
	\Si{-3} H\ulZ(2,1) \arrow[d] \arrow[r] & \Si{-3} H\ul m^* \\
	\Si{-3} H\ulZ \arrow[r] & \Si{-3} H\ulg = P^{-12}_{-12}
\end{tikzcd}
\end{equation*}

The issue here is that \(\Si{-3} H\ul m^*\) is not a slice. 

However, the short exact sequence
\begin{align*}
    \ulg^2 \rightarrow \phi^*_{LDR} (\ulF^*) \rightarrow \ul m^*
\end{align*}
provides the tower
\begin{align*}
    P^{-6}_{-6} = \Si{-3} H\phi^*_{LDR} (\ulF^*) \rightarrow \Si{-3} H\ul m^* \rightarrow \Si{-2} H\ulg^2 = P^{-8}_{-8}.
\end{align*}
Consequently, the remaining slices of \(\Si{-3} H\ulZ\) are \(P^{-6}_{-6} = \Si{-2p+1} H\phi^*_{LDR}( \ulF)\) and \(P^{-8}_{-8} = \Si{-2} H\ulg^2\).
\end{exmp}

\begin{exmp} \label{tower:-4:K:Z}
We suspend the cotower in \cref{tower:-3:K:Z} by \(-1\) to get a partial cotower for \(\Si{-4} H\ulZ\).
\begin{equation*}
\begin{tikzcd}
	\Si{-4} H\ulZ^* \arrow[d] = P^{-4}_{-4} & \\
	\Si{-4} H\ulZ(2,1) \arrow[d] \arrow[r] & \Si{-4} H\ul m^* \\
	\Si{-4} H\ulZ \arrow[r] & \Si{-4} H\ulg = P^{-16}_{-16}
\end{tikzcd}
\end{equation*}

Again, \(\Si{-4} H\ul m^*\) is not a slice. But suspending the cotower for \(\Si{-3} H\ul m^*\) by \(-1\) provides the missing slices: \(P^{-8}_{-8} = \Si{-4} H\phi^*_{LDR}( \ulF^*)\) and \(P^{-12}_{-12} = \Si{-3} H\ulg^2\).
\end{exmp}

\begin{exmp} \label{tower:-5:K:Z}
We suspend the cotower in \cref{tower:-4:K:Z} by \(-1\) and augment with the \(-\rho\) suspension of \cref{tower:-1:K:Z} to get a partial cotower for \(\Si{-5} H\ulZ\).
\begin{equation*}
\begin{tikzcd}
	\Si{-\rho-1} H\ulZ^* \arrow[d] = P^{-5}_{-5} & \\
	\Si{-\rho-1} H\ulZ(2,1) \arrow[d] \arrow[r] & \Si{-\rho-1} H\ul m^* = P^{-6}_{-6} \\
	\Si{-\rho-1} H\ulZ \arrow[d] \arrow[r] & \Si{-2} H\ulg = P^{-8}_{-8} \\
	\Si{-5} H\ulZ(2,1) \arrow[d] \arrow[r] & \Si{-5} H\ul m^* \\
	\Si{-5} H\ulZ \arrow[r] & \Si{-5} H\ulg = P^{-20}_{-20}
\end{tikzcd}
\end{equation*}

This time, the cotower for \(\Si{-5} H\ul m^*\) is 
\begin{equation*}
\begin{tikzcd}
    \Si{-3\rho + 1} H\phi^*_{LDR}(\ulF) = P^{-10}_{-10} \arrow[d]  & \\
    \Si{-5} H\phi^*_{LDR}(\ulF^*) \arrow[d] \arrow[r] & \Si{-3} H\ulg^3 = P^{-12}_{-12} \\
    \Si{-5} H\ul m^* \arrow[r] & \Si{-4} H\ulg^2 = P^{-16}_{-16}
\end{tikzcd}
\end{equation*}

The remaining slices of \(\Si{-5} H\ulZ\) are then \(P^{-10}_{-10} = \Si{-3\rho+1} H\phi^*_{LDR}(\ulF)\), \newline 
\(P^{-12}_{-12} = \Si{-3} H\ulg^3\), and \(P^{-16}_{-16} = \Si{-4} H\ulg^2\).
\end{exmp}

\begin{exmp} \label{tower:-7:K:Z}
The partial cotower for \(\Si{-7} H\ulZ\) follows by suspending the partial cotower in \cref{tower:-5:K:Z} by \(-2\).
\begin{equation*}
\begin{tikzcd}
	\Si{-\rho-3} H\ulZ^* \arrow[d] = P^{-7}_{-7} & \\
	\Si{-\rho-3} H\ulZ(2,1) \arrow[d] \arrow[r] & \Si{-\rho-3} H\ul m^* \\
	\Si{-\rho-3} H\ulZ \arrow[d] \arrow[r] & \Si{-4} H\ulg = P^{-16}_{-16} \\
	\Si{-7} H\ulZ(2,1) \arrow[d] \arrow[r] & \Si{-7} H\ul m^* \\
	\Si{-7} H\ulZ \arrow[r] & \Si{-7} H\ulg = P^{-28}_{-28}
\end{tikzcd}
\end{equation*}

We now have a cotower for \(\Si{-\rho-3} H\ul m^*\).
\begin{align*} 
    P^{-10}_{-10} = \Si{-\rho-3} H\phi^*_{LDR} (\ulF^*) \rightarrow \Si{-\rho-3} H\ul m^* \rightarrow \Si{-3} H\ulg^2 = P^{-12}_{-12},
\end{align*}

And a cotower for \(\Si{-7} H\ul m^*\).
\begin{equation*}
\begin{tikzcd}
    \Si{-4\rho + 1} H\phi^*_{LDR}(\ulF) = P^{-14}_{-14} \arrow[d]  & \\
    \Si{-\rho-5} H\phi^*_{LDR}(\ulF^*) \arrow[d] \arrow[r] & \Si{-4} H\ulg^3 = P^{-16}_{-16} \\
    \Si{-7} H\phi^*_{LDR}(\ulF^*) \arrow[d] \arrow[r] & \Si{-5} H\ulg^3 = P^{-20}_{-20} \\
    \Si{-7} H\ul m^* \arrow[r] & \Si{-6} H\ulg^2 = P^{-24}_{-24}
\end{tikzcd}
\end{equation*}

We now see interference from the cotower for \(\Si{-7} H\ul m^*\). Its \((-14)\)-slice appears below the \((-16)\)-slice in the partial cotower for \(\Si{-7} H\ulZ\). Additionally, both of these (partial) cotowers have a \((-16)\)-slice.

\cref{-4kslices:K:Z} and \cref{-4k-2slices:K:Z} tell us that \(P^{-14}_{-14}(\Si{-7} H\ulZ)\) is indeed \(\Si{-4\rho + 1} H\phi^*_{LDR}(\ulF)\) and \(P^{-16}_{-16}(\Si{-7} H\ulZ)\) is \(\Si{-4} H\ulg \vee \Si{-4} H\ulg^3 \simeq \Si{-4} H\ulg^4\).
\end{exmp}

All partial cotowers for \(\Si{-n} H\ulZ\) will follow this pattern of utilizing the homotopy equivalence \(\Si{-n} H\ulZ^* \simeq \Si{-\rho-n+4} H\ulZ\) to augment the bottom of the cotower for \(\Si{-n} H\ulZ\) with the cotower for \(\Si{-\rho-n+4} H\ulZ\).

%%%%%%%%%%%%%%%%%%%%%%%%%%%%%%%%%%%%%
\section{Slices of \texorpdfstring{\(\Si{-n} H_{\K}\ul \Z\)}{HZ}} \label{sec:slices:K:Z}
%%%%%%%%%%%%%%%%%%%%%%%%%%%%%%%%%%%%%

Here we determine the slices of \(\Si{-n} H_{\K}\ulZ\).

\begin{prop} \label{slice:K:Z1-5:Z*1-4}
\(\Si{n} H\ulZ\) is an \(n\)-slice for \(1\leq n\leq 5\) and \(\Si{-n} H\ulZ^*\) is a \((-n)\)-slice for \(1\leq n\leq 4\).
\end{prop}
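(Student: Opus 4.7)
The plan is to dispatch these ten slice claims by reducing each to one of three inputs: the classification of $0$-, $1$-, and $(-1)$-slices in \cref{charac:slice:01-1}, the $2$-slice characterization in \cref{2slice:characterization:si2}, and the interplay between the equivalence $\Si{-\rho}H\ulZ\simeq\Si{-4}H\ulZ^*$ of \cref{equivalence:ZZ*} and the suspension formula \cref{rho:susp:commutes}. The structural fact making this routine is that every restriction and transfer in $\ulZ$ and $\ulZ^*$, as recorded in \cref{tab-newKMackey}, is either the identity or multiplication by $2$ on $\Z$, so the injectivity and surjectivity hypotheses of \cref{charac:slice:01-1} are automatically satisfied whenever they are invoked.

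First I would handle the extreme degrees. Parts (1) and (2) of \cref{charac:slice:01-1} immediately give that $H\ulZ$ and $H\ulZ^*$ are $0$-slices and that $\Si{1}H\ulZ$ and $\Si{1}H\ulZ^*$ are $1$-slices, since every restriction in either Mackey functor is a monomorphism $\Z\to\Z$. Part (3) applied with $n=1$ gives that $\Si{-1}H\ulZ^*$ is a $(-1)$-slice, since the transfers in $\ulZ^*$ are multiplication by $1$, hence surjective, and $\ulZ^*(G/H)\neq 0$ for every $H$.

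The main obstacle is showing $\Si{2}H\ulZ$ is a $2$-slice, which I would verify by applying \cref{2slice:characterization:si2} directly to $\ulZ$. The injectivity of the restrictions is immediate, so the remaining task is to check exactness of
\begin{equation*}
\Z \xrightarrow{(1,1,1)} \Z\oplus\Z\oplus\Z \xrightarrow{\left(\begin{smallmatrix} -1 & 1 & 0 \\ 1 & 0 & -1 \\ 0 & -1 & 1\end{smallmatrix}\right)}  \Z^3
\end{equation*}
at the middle term. This is a direct linear-algebra computation: the kernel of the second map consists of triples $(a,b,c)$ with $a=b=c$, which is exactly the image of the first map. This is the only step of the argument that requires any calculation beyond reading off a table.

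With the four base cases in hand, the remaining six slices follow formally by applying $\Si{\pm\rho}$. The equivalence \cref{equivalence:ZZ*} rewrites $\Si{-n}H\ulZ^*\simeq\Si{-\rho}\Si{4-n}H\ulZ$ and $\Si{n}H\ulZ\simeq\Si{\rho}\Si{n-4}H\ulZ^*$, and \cref{rho:susp:commutes} shifts slice degree by $\pm 4$ under the corresponding suspensions. Thus the $0$-, $1$-, and $2$-slice results for $H\ulZ$, $\Si{1}H\ulZ$, $\Si{2}H\ulZ$ yield the $(-4)$-, $(-3)$-, and $(-2)$-slice results for $\Si{-4}H\ulZ^*$, $\Si{-3}H\ulZ^*$, $\Si{-2}H\ulZ^*$, while the $(-1)$-, $0$-, and $1$-slice results for $\Si{-1}H\ulZ^*$, $H\ulZ^*$, $\Si{1}H\ulZ^*$ yield the $3$-, $4$-, and $5$-slice results for $\Si{3}H\ulZ$, $\Si{4}H\ulZ$, $\Si{5}H\ulZ$, completing the proof.
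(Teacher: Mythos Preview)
Your proof is correct and follows essentially the same approach as the paper's: establish the base cases $\Si{1}H\ulZ$, $\Si{1}H\ulZ^*$, $\Si{2}H\ulZ$, and $\Si{-1}H\ulZ^*$ via \cref{charac:slice:01-1} and the $2$-slice characterization, then propagate using \cref{equivalence:ZZ*} together with \cref{rho:susp:commutes}. Your version is more explicit than the paper's in two harmless ways: you spell out the $0$-slice cases $H\ulZ$ and $H\ulZ^*$ (needed for $\Si{-4}H\ulZ^*$ and $\Si{4}H\ulZ$, which the paper leaves implicit), and you carry out the linear-algebra verification of the exactness condition in \cref{2slice:characterization:si2} rather than simply citing it.
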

\begin{proof}
The \(\K\)-spectra \(\Si{1} H\ulZ\), \(\Si{1} H\ulZ^*\), \(\Si{2} H\ulZ\), and \(\Si{-1} H\ulZ^*\) are 1-, 2-, and \((-1)\)-slices by \cref{charac:slice:01-1} and \cref{2slice:characterization}. The result then follows from \cref{equivalence:ZZ*} and the resulting equivalences 
\begin{align*}
    \Si{5} H \ulZ \simeq \Si{\rho+1} H \ul \Z^*, \quad \Si{4} H \ulZ \simeq \Si{\rho} H \ulZ^*, \quad \text{ and }\quad \Si{3} H \ul\Z \simeq \Si{\rho-1} H\ulZ^*.
\end{align*}
\end{proof}

%%%%%%%%%%%%%%%%%%%%%%%%%%%%%%%%%%%%%
\subsection{The \texorpdfstring{\((-n)\)}{(-n)}-slice} \label{sec:-nslice:K:Z}
%%%%%%%%%%%%%%%%%%%%%%%%%%%%%%%%%%%%%

We first establish a comparison of the slices of \(\Si{-n} H\ulZ\) with those of \(\Si{-n+4} H\ulZ\).

\begin{prop} \label{slice:reduction:ZZ^*}
Let \(n\geq 5\). Then
\begin{align*}
    P^{-k}_{-k} ( \Si{-n} H\ulZ) \simeq \Si{-\rho} P^{-k+4}_{-k+4} (\Si{-n+4} H\ulZ)
\end{align*}
for \(k\in[n,2n-1]\).
\end{prop}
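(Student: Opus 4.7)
The plan is to reformulate the claim as a comparison of the $(-k)$-slices of $\Sigma^{-n} H\ulZ$ with those of $\Sigma^{-n} H\ulZ^*$ directly, and then to control the obstruction via the short exact sequence $\ulZ^* \hookrightarrow \ulZ \twoheadrightarrow \ulM$ defining $\ulM$ in \cref{tab-newKMackey}.

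First, combining \cref{equivalence:ZZ*} with \cref{rho:susp:commutes} yields $\Sigma^{-\rho}(\Sigma^{-n+4} H\ulZ) \simeq \Sigma^{-n} H\ulZ^*$ together with the identification
\[
\Sigma^{-\rho} P^{-k+4}_{-k+4}(\Sigma^{-n+4} H\ulZ) \simeq P^{-k}_{-k}(\Sigma^{-n} H\ulZ^*),
\]
which reduces the proposition to showing $P^{-k}_{-k}(\Sigma^{-n} H\ulZ) \simeq P^{-k}_{-k}(\Sigma^{-n} H\ulZ^*)$ for $k \in [n, 2n-1]$. Applying the exact slice functor $P^{-k}_{-k}$ to the cofiber sequence $\Sigma^{-n} H\ulZ^* \to \Sigma^{-n} H\ulZ \to \Sigma^{-n} H\ulM$, it suffices to show that $\Sigma^{-n} H\ulM$ has no nontrivial $(-k)$-slice for $k \in [n, 2n-1]$.

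The snake lemma applied to the two short exact sequences of \cref{tower:-1:K:Z} yields a further cofiber sequence $\Sigma^{-n} H\ul{m}^* \to \Sigma^{-n} H\ulM \to \Sigma^{-n} H\ulg$, so it suffices that neither end term has slices in $[-(2n-1), -n]$. Since $\ulg(\K/H) = 0$ for every proper subgroup $H$, smashing each cofiber sequence $S^{-\sigma^H} \to S^0 \to \K/H_+$ with $H\ulg$ makes the rightmost term contractible and so produces an equivalence $\Sigma^{-\sigma^H} H\ulg \simeq H\ulg$; iterating over $H \in \{L, D, R\}$ and invoking \cref{rho:susp:commutes} then shows $\Sigma^{-n} H\ulg$ is a $(-4n)$-slice, comfortably out of range for $n \geq 5$. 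For $\Sigma^{-n} H\ul{m}^*$, the plan is to use \cref{m:mg:equiv} to rewrite $\Sigma^{-n} H\ul{m}^* \simeq \Sigma^{-(n-2)}(\Sigma^{-\rho} H\ul{mg})$ and then, starting from the fact that $\Sigma^{-\rho} H\ul{mg}$ is a $(-4)$-slice (since $H\ul{mg}$ is a $0$-slice) and iterating the cotower constructions of \cref{tower:-3:K:Z} and \cref{tower:-5:K:Z}, show that every slice of $\Sigma^{-n} H\ul{m}^*$ lies in the range $[-4(n-1), -2n]$, which is again disjoint from $[-(2n-1), -n]$.

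The main obstacle will be this last step: proving uniformly in $n$ that $\Sigma^{-n} H\ul{m}^*$ has no slices above $-2n$. The pattern is clear in the small-$n$ examples, but a clean general argument most likely proceeds by induction on $n$ built on the short exact sequence $\ulg^2 \to \phi^*_{LDR}(\ulF^*) \to \ul{m}^*$ underlying \cref{tower:-3:K:Z}, or by a direct analysis of the slice tower of $\Sigma^{-(n-2)}(\Sigma^{-\rho} H\ul{mg})$.
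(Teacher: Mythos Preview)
Your reduction to comparing the $(-k)$-slices of $\Sigma^{-n}H\ulZ$ and $\Sigma^{-n}H\ulZ^*$ via the cofiber $\Sigma^{-n}H\ulM$ is exactly how the paper begins. Two issues after that.

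First, a technical point: the slice functor $P^{-k}_{-k}$ is \emph{not} exact, so ``applying the exact slice functor to the cofiber sequence'' is not a valid step. What you actually need is that $\Sigma^{-n}H\ulM \leq -2n$; then \cite[Lemma~4.28]{HHR} (in dual form) gives that $\Sigma^{-n}H\ulZ^* \to \Sigma^{-n}H\ulZ$ is an equivalence on slices strictly above $-2n$. Since $H\ulM$ is a $0$-slice, $\Sigma^{-n}H\ulM \leq -n$ automatically, so your stated vanishing in $[-(2n-1),-n]$ would indeed yield $\leq -2n$; but this needs to be said rather than appealing to a nonexistent exactness.

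Second, and more substantively: you decompose $\ulM$ as an extension of $\ulg$ by $\ul m^*$ and then leave the bound $\Sigma^{-n}H\ul m^* \leq -2n$ as an acknowledged obstacle. The paper bypasses this entirely with Brown--Comenetz duality. By \cref{duality:prop}, $\Sigma^{-n}H\ulM \leq -2n$ is equivalent to $I_{\Q/\Z}(\Sigma^{-n}H\ulM) \geq 2n$; since $\ulM$ is torsion this dual is $\Sigma^{n}H\ulM^*$, and because $(\ulM^*)_e = 0$ the underlying spectrum is contractible, so \cref{thm:atleastn} gives $\Sigma^{n}H\ulM^* \geq 2n$ immediately. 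No decomposition of $\ulM$, no induction on $n$, no separate analysis of $\ul m^*$. Your route can be completed (the same duality trick works for $\ul m^*$, since $(\ul m)_e = 0$ as well), but once you see that argument you may as well apply it to $\ulM$ directly and skip the detour.
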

\begin{proof}
By \cref{equivalence:ZZ*}, we have
\begin{align*}
    \Si{-\rho} P^{-k+4}_{-k+4} (\Si{-n+4} H\ulZ) \simeq \Si{\rho} \Si{-\rho} P^{-k}_{-k} (\Si{-n+4-\rho} H\ulZ) \simeq P^{-k}_{-k} (\Si{-n} H\ulZ^*).
\end{align*}
Thus, it is sufficient to compare the \((-k)\)-slices of \(\Si{-n} H\ulZ\) and \(\Si{-n} H\ulZ^*\).

Recall the injection \(\ulZ^* \rightarrow \ulZ \rightarrow \ulM\) from \cref{sec:K:Mackey}. We wish to show that
\begin{align} \label{inj:ZZM:-j}
    \Si{-n} H\ulZ^* \rightarrow \Si{-n} H\ulZ \xrightarrow{\mu}{} \Si{-n} \ulM
\end{align}
induces an equivalence on slices strictly above level \(-2n\).

We take the Brown-Comenetz dual of \cref{inj:ZZM:-j} to find the fiber sequence
\begin{align*}
    \Si{n} H\ulM \xrightarrow{\iota}{} \Si{n} I_{\Q/\Z} H\ulZ \rightarrow \Si{n} I_{\Q/\Z} H\ulZ^*
\end{align*}

Now \(\Si{n} H\ulM\) is \(n\)-connective, and as its underlying spectrum is contractible, when \(n\geq 1\), we know \(\Si{n} H\ulM \geq 2n\) by \cref{thm:atleastn}. Then \cite[Lemma 4.28]{HHR} provides that \(\iota\) induces an equivalence on slices strictly below level \(2n\). It then follows from \cref{duality:prop} that \(\mu\) in \cref{inj:ZZM:-j} induces an equivalence on slices strictly above level \(-2n\).
\end{proof}

This is an analogous result to \cite[Proposition 5.3]{GY}. However, the injection \(\ulZ^* \hookrightarrow \ulZ\) allows us to simplify the argument.

\begin{prop} \label{-nslice:K:Z}
Let \(n\geq 1\) and take \(r\equiv n\pmod 4\) with \(1\leq r\leq 4\). Then the top slice of \(\Si{-n} H\ulZ\) is
\begin{align*}
P^{-n}_{-n}( \Si{-n} H\ulZ) = \Si{-\frac{n-r}{4}\rho -r} H\ulZ^*
\end{align*}
\end{prop}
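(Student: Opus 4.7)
The plan is to proceed by induction on $n$, using \cref{slice:reduction:ZZ^*} in the inductive step and handling the base cases $1 \leq n \leq 4$ by the explicit cotowers in \cref{tower:-1:K:Z,tower:-2:K:Z,tower:-3:K:Z,tower:-4:K:Z}. The target formula $\Si{-\frac{n-r}{4}\rho - r} H\ulZ^*$ is set up so that bumping $n$ by $4$ bumps the integer exponent $\frac{n-r}{4}$ by $1$ while preserving $r$, making it a natural fit for a $(-\rho)$-descent.

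For the base cases we have $k = \frac{n-r}{4} = 0$ and $r = n$, so the desired top slice is $\Si{-n} H\ulZ^*$. Each of \cref{tower:-1:K:Z,tower:-2:K:Z,tower:-3:K:Z,tower:-4:K:Z} displays $\Si{-n} H\ulZ^*$ explicitly as the top stage $P^{-n}_{-n}$ of the cotower of $\Si{-n} H\ulZ$, which is exactly the claim.

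For the inductive step, suppose $n \geq 5$ and write $n = 4k + r$ with $1 \leq r \leq 4$, so $n - 4 = 4(k-1) + r$ with the same residue $r$. By the inductive hypothesis, $P^{-(n-4)}_{-(n-4)}(\Si{-(n-4)} H\ulZ) \simeq \Si{-(k-1)\rho - r} H\ulZ^*$. Applying \cref{slice:reduction:ZZ^*} with slice index $-n$, which lies in the allowed range since $n \in [n, 2n-1]$, yields
\[
P^{-n}_{-n}(\Si{-n} H\ulZ) \simeq \Si{-\rho}\, P^{-(n-4)}_{-(n-4)}\bigl(\Si{-(n-4)} H\ulZ\bigr) \simeq \Si{-\rho - (k-1)\rho - r} H\ulZ^* = \Si{-k\rho - r} H\ulZ^*,
\]
as required. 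The genuinely hard step is \cref{slice:reduction:ZZ^*} itself, which does the real work of comparing top slices of $\Si{-n} H\ulZ$ and $\Si{-n} H\ulZ^*$ via Brown-Comenetz duality; given that reduction and the base cases, the induction is purely formal.
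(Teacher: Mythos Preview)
Your proof is correct and follows essentially the same approach as the paper: base cases from the explicit cotowers in \cref{tower:-1:K:Z,tower:-2:K:Z,tower:-3:K:Z,tower:-4:K:Z}, then induction via \cref{slice:reduction:ZZ^*}. You have simply spelled out the inductive step in more detail than the paper, which summarizes it as ``repeated application of \cref{slice:reduction:ZZ^*}.''
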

\begin{proof}
\cref{sec:cotowers:K:Z} gives the result for \(1\leq n\leq 4\), and \(n\geq 5\) follows from repeated application of \cref{slice:reduction:ZZ^*}.
\end{proof}

%%%%%%%%%%%%%%%%%%%%%%%%%%%%%%%%%%%%%
\subsection{The \texorpdfstring{\((-4k)\)}{(-4k)}-slices} \label{-4kslice:K:Z}
%%%%%%%%%%%%%%%%%%%%%%%%%%%%%%%%%%%%%
\phantom{a}

Here we determine the \((-4k)\)-slices of \(\Si{-n} H\ulZ\).

\begin{prop} \label{-4kslices:K:Z}
For \(n\geq 1\), 
\begin{align*}
	P^{-4k}_{-4k} ( \Si{-n} H\ulZ ) \simeq \left\lbrace \begin{array}{ll}
		\Si{-k\rho} H\ul {mg}  & 4k=n+2 \\
		\Si{-k} H\ulg^{\frac{1}{2}(4k-n-1)}  & 
		\begin{aligned}
		4k\in [n+1,2n-2] \\ n\equiv 1\pmod 2
		\end{aligned} \\
		\Si{-k\rho} H(\ulg^{\frac{1}{2}(4k-n+2)-3} \oplus \phi^*_{LDR} \ulF ) & 
		\begin{aligned}
		4k\in[n+3,2n], \\ n\equiv 0\pmod 2
		\end{aligned} \\
		\Si{-k} H\ulg^{n-k+1} & 4k\in[2n+1,4n]
	\end{array}\right.
\end{align*}
\end{prop}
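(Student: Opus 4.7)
My strategy is induction on $n$. The base cases $1 \leq n \leq 5$ are read off directly from the cotower computations in \cref{tower:-1:K:Z}--\cref{tower:-5:K:Z}: each cotower displays the $(-4k)$-slices of $\Si{-n} H\ulZ$, and one simply matches them against the four cases of the proposition (including the identification of $P^{-8}_{-8}(\Si{-6} H\ulZ)$ with $\Si{-2\rho} H\ul{mg}$ to pin down Case 1).

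For the inductive step $n \geq 6$, I would split the range of $k$ according to whether \cref{slice:reduction:ZZ^*} applies. In the upper regime $4k \in [n, 2n-1]$, that lemma yields
\begin{align*}
    P^{-4k}_{-4k}(\Si{-n} H\ulZ) \simeq \Si{-\rho} P^{-4(k-1)}_{-4(k-1)}(\Si{-(n-4)} H\ulZ),
\end{align*}
reducing to the inductive hypothesis for $n-4$. I would then verify case-by-case that $\Si{-\rho}$ applied to each of Cases 1--4 for $n-4$ produces the corresponding case for $n$. Case 1 follows from $\Si{-\rho}\Si{-(k-1)\rho} H\ul{mg} \simeq \Si{-k\rho} H\ul{mg}$; the remaining cases use the Klein-four analogue of the $C_2$ equivalence $\Si{-\rho} H\ulg \simeq \Si{-1} H\ulg$ (cf.\ part (\ref{C2:g:2nslice}) of \cref{C2:equiv}) together with the compatibility of $\Si{-\rho}$ with $\phi^*_{LDR}\ulF$ that is implicit in the cotower computations of \cref{sec:cotowers:K:Z}.

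For the lower regime $4k \in [2n, 4n]$, which lies outside the reach of \cref{slice:reduction:ZZ^*}, I would use the cofiber sequence
\begin{align*}
    \Si{-\rho-n+4} H\ulZ \to \Si{-n} H\ulZ \to \Si{-n} H\ulg
\end{align*}
coming from $\ulZ^* \hookrightarrow \ulZ \twoheadrightarrow \ulg$ combined with the equivalence $\Si{-n} H\ulZ^* \simeq \Si{-\rho-n+4} H\ulZ$ of \cref{equivalence:ZZ*}. Since $\Si{-n} H\ulg$ is a $(-4n)$-slice, it identifies $P^{-4n}_{-4n}$ directly, matching Case 4 at $k=n$. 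For $4k \in [2n, 4n-1]$, the intermediate slices receive contributions both from the main cotower and from the nested cotowers for $\Si{-n} H\ul m^*$, $\Si{-\rho-n+4} H\ul m^*$, and so on, each depositing a $\ulg$-summand at the appropriate level, precisely as illustrated by \cref{tower:-7:K:Z}.

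I expect the main obstacle to be verifying that these multiplicities sum to exactly $n-k+1$: this is a careful combinatorial bookkeeping across all layers of the recursive cotower hierarchy, compounded by the fact that a single $(-4k)$-slice typically collects $\ulg$'s from multiple depths (as in the collision $\Si{-4}H\ulg \vee \Si{-4}H\ulg^3 \simeq \Si{-4}H\ulg^4$ at $P^{-16}_{-16}(\Si{-7} H\ulZ)$). A secondary subtlety is confirming that the $\phi^*_{LDR}\ulF$ summand in Case 3 lands only at the predicted positions $4k \in [n+3, 2n]$ for $n$ even, traced back to the specific layer of the $\ul m^*$-cotower that first introduces it (cf.\ \cref{tower:-3:K:Z}).
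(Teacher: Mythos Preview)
Your upper-regime argument via \cref{slice:reduction:ZZ^*} is correct and matches the paper. The paper, however, takes a very different and much simpler route for the lower regime, and in fact for Cases 2 and 4 altogether. The key observation you are missing is that by \cref{rho:susp:commutes} and \cref{charac:slice:01-1}(1),
\[
P^{-4k}_{-4k}(\Si{-n} H\ulZ) \;\simeq\; \Si{-k\rho} P^{0}_{0}(\Si{-n+k\rho} H\ulZ) \;\simeq\; \Si{-k\rho} H\,\ul\pi_{n}(\Si{k\rho} H\ulZ),
\]
so the entire problem reduces to identifying the single homotopy Mackey functor $\ul\pi_n(\Si{k\rho} H\ulZ)$. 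Its values at $L$, $D$, $R$, $e$ are determined by the $C_2$-computation \cref{ksigma+r:homotopy:C2:Z}, and its value at $\K$ is computed in \cref{krho:fixedpts:K:Z}. For $n\in[k,2k-1]$ the cyclic-subgroup values vanish, so the Mackey functor is forced to be $\ulg^{n-k+1}$; similarly for $n$ odd in $[2k+1,4k-3]$. This completely bypasses the recursive cotower bookkeeping you identify as the main obstacle.

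The one case where knowing the groups does \emph{not} determine the Mackey functor is $4k=2n$ with $n$ even (the top of Case~3), since there the cyclic-subgroup values are $\F$ and one must pin down the restriction and transfer maps. The paper handles this by a direct argument: using the cofiber sequences for $S^{\pm\beta}$ together with \cref{krho-beta:fixedpoints:K:Z} and \cref{krho+beta:fixedpoints:K:Z} to show the transfers $\tr{\K}{H}$ vanish and the restrictions $\res{\K}{H}$ are surjective with pairwise distinct kernels, forcing $\ulg^{(n-4)/2}\oplus\phi^*_{LDR}\ulF$. Your inductive scheme does not isolate this case, and the vague appeal to ``compatibility of $\Si{-\rho}$ with $\phi^*_{LDR}\ulF$'' would not by itself establish the Mackey structure at the boundary $4k=2n$.
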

\begin{proof}
We have the equivalence
\begin{align*}
    P^{-4k}_{-4k}(\Si{-n} H\ulZ)  \simeq \Si{-k\rho} H\ul\pi_0( \Si{-n+k\rho} H\ulZ).
\end{align*}
The restriction to each cyclic subgroup agrees with the slices found in \cref{tower:C2:Z:-n} and \cref{krho:fixedpts:K:Z} gives the fixed points. All that remains is to verify the result for \(4k\in[n+2,2n]\) with \(n\) even. For \mbox{\(4k\in[n+2,2n)\)}, this follows from \cref{tower:-1:K:Z}, \cref{tower:-2:K:Z}, \cref{tower:-3:K:Z}, \cref{tower:-4:K:Z}, and \cref{slice:reduction:ZZ^*}.

Now let \(4k=2n\). The transfer \(\tr{\K}{L}\) fits in the cofiber sequence
\begin{equation*}
\begin{tikzcd}
    (\Si{-n+k\rho} H\ulZ)^L \arrow[r, "\tr{\K}{L}"] & (\Si{-n+k\rho} H\ulZ)^\K \arrow[r] & (\Si{-n+k\rho+\beta} H\ulZ)^\K.
\end{tikzcd}
\end{equation*}
This gives the exact sequence
\begin{equation} \label{transfer:fixedpts:proof:Z:-4k}
\begin{tikzcd}
	\pi_n^L( \Si{k\rho} H\ulZ ) \arrow[r, "\tr{\K}{L}"] & \pi_n^\K(\Si{k\rho} H\ulZ) \arrow[r] & \pi_n^\K (\Si{k\rho+\beta} H\ulZ).
\end{tikzcd}
\end{equation}

We wish to show this transfer is trivial. By \cref{krho:fixedpts:K:Z} and \cref{krho+beta:fixedpoints:K:Z},  \cref{transfer:fixedpts:proof:Z:-4k} becomes
\begin{equation*}
\begin{tikzcd}
    \F \arrow[r, "\tr{\K}{L}"] & \F^{\frac{1}{2}(n+2)} \arrow[r, two heads] & \F^{\frac{1}{2}(n+2)} \arrow[r] & 0 = \pi_{n-1}^L( \Si{k\rho} H\ulZ).
\end{tikzcd}
\end{equation*}
Thus, \(\tr{\K}{L}\) must be zero. A similar argument shows that \(\tr{\K}{R}\) and \(\tr{\K}{D}\) are trivial as well.

The restriction \(\res{\K}{L}\) fits into the cofiber sequence
\begin{equation*}
\begin{tikzcd}
    ( \Si{-n+k\rho-\beta} H\ulZ)^\K \arrow[r] & ( \Si{-n+k\rho} H\ulZ)^\K \arrow[r, "\res{\K}{L}"] & ( \Si{-n+k\rho} H\ulZ)^L
\end{tikzcd}
\end{equation*}
This gives the exact sequence
\begin{equation} \label{res:fixedpts:proof:Z:-4k}
\begin{tikzcd}
	\pi_n^\K( \Si{k\rho-\beta} H\ulZ ) \arrow[r] & \pi_n^\K(\Si{k\rho} H\ulZ) \arrow[r, "\res{\K}{L}"] & \pi_n^L(\Si{k\rho} H\ulZ).
\end{tikzcd}
\end{equation}

We wish to show this restriction is surjective. By \cref{krho:fixedpts:K:Z} and \cref{krho-beta:fixedpoints:K:Z},  \cref{res:fixedpts:proof:Z:-4k} becomes
\begin{equation*}
\begin{tikzcd}
    \pi_{n+1}^L( \Si{k\rho} H\ulZ ) = 0 \arrow[r] & \F^{\frac{1}{2}(n)} \arrow[r, hook] & \F^{\frac{1}{2}(n)+1} \arrow[r, "\res{\K}{L}"] & \F
\end{tikzcd}
\end{equation*}
Thus, \(\res{\K}{L}\) must be surjective. A similar argument shows that \(\res{\K}{R}\) and \(\res{\K}{D}\) are surjective as well.

All that remains to be shown is that the restrictions have distinct kernels. Consider the cofiber sequence
\begin{equation*}
\begin{tikzcd}
    \Si{-n+k\rho-\beta} H\ulZ \arrow[r] & \Si{-n+k\rho} H\ulZ \arrow[r] & K/L_+ \wedge \Si{-n+k\rho} H\ulZ.
\end{tikzcd}
\end{equation*}

This results in the exact sequence
\begin{equation} \label{exactseq:-4kproof:K:Z}
\begin{tikzcd}
    \ul\pi_{n} ( \Si{k\rho-\beta} H\ulZ ) \arrow[r] & \ul\pi_{n} ( \Si{k\rho} H\ulZ) \arrow[r] & \uparrow^\K_L \downarrow^\K_L \ul\pi_{n} (\Si{k\rho} H\ulZ )
\end{tikzcd}
\end{equation}

which by \cref{ksigma+r:homotopy:C2:Z}, \cref{krho:fixedpts:K:Z}, and \cref{krho-beta:fixedpoints:K:Z} is
\begin{equation*}
\begin{tikzcd}[scale cd = 0.85,column sep = small]
	%%% begin first row
	% column 1
	&&  \F^{\frac{n}{2}} \arrow[rrr, "\varphi"] 
	\arrow[d, \resC] \arrow[dr, \resC]
	&&[+15pt] 
	% column 2
	& \F^{\frac{n}{2}+1} \arrow[rrr, "\res{\K}{L}"]
	\arrow[d, \resC] \arrow[dr, \resC] \arrow[dl, \resC] 
	&& [+15pt]
	% column 3
	& \F
	\arrow[dl, \resC, bend right = 10]
	&&  \\ [-5pt]
	% end first row
	%%% begin second row
	% column 1
	& 0
	& \F	
	& \F \arrow[r, "0 \text{ } 1 \text{ } 1"] 
	% column 2
	& \F 	
	& \F
	& \F \arrow[r, "\Delta \text{ } 0 \text{ } 0"]
	% column 3
	& 
	\F[\K/L]
	\arrow[ur, \trC, bend right = 10]
	& 0
	& 0
	&  \\ [-5pt]
	% end second row
	%%% begin third row
	&& 0 \arrow[rrr, "0"]
	&&
	% column 2
	& 0 \arrow[rrr, "0"] 
	&&
	& 0 
	&& 
\end{tikzcd}
\end{equation*}	

Each restriction is surjective with kernel of rank \(\frac{n}{2}\). As 
\begin{align*}
    \dim \pi_{n}^\K ( \Si{k\rho} H\ulZ) = \frac{n}{2}+1,
\end{align*}
it is sufficient to show that the kernels are pairwise distinct.

Because the diagram on the left commutes, we find that
\begin{align*}
    \im\varphi \cap \ker(\res{\K}{R}) = \{0\} \qquad \text{and} \qquad \im\varphi \cap \ker(\res{\K}{D}) = \{0\}.
\end{align*}
As \(\im\varphi = \ker(\res{\K}{L})\), we have that \(\ker(\res{\K}{L})\) is distinct from \(\ker(\res{\K}{R})\) and \(\ker(\res{\K}{D})\). Replacing \(\beta\) by \(\alpha\) shows that \(\ker(\res{\K}{R})\) and \(\ker(\res{\K}{D})\) are distinct as well.
\end{proof}

\begin{prop} \label{krho:fixedpts:K:Z}
For \(G=\K\) and \(k\geq 1\),
\begin{align*}
	\pi_i^\K( \Si{k\rho} H\ulZ) &= \left\lbrace \begin{array}{ll}
		\Z & i=4k \\
		\F^{\frac{1}{2}(4k-1-i)} & i\in [2k+1, 4k-1] \text{ odd} \\
		\F^{\frac{1}{2}(4k+2-i)} & i\in[2k+2, 4k-2] \text{ even} \\
		\F^{i-k+1} & i\in[k,2k]
	\end{array}\right.
\end{align*}
\end{prop}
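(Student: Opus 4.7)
My plan is to proceed by induction on $k \geq 1$. The inductive step is driven by the cofiber sequence
\begin{align*}
	\Si{(k-1)\rho+4}H\ulZ \to \Si{k\rho}H\ulZ \to \Si{k\rho}H\ulM,
\end{align*}
obtained by applying $\Si{k\rho}$ to the short exact sequence of Mackey functors $0 \to \ulZ^* \to \ulZ \to \ulM \to 0$ (cf.\ \cref{tab-newKMackey}) and then invoking the equivalence $\Si{\rho}H\ulZ^* \simeq \Si{4}H\ulZ$, which is a rearrangement of \cref{equivalence:ZZ*}. The inductive hypothesis then supplies $\pi_i^\K$ of the first term as a degree-$4$ shift of the formula at level $k-1$.

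For the base case $k=1$, I would compute $\pi_i^\K(\Si{\rho}H\ulZ)$ directly in each relevant degree $1 \leq i \leq 4$. Using $\rho = 1 + \alpha + \beta + \gamma$, I build $\Si{\rho}H\ulZ$ from $H\ulZ$ by iteratively suspending by a single sign representation via the cofiber sequence $\K/H_+ \wedge Y \to Y \to \Si{\sigma^H}Y$. At each stage, $\pi_i^\K(\K/H_+ \wedge Y) = \pi_i^H(Y)$ is a $C_2$-computation furnished by \cref{ksigma+r:homotopy:C2:Z} together with the identity $i_H^*(k\rho_\K) = 2k\rho_H$. The resulting long exact sequences determine $\pi_*^\K$ at each stage up to extensions; the $\F$-extensions split for rank reasons, while the $\Z$ class in the top degree is pinned down by matching the underlying homotopy of $\Si{\rho}H\ulZ$.

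The main auxiliary computation is $\pi_*^\K(\Si{k\rho}H\ulM)$, which I decompose further via the short exact sequence $0 \to \ulg^1 \to \ulM \to \ul{m} \to 0$. Since $\ulg^1 \cong \phi^*_\K \F$ is pulled back from the trivial group, the $\K$-fixed points of $\Si{k\rho}H\ulg^1$ are computed by geometric fixed points and contribute a single $\F$ in degree $k$. For $\ul{m}$, the equivalence $\Si{-\rho}H\ul{m} \simeq \Si{-2}H\ul{mg}^*$ from \cref{m:mg:equiv} reduces the problem to a $\Si{(k+1)\rho-2}H\ul{mg}^*$ computation, accessible by a parallel induction. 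The main obstacle will be the bookkeeping: tracking ranks of $\F$-vector spaces across the many long exact sequences and resolving the potential extension problems. The ``geometric'' range $i \in [k, 2k]$, where the formula predicts the linearly growing count $\F^{i-k+1}$, is the most delicate, since the restrictions to $L, D, R$ contribute only bounded counts while the $\K$-level rank grows; verifying that the $\ulg^1$-contribution and the accumulated inductive input from $\Si{(k-1)\rho+4}H\ulZ$ combine to give exactly this count will require careful accounting.
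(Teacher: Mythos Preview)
Your approach via the cofiber sequence $\Si{(k-1)\rho+4}H\ulZ \to \Si{k\rho}H\ulZ \to \Si{k\rho}H\ulM$ is genuinely different from the paper's, which instead builds $\Si{(k+1)\rho}$ from $\Si{k\rho}$ one sign representation at a time (to obtain upper bounds on $2$-ranks) and then uses the multiplication-by-$2$ sequence $\Si{k\rho}H\ulZ \xrightarrow{2} \Si{k\rho}H\ulZ \to \Si{k\rho}H\ulF$ together with the known values of $\pi_*^\K(\Si{k\rho}H\ulF)$ from \cite{GY} to pin the ranks down exactly.

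In the range $i \geq 2k+1$ your sequence works cleanly: $\pi_i^\K(\Si{k\rho}H\ulM)$ vanishes there, so $\pi_i^\K(\Si{k\rho}H\ulZ) \cong \pi_{i-4}^\K(\Si{(k-1)\rho}H\ulZ)$ and the inductive formula carries over verbatim. The problem is the range $i \in [k, 2k]$. Even granting $\pi_i^\K(\Si{k\rho}H\ulM) \cong \F^3$ for $i \in [k+1, 2k]$ (which itself requires the $\ul m$-computation from \cite{GY}, not a self-contained ``parallel induction''), your long exact sequence near such an $i$ reads
\[
\F^3 \longrightarrow \F^{\,i-k-2} \longrightarrow \pi_i^\K(\Si{k\rho}H\ulZ) \longrightarrow \F^3 \longrightarrow \F^{\,i-k-3},
\]
so writing $r_i$ for the rank of the connecting map $\pi_i^\K(\Si{k\rho}H\ulM) \to \pi_{i-1}^\K(\Si{(k-1)\rho+4}H\ulZ)$ you get only $\dim \pi_i^\K(\Si{k\rho}H\ulZ) = (i-k+1) - r_i - r_{i+1}$. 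Rank counting thus gives the \emph{upper} bound $i-k+1$ but nothing in your outline forces the $r_i$ to vanish; since every Mackey functor in sight is concentrated at $\K/\K$ in this range, restriction to the cyclic subgroups yields no leverage either. The same issue recurs one layer down in your computation of $\pi_*^\K(\Si{k\rho}H\ulM)$ from the sequence $\ulg \to \ulM \to \ul m$ at $i \in \{k, k+1\}$. The paper's $\ulF$-input is precisely what supplies the missing lower bound: once $2$-torsion is established, the mult-by-$2$ sequence gives $\dim \pi_i^\K(\Si{k\rho}H\ulF) = \dim \pi_i^\K(\Si{k\rho}H\ulZ) + \dim \pi_{i-1}^\K(\Si{k\rho}H\ulZ)$ with the left side already known. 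Without an analogous independent input, your argument does not close.
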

\begin{proof}
Note that
\begin{align*}
    P^{-4k}_{-4k}(\Si{-4k} H\ulZ) &\simeq \Si{-k\rho} H\ul\pi_{4k}(\Si{k\rho} H\ulZ).
\end{align*}
Thus, by \cref{-nslice:K:Z}, \(H\ul\pi_{4k}(\Si{k\rho} H\ulZ) \simeq H\ulZ\). So the result holds for \(i=4k\) and we only need consider \(i\leq 4k-1\).

We will use the resulting long exact sequences in homotopy resulting from the cofiber sequences
\begin{align}
    (\K/L_+ &\rightarrow S^0 \rightarrow S^\beta) \wedge \Si{k\rho+1} H\ulZ  \label{cofib:beta} \\
    (\K/R_+ &\rightarrow S^0 \rightarrow S^\alpha) \wedge \Si{k\rho+\beta+1} H\ulZ \label{cofib:alpha} \\
    (\K/D_+ &\rightarrow S^0 \rightarrow S^\gamma) \wedge \Si{k\rho+\alpha+\beta+1} H\ulZ \label{cofib:gamma} \\
    \Si{k\rho} H\ulZ &\xrightarrow{2}{} \Si{k\rho} H\ulZ \rightarrow \Si{k\rho} H\ulF \label{cofib:mult2}
\end{align}
where \cref{cofib:mult2} is induced by the short exact sequence of Mackey functors \(\ulZ \xrightarrow{2}{} \ulZ \rightarrow \ulF\). 

For \(k=0\), \cref{cofib:beta} - \cref{cofib:gamma} provide that
\begin{align*} 
\ul\pi_n(\Si{\rho} H\ulZ) &= \left\lbrace \begin{array}{cc}
   \ulZ & n=4 \\
   \ul 0 & n=3 \\
   \ul{mg} & n=2 \\
   \ulg & n=1
\end{array}\right.
\end{align*}

Consequently, the result holds for \(k=1\). We now proceed by induction on \(k\). Assume the result holds for \(k\). By restriction to \(L\), \(D\), and \(R\), we find that \(\ul\pi_i(\Si{(k+1)\rho} H\ulZ)\) is a pullback over \(\K\) for \(i\leq 4k+3\) odd, and consequently, 2-torsion as in \cite[Remark 2.13]{Z}.

In the long exact sequence of fixed points resulting from \cref{cofib:beta} - \cref{cofib:gamma}, we have at the \(\K/\K\) level,
\begin{equation*}
\begin{tikzcd}[scale cd = 0.9]
    %%% row 1 - first les
    \pi_i^{C_2} (\Si{2k\rho_{C_2}+1} H_{C_2}\ulZ) \arrow[r] &[-10pt] \pi_i^\K( \Si{k\rho+1} H\ulZ) \arrow[r] &[-10pt] \pi_{i}^\K(\Si{k\rho+\beta + 1} H\ulZ) \\
    %%% row 2 - second les
    \pi_i^{C_2} (\Si{(2k+1)\rho_{C_2}} H_{C_2}\ulZ) \arrow[r] & \pi_i^\K( \Si{k\rho+\beta+1} H\ulZ) \arrow[r] & \pi_{i}^\K(\Si{k\rho+\alpha+\beta +1 } H\ulZ) \\
    %%% row 3 - third les
    \pi_i^{C_2} (\Si{(2k+2)\rho_{C_2}-1} H_{C_2}\ulZ) \arrow[r] & \pi_i^\K( \Si{k\rho+\alpha+\beta+ 1} H\ulZ) \arrow[r] & \pi_{i}^\K(\Si{(k+1)\rho} H\ulZ)
\end{tikzcd}
\end{equation*}

When \(2k+1\leq i \leq 4k+1\), by \cref{ksigma+r:homotopy:C2:Z},
\begin{align*}
    \pi_i^{C_2} (\Si{2k\sigma + 2k+1} H_{C_2}\ulZ) &= \left\lbrace \begin{array}{ll}
        \F &  i \text{ odd} \\
        0 & i \text{ even}
    \end{array}\right.
\end{align*}

Now for \(i\) even, we have in our long exact sequence,
\begin{equation*}
\begin{tikzcd}
    0 \ar[r] & \pi_i^\K( \Si{k\rho+1} H\ulZ) \ar[r, hook] & \pi_{i}^\K(\Si{k\rho+\beta + 1} H\ulZ) \ar[r] & \phantom{a} \\
    \F \ar[r] & \pi_{i-1}^\K( \Si{k\rho+1} H\ulZ) \ar[r, twoheadrightarrow] & \pi_{i-1}^\K(\Si{k\rho+\beta + 1} H\ulZ) \ar[r, "0", twoheadrightarrow] & \phantom{a}
\end{tikzcd}
\end{equation*}

Consequently, when \(i\) is even,
\begin{align*}
    \text{2-rk } \pi_{i}^\K(\Si{k\rho+\beta + 1} H\ulZ) \leq \text{2-rk } \pi_i^\K( \Si{k\rho+1} H\ulZ) + 1,
\end{align*}
where equality occurs if \(\pi_i^\K( \Si{k\rho+\beta + 1} H\ulZ) = \pi_i^\K( \Si{(k\rho+1} H\ulZ) \oplus \F\).

And, when \(i\) is odd,
\begin{align*}
    \text{2-rk } \pi_{i}^\K(\Si{k\rho+\beta + 1} H\ulZ) \leq \text{2-rk } \pi_i^\K( \Si{k\rho+1} H\ulZ).
\end{align*}

For \(i\leq 2k\) we have
\begin{equation*}
\begin{tikzcd}
    0 \ar[r] & \pi_{i}^\K( \Si{k\rho+1} H\ulZ) \ar[r, hook, two heads] & \pi_{i}^\K(\Si{k\rho+\beta + 1} H\ulZ) \ar[r, "0", twoheadrightarrow] & \phantom{a}    
\end{tikzcd}
\end{equation*}

Thus, for \(i\leq 2k\),
\begin{align*}
    \pi_{i}^\K(\Si{k\rho+\beta + 1} H\ulZ) \cong  \pi_{i}^\K( \Si{k\rho+1} H\ulZ).
\end{align*}

A similar statement holds for 
\begin{align*}
    \text{2-rk } \pi_{i}^\K(\Si{ k\rho+\alpha+\beta +1} H\ulZ) \qquad \text{ and } \qquad \text{2-rk } \pi_{i}^\K(\Si{(k+1)\rho} H\ulZ).
\end{align*}

From \cite[Corollary 7.2]{GY}, in the long exact sequence resulting from \cref{cofib:mult2}, we have
\begin{align*}
    \F^l \cong \pi_{2i+1}^\K(\Si{(i+1)\rho} H\ulZ) &\xrightarrow{0} \pi_{2i+1}^\K(\Si{(i+1)\rho} H\ulZ) \hookrightarrow \pi_{2i+1}^\K(\Si{(i+1)\rho} H\ulF) \cong \F^{2i+1} \\ 
    &\twoheadrightarrow \pi_{2i}^\K(\Si{(i+1)\rho} H\ulZ) \cong \pi_{2i-1}^\K(\Si{i\rho} H\ulZ) \cong \F^{i}.
\end{align*}

Consequently, \(l=i+1\) and \(\pi_{2i+1}^\K(\Si{(i+1)\rho} H\ulZ)\cong \F^{i+1}\). We then have in our long exact sequence
\begin{align*}
    \pi_{2i+2}^\K(\Si{(i+1)\rho} H\ulZ) \rightarrow \pi_{2i+2}^\K(\Si{(i+1)\rho} H\ulF) \cong \F^{2i+3} \twoheadrightarrow \F^{i+1} \cong \pi_{2i+1}^\K(\Si{(i+1)\rho} H\ulZ).
\end{align*}

Thus,
\begin{align*}
    \text{2-rk } \pi_{2i+2}^\K(\Si{(i+1)\rho} H\ulZ) \geq i+2 = \text{2-rk } \pi_{2i+1}^\K(\Si{i\rho} H\ulZ) + 3.
\end{align*}

We achieve the maximum bound for \(\text{2-rk } \pi_{2i+2}^\K(\Si{(i+1)\rho} H\ulZ)\); thus,
\begin{align*}
    \pi_{2i+2}^\K(\Si{(i+1)\rho} H\ulZ) \cong \pi_{2i+1}^\K(\Si{i\rho} H\ulZ) \oplus \F^3 \cong \F^{i+2}.
\end{align*}

The rest of the result now follows from this long exact sequence in a similar manner.
\end{proof}

\begin{prop} \label{krho-beta:fixedpoints:K:Z}
For \(G=\K\) and \(k\geq 1\),
\begin{align*}
    \pi_i^\K( \Si{k\rho-\beta} H\ulZ) &= \left\lbrace \begin{array}{ll}
    \F^{\frac{1}{2}(4k-i)} & i\in[2k, 4k-2] \text{ even} \\
    \F^{\frac{1}{2}(4k-i-1)} & i\in[2k-1, 4k-3] \text{ odd} \\
    \F^{i-k+1} & i\in[k,2k]
    \end{array}\right.
\end{align*}
\end{prop}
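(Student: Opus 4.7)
The plan is to derive this from \cref{krho:fixedpts:K:Z} by smashing the standard cofiber sequence $S^{-\beta}\to S^0\to \K/L_+$ with $\Si{k\rho} H\ulZ$, yielding
\[
\Si{k\rho-\beta} H\ulZ \to \Si{k\rho} H\ulZ \to \K/L_+\wedge \Si{k\rho} H\ulZ.
\]
Taking $\K$-fixed points and using $(\K/L_+\wedge X)^\K\simeq X^L$ gives the long exact sequence
\[
\cdots \to \pi_{i+1}^L(\Si{k\rho} H\ulZ) \to \pi_i^\K(\Si{k\rho-\beta} H\ulZ) \to \pi_i^\K(\Si{k\rho} H\ulZ) \xrightarrow{\res{\K}{L}} \pi_i^L(\Si{k\rho} H\ulZ) \to \cdots,
\]
with the indicated arrow being the restriction.

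Next I would assemble the two inputs. Since $\rho|_L=2\rho_{C_2}$, applying \cref{ksigma+r:homotopy:C2:Z} with $k\mapsto 2k$ and $r=2k$ gives $\pi_i^L(\Si{k\rho} H\ulZ)=\Z$ at $i=4k$, equals $\F$ for $i$ even in $[2k,4k-2]$, and is $0$ otherwise. Combined with \cref{krho:fixedpts:K:Z}, a degree-by-degree chase determines $\pi_i^\K(\Si{k\rho-\beta} H\ulZ)$ up to the ranks of certain restriction maps. At the top $i=4k$, the fact that $\ul\pi_{4k}(\Si{k\rho} H\ulZ)=\ulZ$ (from $\Si{-k\rho} H\ulZ$ being the top $(-4k)$-slice in \cref{-nslice:K:Z}) makes $\res{\K}{L}\colon \Z\to\Z$ the identity, forcing $\pi_{4k}^\K(\Si{k\rho-\beta} H\ulZ)=0$ and $\pi_{4k-1}^\K(\Si{k\rho-\beta} H\ulZ)=0$ as required. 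In the low range $i\in[k,2k-1]$ the group $\pi_i^L(\Si{k\rho} H\ulZ)$ vanishes, so the LES gives an isomorphism $\pi_i^\K(\Si{k\rho-\beta} H\ulZ)\cong \pi_i^\K(\Si{k\rho} H\ulZ)=\F^{i-k+1}$; at the overlap $i=2k$ the even-range formula $\F^k$ takes precedence.

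The main obstacle is proving that $\res{\K}{L}\colon \pi_i^\K(\Si{k\rho} H\ulZ)\to \pi_i^L(\Si{k\rho} H\ulZ)=\F$ is surjective for each even $i\in[2k,4k-2]$; granting this, the rank counts in the LES give the predicted $\F^{(4k-i)/2}$ in even degrees and $\F^{(4k-i-1)/2}$ in odd degrees. My plan for the surjectivity is to revisit the inductive argument used in the proof of \cref{krho:fixedpts:K:Z}: the new $\F$-summands appearing in the $\K$-fixed points come from the same cofiber sequences $(\K/H_+\to S^0\to S^{\alpha,\beta,\gamma})\wedge \Si{k\rho} H\ulZ$ that generate the $\F$-summands of the $L$-fixed points, and naturality of these sequences makes restriction hit the corresponding generator in each even degree. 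A cleaner alternative is to use the mod-$2$ reduction $\Si{k\rho} H\ulZ\xrightarrow{2}\Si{k\rho} H\ulZ\to \Si{k\rho} H\ulF$ and transport the surjectivity of $\res{\K}{L}$ on $\pi_\ast^\K(\Si{k\rho} H\ulF)$, known from \cite[Corollary 7.2]{GY}, back along the reduction.
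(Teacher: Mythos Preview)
Your approach differs from the paper's: the paper simply says ``similar argument as in \cref{krho:fixedpts:K:Z}'', meaning one redoes the full induction on $k$ for $\Si{k\rho-\beta}H\ulZ$ using the same cofiber sequences and the comparison with $H\ulF$. Your route via the single cofiber sequence $\Si{k\rho-\beta}H\ulZ \to \Si{k\rho}H\ulZ \to \K/L_+\wedge\Si{k\rho}H\ulZ$ is more economical and correctly reduces everything to the surjectivity of $\res{\K}{L}$ on $\pi_i^\K(\Si{k\rho}H\ulZ)$ for each even $i\in[2k,4k-2]$.

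The one soft spot is at $i=2k$. For even $i\in[2k+2,4k-2]$ the surjectivity can be read off non-circularly from the Mackey functors $\ul{mg}$ and $\ulg^j\oplus\phi^*_{LDR}\ulF$ established in \cref{-4kslices:K:Z} (those cases rely on \cref{slice:reduction:ZZ^*} and the explicit towers, not on the present proposition). But at $i=2k$ --- the case $4k=2n$ in \cref{-4kslices:K:Z} --- the paper \emph{uses} \cref{krho-beta:fixedpoints:K:Z} to prove that $\res{\K}{L}$ is surjective, so citing that is circular. Your proposal (2) does not go through formally: in the short exact sequence of Mackey functors $\ul\pi_{2k}(\Si{k\rho}H\ulZ)\hookrightarrow\ul\pi_{2k}(\Si{k\rho}H\ulF)\twoheadrightarrow\ulg^k$ the quotient is concentrated at $\K/\K$, so a preimage in $\pi_{2k}^\K(\Si{k\rho}H\ulF)$ of a class in $\pi_{2k}^L$ need not lie in the image of $\pi_{2k}^\K(\Si{k\rho}H\ulZ)$. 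Your proposal (1) would work, but it amounts to running part of the paper's induction, so the shortcut is less short than it first appears.
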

\begin{proof}
This follows from a similar argument as in  \cref{krho:fixedpts:K:Z}.
\end{proof}

\begin{prop} \label{krho+beta:fixedpoints:K:Z}
For \(G=\K\) and \(k\geq 1\),
\begin{align*}
    \pi_i^\K( \Si{k\rho+\beta} H\ulZ) &= \left\lbrace \begin{array}{ll}
    \F^{\frac{1}{2}(4k-i)+1} & i\in[2k+2, 4k] \text{ even} \\
    \F^{\frac{1}{2}(4k-i+1)} & i\in[2k+1, 4k-1] \text{ odd} \\
    \F^{i-k+1} & i\in[k,2k]
    \end{array}\right.
\end{align*}
\end{prop}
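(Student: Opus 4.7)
My plan is to mimic the inductive $2$-rank argument of \cref{krho:fixedpts:K:Z}, swapping its cofiber sequences for the single cofiber sequence
\[
\K/L_+ \wedge \Si{k\rho} H\ulZ \to \Si{k\rho} H\ulZ \to \Si{k\rho+\beta} H\ulZ.
\]
Taking $\K$-fixed-point homotopy, using $(\K/L_+ \wedge X)^\K \simeq X^L$ and the restriction $i_L^*(\Si{k\rho} H\ulZ) \simeq \Si{2k\rho_{C_2}} H_{C_2}\ulZ$, yields the long exact sequence
\[
\pi_i^{C_2}(\Si{2k\rho_{C_2}} H_{C_2}\ulZ) \xrightarrow{\tr{\K}{L}} \pi_i^\K(\Si{k\rho} H\ulZ) \to \pi_i^\K(\Si{k\rho+\beta} H\ulZ) \to \pi_{i-1}^{C_2}(\Si{2k\rho_{C_2}} H_{C_2}\ulZ).
\]
By \cref{ksigma+r:homotopy:C2:Z} (with $k \leadsto 2k$ and $r \leadsto 2k$) the outer groups are $\F$ for $i \in [2k,4k-2]$ even, $\Z$ at $i=4k$, and $0$ otherwise in the relevant range, while \cref{krho:fixedpts:K:Z} supplies the central group.

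Next I would split into three ranges. For $i \in [k,2k]$ both flanking $L$-level groups vanish, so $\pi_i^\K(\Si{k\rho+\beta} H\ulZ) \cong \pi_i^\K(\Si{k\rho} H\ulZ) \cong \F^{i-k+1}$. For $i$ in the upper two ranges, the short exact sequence
\[
0 \to \operatorname{coker}\bigl(\tr{\K}{L}\bigr)_i \to \pi_i^\K(\Si{k\rho+\beta} H\ulZ) \to \ker\bigl(\tr{\K}{L}\bigr)_{i-1} \to 0
\]
is an extension of $\F$-vector spaces (hence split), provided I verify that $\tr{\K}{L}$ is zero in degrees below $4k$ and is multiplication by two at $i = 4k$. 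The first follows because every relevant summand of $\pi_i^\K(\Si{k\rho} H\ulZ)$ below the top arises from a Mackey functor of $\ulg$-type (with vanishing transfers), while the second is just the transfer on the constant Mackey functor $\ulZ$ at index two. Counting $2$-ranks then gives $\F^{(4k-i)/2+1}$ for even $i \in [2k+2,4k]$ and $\F^{(4k-i+1)/2}$ for odd $i \in [2k+1,4k-1]$, matching the claim.

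The main obstacle is the same as in \cref{krho:fixedpts:K:Z}: ruling out hidden $\Z$-summands and non-split extensions, in particular verifying that $\pi_{4k}^\K(\Si{k\rho+\beta} H\ulZ)$ really is $\F$ rather than some extension involving $\Z$. I would handle this exactly as in \cref{krho:fixedpts:K:Z}, by running in parallel the cofiber sequence $\Si{k\rho+\beta} H\ulZ \xrightarrow{2} \Si{k\rho+\beta} H\ulZ \to \Si{k\rho+\beta} H\ulF$ and invoking \cite[Corollary 7.2]{GY} for the mod-$2$ groups, together with the observation that $i_L^*(\Si{k\rho+\beta} H\ulZ) \simeq \Si{2k\rho_{C_2}+1} H_{C_2}\ulZ$ has purely $\F$-torsion homotopy by \cref{ksigma+r:homotopy:C2:Z}. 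This pins down every $2$-rank and confirms that each group in question is an $\F$-vector space of the claimed dimension.
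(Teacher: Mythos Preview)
Your outline matches the paper's intended argument (which simply reads ``this follows from a similar argument as in \cref{krho:fixedpts:K:Z}''): feed the known values of $\pi_*^\K(\Si{k\rho}H\ulZ)$ and of the $C_2$-restriction into the long exact sequence coming from $(\K/L_+\to S^0\to S^\beta)\wedge \Si{k\rho}H\ulZ$, obtain $2$-rank bounds, and then use the mod-$2$ comparison to sharpen them.

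One genuine circularity to flag: your claim that $\tr{\K}{L}$ vanishes below degree $4k$ because ``every relevant summand \dots arises from a Mackey functor of $\ulg$-type'' appeals to the full Mackey-functor identification of $\ul\pi_i(\Si{k\rho}H\ulZ)$. That identification is \cref{htpy:krho:Z}, which is a restatement of \cref{-4kslices:K:Z}, and the proof of \cref{-4kslices:K:Z} explicitly invokes \cref{krho+beta:fixedpoints:K:Z} to handle the case $4k=2n$ --- i.e.\ precisely the transfer at $i=2k$. So you cannot cite the Mackey-functor structure here. The non-circular route is the one you list second: extract only upper bounds on $2$-ranks from the long exact sequence, and then force equality via the $\times 2$ cofiber sequence together with \cite[Corollary~7.2]{GY}, exactly as in \cref{krho:fixedpts:K:Z}. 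Two smaller corrections in the same vein: at $i=2k$ the left flanking group $\pi_{2k}^{C_2}(\Si{2k\rho_{C_2}}H_{C_2}\ulZ)$ is $\F$, not zero, so that boundary case already requires the transfer analysis rather than the ``both flanks vanish'' shortcut; and the phrase ``extension of $\F$-vector spaces (hence split)'' is not valid on its own in the category of abelian groups --- it is again the mod-$2$ comparison that rules out $\Z/4$ pieces.
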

\begin{proof}
This follows from a similar argument as in  \cref{krho:fixedpts:K:Z}.
\end{proof}

%%%%%%%%%%%%%%%%%%%%%%%%%%%%%%%%%%%%%
\subsection{The \texorpdfstring{\((-4k-2)\)}{(-4k-2)}-slices} \label{-4k-2slice:K:Z}
%%%%%%%%%%%%%%%%%%%%%%%%%%%%%%%%%%%%%
\phantom{a}

We now determine the \((-4k-2)\)-slices of \(\Si{-n} H\ulZ\).

\begin{prop} \label{-4k-2slices:K:Z}
For even \(n\), \(\Si{-n} H\ulZ\) has no \((-4k-2)\)-slices, except for possibly the \(-n\) slice.
For odd \(n\geq 1\),
\begin{align*}
	P^{-4k-2}_{-4k-2} ( \Si{-n} H\ulZ ) &\simeq \left\lbrace
	\begin{array}{ll}
		\Si{-k\rho-1} H\ul m^* & 4k+2=n+1 \\
		\Si{-k\rho-1} H\phi^*_{LDR} \ulF^* & 4k+2\in (n+1,2n]
	\end{array}\right.
\end{align*}
\end{prop}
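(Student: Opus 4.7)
The plan is to prove the proposition by induction on $n$, following the strategy of \cref{-4kslices:K:Z}.

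First, I would verify that the two candidate spectra are genuinely $(-4k-2)$-slices. By \cref{slices:K:m*:1-2}, $\Si{-1} H\ul m^*$ is a $(-2)$-slice, so \cref{rho:susp:commutes} gives that $\Si{-k\rho-1} H\ul m^*$ is a $(-4k-2)$-slice. For $\Si{-1} H\phi^*_{LDR}\ulF^*$, I would apply \cite[Theorem 6-4]{U}: the Mackey functor $\phi^*_{LDR}\ulF^*$ vanishes at $\K/e$ and its transfers are collectively surjective onto $\F^3$, so this spectrum is also a $(-2)$-slice, and \cref{rho:susp:commutes} yields $\Si{-k\rho-1} H\phi^*_{LDR}\ulF^*$ as a $(-4k-2)$-slice.

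For odd $n$, I would induct using \cref{slice:reduction:ZZ^*}, which in this range gives
\begin{align*}
P^{-4k-2}_{-4k-2}(\Si{-n} H\ulZ) \simeq \Si{-\rho} P^{-4(k-1)-2}_{-4(k-1)-2}(\Si{-n+4} H\ulZ).
\end{align*}
The base cases $n \in \{1, 3, 5\}$ are handled by the explicit cotowers in \cref{tower:-1:K:Z}, \cref{tower:-3:K:Z}, and \cref{tower:-5:K:Z}. Matching the cotower expressions to the formulas requires observing that, for example, $\Si{-2\rho+1} H\phi^*_{LDR}\ulF$ and $\Si{-\rho-1} H\phi^*_{LDR}\ulF^*$ are both $(-6)$-slices (via \cref{rho:susp:commutes} applied to known $(-2)$-slices), and hence must be equivalent by uniqueness of slices. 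The inductive step then propagates cleanly: the $\Si{-\rho}$ suspension on the right sends $\Si{-(k-1)\rho - 1} H\ul m^*$ to $\Si{-k\rho - 1} H\ul m^*$, and similarly for $\phi^*_{LDR}\ulF^*$, matching the formulas.

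For even $n$, the base cases $n \in \{2, 4\}$ are handled by \cref{tower:-2:K:Z} and \cref{tower:-4:K:Z}, which show no $(-4k-2)$-slices below the top. The same reduction via \cref{slice:reduction:ZZ^*} extends this to all even $n$; the only potential $(-4k-2)$-slice at the top appears when $n \equiv 2 \pmod 4$, and in that case the top slice is already identified by \cref{-nslice:K:Z}. The main obstacle will be the reconciliation step in the odd base cases, where the cotower outputs (in terms of $\phi^*_{LDR}\ulF$) do not syntactically match the proposition's formulas (in terms of $\phi^*_{LDR}\ulF^*$); this is resolved by noting that the position in the filtration pins both down as the same $(-4k-2)$-slice, and the uniqueness of slices forces the two descriptions to be equivalent.
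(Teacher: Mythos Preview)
Your induction via \cref{slice:reduction:ZZ^*} has a genuine gap at the boundary case $4k+2=2n$ for odd $n$. That proposition only compares $P^{-j}_{-j}(\Si{-n}H\ulZ)$ with $\Si{-\rho}P^{-j+4}_{-j+4}(\Si{-n+4}H\ulZ)$ for $j\in[n,2n-1]$; when $4k+2=2n$ you would need $j=2n$, which lies outside that range. So while your argument handles $4k+2\in[n+1,2n-1]$ exactly as the paper does (base cases from the cotowers plus the reduction), it says nothing about the $(-2n)$-slice for $n\geq 7$. The paper treats this case separately and with substantially more work: it uses \cref{rho:susp:commutes} and the 2-slice characterization (\cref{2slice:characterization}) to show the slice has the form $\Si{-(k+1)\rho+1}H\ul A$ with $\ul A$ supported on the proper subgroups, then invokes invariance under $\mathrm{Aut}(\K)$ to cut the candidates down to four Mackey functors, and finally runs a spectral-sequence argument (\cref{phiF:SSS}) to single out $\phi^*_{LDR}\ulF$. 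None of this is visible from the reduction alone.

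There is also a logical slip in your reconciliation step. You argue that $\Si{-2\rho+1}H\phi^*_{LDR}\ulF$ and $\Si{-\rho-1}H\phi^*_{LDR}\ulF^*$ must agree because both are $(-6)$-slices and ``slices are unique''. But uniqueness says $P^{-6}_{-6}(Y)$ is well-defined for a fixed $Y$; it does not say that any two spectra which happen to be $(-6)$-slices are equivalent. To match the cotower output to the stated formula you need an actual equivalence of spectra, not an appeal to uniqueness. Finally, for even $n$ your inductive scheme again only reaches levels in $[n,2n-1]$, whereas the paper's argument---comparing with the $C_2$-slices from \cref{-4kslices:K:Z} and observing that any leftover $(-4k-2)$-slice would be a $\K$-pullback, hence actually a $(-4k)$-slice---covers all levels at once.
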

\begin{proof}
When \(n\) is even, the slices in \cref{-4kslices:K:Z} restrict to the all of the appropriate slices for \(\Si{-n} H_{C_2}\ulZ\). If, then, \(\Si{-n} H_\K\ulZ\) has a \((-4k-2)\)-slice, it must be a pullback over \(\K\). But this is a contradiction as such slices are \((-4k)\)-slices. Thus, for \(n\) even, \(\Si{-n} H_\K\ulZ\) has no \((-4k-2)\)-slices.

Now let \(n\) be odd. We first handle the case \(4k+2=2n\). By \cref{rho:susp:commutes},
\begin{align*}
    P^{-4k-2}_{-4k-2}(\Si{-n} H_\K\ulZ) \simeq \Si{-(k+1)\rho} P^2_2( \Si{-n+(k+1)\rho} H_\K\ulZ).
\end{align*}
For clarity, let 
\begin{align*}
    X := P^2_2( \Si{-n+(k+1)\rho} H_\K\ulZ).
\end{align*}
Now from \cref{-2kslice:C2:HZ},
\begin{align*}
    i_{H}^* X \simeq \Si{1} H_{C_2}\ulg.
\end{align*}
where \(H\) is \(L\), \(D\), or \(R\). Thus, by \cref{2slice:characterization},
\begin{align*}
    \ul\pi_2( X) = \phi^*_{\K} B \qquad \text{ and } \qquad \ul\pi_1( X ) = \ul A
\end{align*}
where \(B\) is some group and 
\begin{equation*}
    \ul A = \begin{tikzcd}[ampersand replacement=\&, column sep=scriptsize, bend right=15]
	\& A_\K \ar[dl] \ar[d] \ar[dr] \& \\
	\F \ar[ur] \& \F \ar[u] \& \F \ar[ul] \\
	\& 0 \&
    \end{tikzcd}		
\end{equation*}
and \(A_\K \rightarrow \F\oplus \F\oplus \F\) is injective. That is, \(A_\K=\F^n\) with \(0\leq n\leq 3\).

If \(B\neq 0\), then \(X\) cannot be a 2-slice. Consequently, \(X\simeq \Si{1} H\ul A\).

Because \(\ulZ\) is invariant under the automorphisms of \(\K\), the spectrum \(\Si{-n} H\ulZ\) is as well. Therefore, the slices of \(\Si{-n} H\ulZ\) are also invariant under the automorphisms of \(\K\).

Thus, \(\ul A\) must be one of the following:
\begin{align*}
    \phi^*_{LDR} \ul f, \quad \ul m, \quad \ul{mg}, \quad \text{ or } \quad \phi^*_{LDR} \ulF.
\end{align*}

Except for degree \(-k-1\), \(\Si{-(k+1)\rho +1} H\ul A\) has the same homotopy for each choice of \(A\). 

For degree \(-k-1\), \(\ul\pi_{-k-1}(\Si{-(k+1)\rho +1} H\ul A)\) is a pullback over \(\K\) of dimension 3, 2, 1, or 0.

From \cref{phiF:SSS} we find that \(\pi_{-k-1}^\K(\Si{-(k+1)\rho +1} H\ul A)=0\). The only choice of \(\ul A\) that meets this requirement is \(\phi^*_{LDR}\ulF\).

Now let \(4k+2\in[n+1,2n-1]\). The base cases are established in \cref{tower:-1:K:Z}, \cref{tower:-3:K:Z}, and \cref{tower:-5:K:Z}. The result then follows from \cref{slice:reduction:ZZ^*}.
\end{proof}

%%%%%%%%%%%%%%%%%%%%%%%%%%%%%%%%%%%%%
\section{Slices of \texorpdfstring{\(\Si{n} H\ul \Z\)}{HZ}} \label{sec:slices:n:K:Z}
%%%%%%%%%%%%%%%%%%%%%%%%%%%%%%%%%%%%%

Recall from \cref{slice:K:Z1-5:Z*1-4} that \(\Si{n} H\ulZ\) is a slice for \(1\leq n\leq 5\).

\begin{prop} \label{slice:reduction:K:n:Z}
Let \(n\geq 6\). Then
\begin{align*}
	P^k_k(\Si{n} H\ulZ) &\simeq \Si{\rho} P^{k-4}_{k-4} (\Si{n-4} H\ulZ)
\end{align*}
for \(k\in[n,2n-7]\).
\end{prop}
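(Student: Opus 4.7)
The plan is to reduce to a statement about integer suspensions, where the argument parallels that of \cref{slice:reduction:ZZ^*} but becomes cleaner because the required connectivity bound can be established directly, without appealing to Brown--Comenetz duality. First, \cref{equivalence:ZZ*} gives the equivalence $\Si{n} H\ulZ \simeq \Si{\rho + n - 4} H\ulZ^*$, and then \cref{rho:susp:commutes} yields
\[
P^k_k(\Si{n} H\ulZ) \simeq \Si{\rho} P^{k-4}_{k-4}(\Si{n-4} H\ulZ^*).
\]
Setting $m := n - 4$ and $j := k - 4$, the desired equivalence follows once I show that the natural map $\Si{m} H\ulZ^* \to \Si{m} H\ulZ$ induces an equivalence on $P^j_j$ for $j$ corresponding to $k \in [n, 2n-7]$, that is, for $j \in [m, 2m-3]$.

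Next, I would use the short exact sequence $\ulZ^* \hookrightarrow \ulZ \twoheadrightarrow \ulM$ from \cref{sec:K:Mackey} to produce the cofiber sequence
\[
\Si{m} H\ulZ^* \to \Si{m} H\ulZ \to \Si{m} H\ulM.
\]
Applying the exact functor $P^j_j$ and invoking \cite[Lemma~4.28]{HHR}, it suffices to establish the single estimate $\Si{m} H\ulM \geq 2m$, which will ensure $P^j_j(\Si{m} H\ulM) = 0$ for all $j < 2m$ and comfortably cover the stated range.

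The main step is verifying $\Si{m} H\ulM \geq 2m$ via \cref{thm:atleastn}. The $e$-level vanishes because $M_e = 0$. For each order-two subgroup $H \leq \K$, the restriction $\downarrow^\K_H \ulM$ is the $C_2$-Mackey functor $\ulg$, so $i_H^*(\Si{m} H\ulM) \simeq \Si{m} H\ulg$, which by \cref{C2:g:2nslice} is a $2m$-slice over $C_2$; this yields $\pi_k^H = 0$ for $k < m$ as required. For the $\K$-level I would run the long exact sequence in $\K$-homotopy of the cofiber sequence, using that $\Si{m} H\ulZ$ and $\Si{m} H\ulZ^*$ each have $\K$-homotopy $\Z$ concentrated in degree $m$, with the induced map between these copies of $\Z$ being multiplication by $4$ (reading off the top-level component of the injection $\ulZ^* \hookrightarrow \ulZ$, whose cokernel is $(\ulM)_\K = \Z/4$). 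This shows $\pi_m^\K(\Si{m} H\ulM) = \Z/4$ with vanishing in all other degrees, which in particular gives $\pi_k^\K = 0$ for all $k < m/2$. Hence $\Si{m} H\ulM \geq 2m$, and the proposition follows.
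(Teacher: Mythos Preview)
Your overall strategy matches the paper's: reduce via \cref{equivalence:ZZ*} and \cref{rho:susp:commutes} to comparing slices of $\Si{m} H\ulZ^*$ and $\Si{m} H\ulZ$, then use the short exact sequence $\ulZ^* \hookrightarrow \ulZ \twoheadrightarrow \ulM$. Your verification that $\Si{m} H\ulM \geq 2m$ via \cref{thm:atleastn} is also correct. However, the step where you ``apply the exact functor $P^j_j$'' is a genuine error: the slice functors $P^j_j$ are \emph{not} exact, so knowing that $P^j_j$ of the cofiber vanishes does not by itself force $P^j_j(\Si{m} H\ulZ^*) \simeq P^j_j(\Si{m} H\ulZ)$. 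In fact your claimed range $j < 2m$ is false. For $m = 3$ one has $P^4_4(\Si{3} H\ulZ) = 0$ (since $\Si{3} H\ulZ$ is a $3$-slice by \cref{slice:K:Z1-5:Z*1-4}), whereas $P^4_4(\Si{3} H\ulZ^*) \simeq \Si{-\rho} P^8_8(\Si{7} H\ulZ) \simeq \Si{2} H\ul m \neq 0$ by \cref{tower:7:K:Z}, so the slices disagree at $j = 4 < 2m = 6$.

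The fix is exactly what the paper does: rotate the cofiber sequence to the fiber sequence
\[
\Si{m-1} H\ulM \longrightarrow \Si{m} H\ulZ^* \xrightarrow{\ \iota\ } \Si{m} H\ulZ,
\]
and apply \cite[Lemma~4.28]{HHR} to the \emph{fiber}. Your same connectivity check via \cref{thm:atleastn} (just shifted by one) gives $\Si{m-1} H\ulM \geq 2m-2$, so $\iota$ induces an equivalence on slices strictly below $2m-2$, i.e.\ for $j \leq 2m-3$. This is precisely (not ``comfortably'') the range $[m,2m-3]$ needed in the proposition. Note that one cannot deduce $\Si{m-1} H\ulM \geq 2m-1$ from $\Si{m} H\ulM \geq 2m$: desuspension does not interact with the slice filtration that way, and indeed $\pi_{m-1}^H(\Si{m-1} H\ulM) \neq 0$ for $|H| = 2$ obstructs any bound better than $2m-2$.
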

\begin{proof}
We employ a similar argument as in \cref{slice:reduction:ZZ^*}. Note that
\begin{align*}
	P^k_k(\Si{n} H\ulZ) &\simeq \Si{\rho} P^{k-4}_{k-4}(\Si{n-\rho} H\ulZ) \simeq \Si{\rho} P^{k-4}_{k-4}(\Si{n-4} H\ulZ^*).
\end{align*}

Consequently, it is sufficient to compare the \((k-4)\)-slices of \(\Si{n-4} H\ulZ\) and \(\Si{n-4} H\ulZ^*\).

The exact sequence \(\ulZ^* \rightarrow \ulZ \rightarrow \ulM\)  provides the fiber sequence 
\begin{align*}
    \Si{j-1} H\ulM \rightarrow \Si{j} H\ulZ^* \xrightarrow{\iota}{} \Si{j} H\ulZ.
\end{align*}
Then, because \(\Si{j-1} H\ulM\geq 2j-2\), by \cite[Lemma 4.28]{HHR}, \(\iota\) induces an equivalence on slices strictly below level \(2j-2\). Taking \(j=n-4\) gives the result.
\end{proof}

\begin{exmp} \label{tower:6:K:Z}
The tower for \(\Si{6} H\ulZ\) is
\begin{equation*}
	\begin{tikzcd}
		P^8_8 = \Si{2} H\ulg \arrow[r] & \Si{6} H\ulZ \arrow[d] \\
		& P^6_6 = \Si{\rho+2} H\ulZ(2,1)^*
	\end{tikzcd}
\end{equation*}
\end{exmp}
\begin{proof}
We have the short exact sequence \(\ulZ^* \rightarrow \ulZ(2,1)^* \rightarrow \ulg\). This leads to the cofiber sequence
\begin{equation*}
\begin{tikzcd}
	P^8_8 = \Si{2} H\ulg \arrow[r] & \Si{6} H\ulZ \simeq \Si{\rho+2} H\ulZ^* \arrow[r] & \Si{\rho+2} H\ulZ(2,1)^* = P^6_6.
\end{tikzcd}
\end{equation*}
\end{proof}

\begin{exmp} \label{tower:7:K:Z}
The tower for \(\Si{7} H\ulZ\) is
\begin{equation*}
	\begin{tikzcd}
		P^{12}_{12} = \Si{3} H\ulg \arrow[r] & \Si{7} H\ulZ \arrow[d] \\
		P^{8}_8 = \Si{\rho+2} H\ul m \arrow[r] & \Si{\rho+3} H\ulZ(2,1)^* \arrow[d] \\
		& P^7_7 = \Si{\rho+3} H\ulZ
	\end{tikzcd}
\end{equation*}
\end{exmp}
\begin{proof}
We suspend the tower in \cref{tower:6:K:Z} by 1 and augment with the cofiber sequence \(\Si{\rho+2} H\ul m \rightarrow \Si{\rho+3} H\ulZ(2,1)^* \rightarrow \Si{\rho+3} H\ulZ\) which arises from the short exact sequence \(\ulZ(2,1)^* \rightarrow \ulZ \rightarrow \ul m\).
\end{proof}

We now determine the slices of \(\Si{n} H\ulZ\).

\begin{thm} \label{duality:slices:K:Z}
Let \(n\geq 6\). For \(k\geq n+2\), 
\begin{align*}
	P^k_k (\Si{n} H\ulZ) &\simeq \Si{\rho} I_{\mathbb{Q}/\Z} \, P^{-k+4}_{-k+4} \, \Si{-n+5} H\ulZ.
\end{align*}
\end{thm}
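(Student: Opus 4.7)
The strategy is a reduction via standard properties of the slice filtration, followed by an Anderson duality comparison. First, combining \cref{rho:susp:commutes} with the equivalence $\Si{n}H\ulZ\simeq \Si{\rho+n-4}H\ulZ^*$ from \cref{equivalence:ZZ*} gives
\begin{align*}
P^k_k(\Si{n}H\ulZ)\simeq \Si{\rho}P^{k-4}_{k-4}(\Si{n-4}H\ulZ^*),
\end{align*}
while \cref{duality:prop} together with \cref{rho:susp:commutes} rewrites the right-hand side of the theorem as
\begin{align*}
\Si{\rho}I_{\Q/\Z}P^{-k+4}_{-k+4}(\Si{-n+5}H\ulZ)\simeq \Si{\rho}P^{k-4}_{k-4}(I_{\Q/\Z}\Si{-n+5}H\ulZ).
\end{align*}
Setting $m=k-4$, the theorem reduces to showing that the $m$-slices of $\Si{n-4}H\ulZ^*$ and of $I_{\Q/\Z}\Si{-n+5}H\ulZ$ coincide for all $m\geq n-2$.

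To produce a natural comparison map, I would apply the Anderson duality fiber sequence $I_\Z\to I_\Q\to I_{\Q/\Z}$ to $\Si{-n+5}H\ulZ$, invoking the self-duality $I_\Z H\ulZ\simeq H\ulZ^*$. This identification comes from the universal coefficient theorem: since $\ul\pi_0(H\ulZ)=\ulZ$ is torsion-free, the $\text{Ext}^1$ term vanishes and the resulting Mackey functor in degree zero is the linear dual of $\ulZ$, which is exactly $\ulZ^*$ as defined in \cref{sec:K:Mackey}. Rotating gives the cofiber sequence
\begin{align*}
I_\Q\Si{-n+5}H\ulZ\to I_{\Q/\Z}\Si{-n+5}H\ulZ\to \Si{n-4}H\ulZ^*,
\end{align*}
supplying the required comparison map whose fiber is $\Si{n-5}I_\Q H\ulZ$.

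The final step is to verify $\Si{n-5}I_\Q H\ulZ\leq n-3$, after which \cite[Lemma~4.28]{HHR} gives the claimed equivalence on $m$-slices for $m\geq n-2$. By the rational universal coefficient formula $\pi_V^H(I_\Q H\ulZ)=\text{Hom}(\pi_{-V}^H H\ulZ,\Q)$, this reduces to showing that for each $H\leq\K$ and each $V=k\rho_H+r$ with $r\geq 0$ and $k|H|\geq n-2$, the group $\pi_{n-5-V}^H H\ulZ$ is torsion. Extracted from \cref{krho:fixedpts:K:Z} at $H=\K$ and from \cref{ksigma+r:homotopy:C2:Z} after restriction to the proper $C_2$-subgroups, the non-torsion $\Z$-summands of the $RO(H)$-graded homotopy of $H\ulZ$ occur only at virtual representations with $\rho_H$-coefficient $-k'$ and integer shift $k'|H|$ for some $k'\geq 0$. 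Matching with $n-5-V$ forces $k=k'$ and $r=n-5-k|H|$, contradicting $r\geq 0$ and $k|H|\geq n-2$. I expect the main obstacle to be the verification of the Anderson self-duality $I_\Z H\ulZ\simeq H\ulZ^*$ over $\K$ and the precise identification of non-torsion $RO(H)$-graded classes across subgroups; once these are in place, HHR Lemma 4.28 completes the proof.
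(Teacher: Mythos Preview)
Your overall strategy is sound and gives a genuinely different argument from the paper's. The paper first splits off the explicitly computed top slice $P^{-n+5}_{-n+5}(\Si{-n+5}H\ulZ)=\Si{-\frac{n-5-r}{4}\rho-r}H\ulZ^*$, observes that the remaining piece $P^{-n+4}\Si{-n+5}H\ulZ$ is torsion (so that $I_{\Q/\Z}$ and $\Si{1}I_\Z$ agree on it), applies $I_\Z$ to the resulting cofiber sequence, and suspends by $\rho$ to obtain a cofiber sequence whose cofiber is an explicit $n$-slice. Your route via the Anderson fiber sequence $I_\Z\to I_\Q\to I_{\Q/\Z}$ and the identification $I_\Z H\ulZ\simeq H\ulZ^*$ is more direct and avoids invoking \cref{-nslice:K:Z}; the paper's approach, on the other hand, simultaneously produces the explicit $n$-slice needed for \cref{slices:K:n:Z}.

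There is, however, a genuine error in your final step. \cite[Lemma~4.28]{HHR} says that if the \emph{fiber} $F$ of a map satisfies $F\geq c$, then the map is an equivalence on slices $<c$; this uses that $\tau_{\geq c}$ is closed under suspension. The statement you need is the dual: if the \emph{cofiber} $C$ satisfies $C\leq c$, then the map is an equivalence on slices $>c$. The naive dual ``fiber $\leq c$ implies equivalence on slices $>c$'' is false, since $\{\leq c\}$ is not closed under suspension; for instance $H\Z\to *\to \Si{}H\Z$ over the trivial group has fiber $H\Z\leq 0$, yet $P_1(*)\not\simeq P_1(\Si{}H\Z)$. In your situation the cofiber of $I_{\Q/\Z}\Si{-n+5}H\ulZ\to\Si{n-4}H\ulZ^*$ is $\Si{n-4}I_\Q H\ulZ$, not the fiber $\Si{n-5}I_\Q H\ulZ$, so you must verify $\Si{n-4}I_\Q H\ulZ\leq n-3$. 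Fortunately your torsion computation goes through verbatim with $n-5$ replaced by $n-4$: a non-torsion class in $\pi^H_{n-4-r-k\rho_H}(H\ulZ)$ would force $r=n-4-k|H|\leq -2<0$. With this correction and the cofiber form of the lemma (which follows immediately from $[W,C]=0=[\Si{}W,C]$ for $W\in\tau_{\geq c+1}$), your argument is complete.
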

\begin{proof}
Take \(r\equiv n-5\pmod 4\) with \(1\leq r\leq 4\). We may map the top slice of \(\Si{-n+5} H\ulZ\) into it to find the cofiber sequence
\begin{align}
	P^{-n+5}_{-n+5} = \Si{-\frac{n-5-r}{4}\rho -r} H\ulZ^* \rightarrow \Si{-n+5} H\ulZ \rightarrow P^{-n+5-1} \, \Si{-n+5} H\ulZ. \label{slices:cofiber}
\end{align}
Note that all slices of \(P^{-n+4} H\ulZ\) are torsion, so then
\begin{align*}
    I_{\mathbb{Q}/\Z} \, P^{-n+4} \, H\ulZ = \Si{1} I_{\Z} \, P^{-n+4} \, \Si{-n+5} H\ulZ.
\end{align*}
Apply \(I_\Z\) to \cref{slices:cofiber} and suspend by one to find
\begin{align*}
	\Si{1} I_{\Z} \, P^{-n+4} \, \Si{-n+5} H\ulZ \rightarrow \Si{n-4} H\ulZ^* \rightarrow \Si{\frac{n-5-r}{4}\rho +r+1} H\ulZ.
\end{align*}
We can rewrite this as
\begin{align*}
    I_{\mathbb{Q}/\Z} \, P^{-n+4} \, \Si{-n+5} H\ulZ \rightarrow \Si{n-\rho} H\ulZ \rightarrow \Si{\frac{n-5-r}{4}\rho +r+1} H\ulZ.
\end{align*}
Finally, suspend by \(\rho\) to obtain
\begin{align}
    \Si{\rho} I_{\mathbb{Q}/\Z} \, P^{-n+4}  \,\Si{-n+5} H\ulZ \rightarrow \Si{n} H\ulZ \rightarrow \Si{\frac{n-1-r}{4}\rho +r+1} H\ulZ. \label{slices:cofiber:1}
\end{align}
Note that \(\Si{\frac{n-1-r}{4}\rho +r+1} H\ulZ\) is an \(n\)-slice and
\begin{align*}
    \Si{\rho} I_{\mathbb{Q}/\Z} \, P^{-n+4} \, \Si{-n+5} H\ulZ\in [n,4(n-4)].
\end{align*}
Furthermore, if \(n\not\equiv 2\pmod 4\), 
\begin{align*}
    \Si{\rho} I_{\mathbb{Q}/\Z} \, P^{-n+4} \, \Si{-n+5} H\ulZ\in [n+1,4(n-4)].
\end{align*}
From \cref{duality:prop},
\begin{align*}
    I_{\mathbb{Q}/\Z} \, P^{-k}_{-k} \, \Si{-n+5} H\ulZ &\simeq P^{k}_{k} I_{\mathbb{Q}/\Z} \, \Si{-n+5} H\ulZ.
\end{align*}
Consequently, \(\Si{\rho} I_{\mathbb{Q}/\Z} \, P^{-n+4}  \,\Si{-n+5} H\ulZ\) provides all slices of \(\Si{n} H\ulZ\).

Now suppose \(n\equiv 2\pmod 4\) so that \(r=1\). Then from \cref{duality:prop},
\begin{align*}
    P^n_n ( \Si{\rho} I_{\mathbb{Q}/\Z} \, P^{-n+4} \, \Si{-n+5} H\ulZ ) &\simeq \Si{\rho} I_{\mathbb{Q}/\Z} \, P^{-n}_{-n} (  P^{-n+4} \, \Si{-n+5} H\ulZ ) \\
    &\simeq \Si{\rho} I_{\mathbb{Q}/\Z} \, \Si{-(\frac{n-6}{4}+1)\rho+1} H\ul {mg} \\
    &\simeq \Si{\frac{n-2}{4}\rho + 1} H\ul m
\end{align*}

Apply \(P^n_n(-)\) to \cref{slices:cofiber:1} to get the extension
\begin{align*}
    \Si{\frac{n-2}{4}\rho + 1} H\ul m \rightarrow P^n_n \Si{n} H\ulZ \rightarrow \Si{\frac{n-2}{4}\rho +2} H\ulZ
\end{align*}

and the fiber sequence
\begin{align*}
    \Si{\rho} I_{\mathbb{Q}/\Z} \, P^{-n+5} H\ulZ \rightarrow \Si{\rho} I_{\mathbb{Q}/\Z} \, P^{-n+4} H\ulZ \rightarrow P^n_n \Si{n} H\ulZ.
\end{align*}

Now \(\Si{\rho} I_{\mathbb{Q}/\Z} \, P^{-n+5} H\ulZ \in [n+1,4(n-4)]\) and thus supplies the remaining slices of \(\Si{n} H\ulZ\).
\end{proof}

\begin{prop} \label{slices:K:n:Z}
Let \(n\geq 6\) and set \(r\equiv n\pmod 4\) with \(2\leq r\leq 5\). The \(n\)-slice of \(\Si{n} H\ulZ\) is
\begin{align*}
	P^n_n \Si{n} H\ulZ &\simeq \left\lbrace \begin{array}{ll}
		\Si{\frac{n-2}{4}\rho + 2} H\ulZ(2,1)^* & n\equiv 2\pmod 4 \\
		\Si{\frac{n-r}{4}\rho + r} H\ulZ & \text{otherwise}
	\end{array}\right.
\end{align*}
\end{prop}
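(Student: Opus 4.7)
The plan is to reduce to small cases using Proposition \ref{slice:reduction:K:n:Z}, so that the result follows from the towers in Examples \ref{tower:6:K:Z} and \ref{tower:7:K:Z} together with the small-dimensional identifications in Proposition \ref{slice:K:Z1-5:Z*1-4}. Write $n = 4q + r$ with $r \in \{2,3,4,5\}$ and apply Proposition \ref{slice:reduction:K:n:Z} with $k = n$. The range hypothesis $k \in [n, 2n-7]$ collapses to $n \geq 7$, giving
$$P^n_n(\Si{n} H\ulZ) \simeq \Si{\rho}\, P^{n-4}_{n-4}(\Si{n-4} H\ulZ).$$
I would iterate this identity while the current index remains at least $7$, so that after $q'$ iterations one obtains $P^n_n(\Si{n} H\ulZ) \simeq \Si{q'\rho}\, P^{n_0}_{n_0}(\Si{n_0} H\ulZ)$, where $n_0 = n - 4q'$ is the stopping point.

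For $r \in \{3,4,5\}$ I would push the iteration down all the way to $n_0 = r$: the final reduction step is applied at $n_0 \in \{7,8,9\}$, all of which satisfy the range condition. By Proposition \ref{slice:K:Z1-5:Z*1-4}, $\Si{r} H\ulZ$ is already an $r$-slice, so $P^r_r(\Si{r} H\ulZ) = \Si{r} H\ulZ$, and reassembling the accumulated $\Si{\rho}$ factors yields
$$P^n_n(\Si{n} H\ulZ) \simeq \Si{\frac{n-r}{4}\rho + r} H\ulZ,$$
as claimed. (One could alternatively stop the $r=3$ case at $P^7_7 \Si{7} H\ulZ$ and invoke Example \ref{tower:7:K:Z} directly; the outcome is the same.)

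For $r = 2$, the range hypothesis $n \geq 7$ fails exactly at $n = 6$, so the iteration terminates at $P^6_6(\Si{6} H\ulZ)$. Here I would invoke Example \ref{tower:6:K:Z}, which identifies this slice as $\Si{\rho + 2} H\ulZ(2,1)^*$, and reassemble the $\Si{\rho}$ factors to conclude
$$P^n_n(\Si{n} H\ulZ) \simeq \Si{\frac{n-2}{4}\rho + 2} H\ulZ(2,1)^*.$$
There is no serious obstacle; the one place requiring care is confirming at each iteration step that $n - 4j \geq 7$ continues to hold and that the stopping index $n_0$ has the correct residue, which is simply a check on $n \bmod 4$. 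Once this bookkeeping is tracked the argument is a mechanical application of the already-proved reduction.
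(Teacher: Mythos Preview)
Your argument is correct. For the case $n\equiv 2\pmod 4$ it coincides exactly with the paper's proof: iterate \cref{slice:reduction:K:n:Z} down to $n_0=6$ and invoke \cref{tower:6:K:Z}.

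For $n\not\equiv 2\pmod 4$ the paper takes a slightly different route. Rather than iterating \cref{slice:reduction:K:n:Z} down to $n_0=r\in\{3,4,5\}$ and invoking \cref{slice:K:Z1-5:Z*1-4}, the paper appeals directly to the cofiber sequence \eqref{slices:cofiber:1} established inside the proof of \cref{duality:slices:K:Z}:
\[
\Si{\rho} I_{\Q/\Z}\, P^{-n+4}\,\Si{-n+5} H\ulZ \longrightarrow \Si{n} H\ulZ \longrightarrow \Si{\frac{n-r}{4}\rho+r} H\ulZ,
\]
where the right-hand term is already an $n$-slice and the fiber lies in $[n+1,4(n-4)]$ when $n\not\equiv 2\pmod 4$; this identifies $P^n_n$ in one stroke. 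Your approach has the advantage of being completely uniform across residues and of not depending on the duality machinery of \cref{duality:slices:K:Z}; the paper's approach has the advantage of exhibiting the $n$-slice as the base of an explicit fiber sequence that simultaneously packages the higher slices. Either is a legitimate proof.
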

\begin{proof}
For \(n\not\equiv 2\pmod 4\), this follows from \cref{slices:cofiber:1}. When \(n\equiv 2\pmod 4\) it follows from \cref{tower:6:K:Z} and repeated application of \cref{slice:reduction:K:n:Z}.
\end{proof}

\begin{prop} \label{slices:K:4k:Z}
Let \(n\geq 6\). The \(4k\)-slices of \(\Si{n} H\ulZ\) are
	\begin{align*}
	P^{4k}_{4k} ( \Si{n} H\ulZ ) \simeq \left\lbrace \begin{array}{ll}
		\Si{k\rho} H\ul{mg}^*  & 4k=n+1 \\
		\Si{k} H\ulg^{\frac{1}{2}(4k-n)}  & \begin{aligned}
		4k\in [n+2,2n-8], \\ n\equiv 0\pmod 2
		\end{aligned} \\
		\Si{k\rho} H(\ulg^{\frac{1}{2}(4k-n+3)-3} \oplus \phi^*_{LDR} \ulF^* ) & 
		\begin{aligned} 4k\in[n+2,2n-6], \\ n\equiv 1\pmod 2
		\end{aligned} \\
		\Si{k} H\ulg^{n-k-3} & 4k\in[2n-4,4(n-4)]
	\end{array}\right.
\end{align*}
\end{prop}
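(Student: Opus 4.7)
The plan is to apply \cref{duality:slices:K:Z} to the \((-4k')\)-slices of \(\Si{-n'} H\ulZ\) from \cref{-4kslices:K:Z}. Setting \(n' = n - 5\) and \(k' = k - 1\), \cref{duality:slices:K:Z} gives
\[
P^{4k}_{4k}(\Si{n} H\ulZ) \simeq \Si{\rho} I_{\Q/\Z}\, P^{-4k'}_{-4k'}\, \Si{-n'} H\ulZ
\]
for \(4k \geq n+2\), and each of the four cases in the statement will be obtained by dualizing one of the four cases in \cref{-4kslices:K:Z}.

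Since \(\ul{mg}\), \(\ulg^m\), and \(\phi^*_{LDR}\ulF\) are all torsion, \(I_{\Q/\Z} H\ul M \simeq H\ul M^*\). This immediately yields \(\Si{\rho} I_{\Q/\Z}\,\Si{-k'\rho} H\ul{mg} \simeq \Si{k\rho} H\ul{mg}^*\) and \(\Si{\rho} I_{\Q/\Z}\,\Si{-k'\rho} H\phi^*_{LDR}\ulF \simeq \Si{k\rho} H\phi^*_{LDR}\ulF^*\). For the \(\ulg^m\) summands I would additionally note that \((\ulg^m)^* \cong \ulg^m\) (since \(\ulg^m\) is concentrated at \(\K/\K\) with trivial structure maps) and that \(\Si{\sigma^H} H\ulg^m \simeq H\ulg^m\) for each index-two subgroup \(H \leq \K\). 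The latter follows by smashing the cofiber sequence \(\K/H_+ \to S^0 \to S^{\sigma^H}\) with \(H\ulg^m\), whose leftmost term is trivial because \(\ulg^m\) restricts to zero on proper subgroups. Consequently \(\Si{\rho} H\ulg^m \simeq \Si{1} H\ulg^m\), and so \(\Si{\rho} I_{\Q/\Z}\,\Si{-k'} H\ulg^m \simeq \Si{k} H\ulg^m\).

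The main obstacle will be tracking the index ranges and wedge multiplicities through this substitution. Under \(n' = n-5\), \(k' = k-1\), the four ranges in \cref{-4kslices:K:Z} become \(4k = n+1\) (only when \(n \equiv 3 \pmod 4\)), \(4k \in [n+2, 2n-8]\) with \(n\) even, \(4k \in [n+2, 2n-6]\) with \(n\) odd, and \(4k \in [2n-4, 4(n-4)]\); the multiplicities \((4k'-n'-1)/2\), \((4k'-n'+2)/2 - 3\), and \(n'-k'+1\) become \((4k-n)/2\), \((4k-n+3)/2 - 3\), and \(n-k-3\), respectively. One must also verify that these ranges do not overlap with the top slice already computed in \cref{slices:K:n:Z}.

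Finally, the case \(4k = n+1\) lies outside the \(4k \geq n+2\) hypothesis of \cref{duality:slices:K:Z}, but it can be handled directly from the fiber sequence appearing in the proof of that theorem: when \(r = 2\) (i.e., \(n \equiv 3 \pmod 4\)), the cofiber term is precisely \(P^n_n\), so the duality identification extends to cover \(4k = n+1\) as well.
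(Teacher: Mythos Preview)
Your approach is the same as the paper's: the paper's proof is the single sentence ``This follows from \cref{-4kslices:K:Z} and \cref{duality:slices:K:Z},'' and you have correctly unpacked what that entails --- the substitution $n'=n-5$, $k'=k-1$, the Brown--Comenetz dualization of the torsion Mackey functors, the identification $\Si{\rho}H\ulg^m\simeq\Si{1}H\ulg^m$, and the index/multiplicity bookkeeping. Your treatment of the boundary case $4k=n+1$ (noting that for $n\equiv 3\pmod 4$ the cofiber in \eqref{slices:cofiber:1} is already the $n$-slice, so the duality identification extends one step further than the stated hypothesis $k\geq n+2$) is a detail the paper leaves implicit; your handling of it is correct.
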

\begin{proof}
This follows from \cref{-4kslices:K:Z} and \cref{duality:slices:K:Z}.
\end{proof}

\begin{prop} \label{slices:4k+2:K:Z}
For \(n\geq 6\), except for possibly the \(n\) slice,
the nontrivial \((4k+2)\)-slices of \(\Si{n} H\ulZ\) are
\begin{align*}
	P^{4k+2}_{4k+2} (\Si{n} H\ulZ) &\simeq \left\lbrace \begin{array}{ll}
		\Si{k\rho+1} H\phi^*_{LDR} \ulF & 
		\begin{aligned} 4k+2\in[n+2,2n-6], \\ n\equiv 0\pmod 2
		\end{aligned}
	\end{array}\right.
\end{align*}
\end{prop}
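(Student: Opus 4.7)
The plan is to derive this by applying the Brown--Comenetz duality equivalence of \cref{duality:slices:K:Z} to the \((-4k-2)\)-slice computation of \cref{-4k-2slices:K:Z}, exactly as \cref{slices:K:4k:Z} does for \((4k)\)-slices. Since each index \(4k+2\) in the claimed range \([n+2, 2n-6]\) satisfies \(4k+2 \geq n+2\), \cref{duality:slices:K:Z} gives
\[
P^{4k+2}_{4k+2}(\Si{n} H\ulZ) \;\simeq\; \Si{\rho}\, I_{\mathbb{Q}/\Z}\, P^{-(4k-2)}_{-(4k-2)}(\Si{-(n-5)} H\ulZ).
\]
Setting \(k' := k-1\) and \(n' := n-5\), the right-hand slice is exactly \(P^{-4k'-2}_{-4k'-2}(\Si{-n'} H\ulZ)\), to which \cref{-4k-2slices:K:Z} applies.

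Next I would case-split on the parity of \(n\). If \(n\) is odd then \(n'\) is even, and \cref{-4k-2slices:K:Z} tells us that \(\Si{-n'} H\ulZ\) has no \((-4k'-2)\)-slices aside from possibly the top slice. That exception sits at \(4k'+2 = n'\), equivalently \(4k+2 = n-3\), which lies below \(n+2\) and is thus outside the range in which duality applies; consequently all \((4k+2)\)-slices of \(\Si{n} H\ulZ\) in \([n+2, 2n-6]\) vanish for odd \(n\), matching the statement. If \(n\) is even then \(n'\) is odd, and \cref{-4k-2slices:K:Z} provides
\[
P^{-4k'-2}_{-4k'-2}(\Si{-n'} H\ulZ) \;\simeq\; \Si{-k'\rho - 1} H\phi^*_{LDR}\ulF^*
\]
precisely when \(4k'+2 \in (n'+1, 2n']\). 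Translating back, this condition becomes \(4k-2 \in (n-4, 2n-10]\), which, using that both \(4k+2\) and \(n\) are even, is equivalent to \(4k+2 \in [n+2, 2n-6]\). The boundary \(4k'+2 = n'+1\) produces an \(H\ul m^*\) factor instead and corresponds to \(4k+2 = n\), precisely the \(n\)-slice already excepted in the statement and identified in \cref{slices:K:n:Z}.

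Finally I would unwind the duality. Since \(\phi^*_{LDR}\ulF^*\) is torsion, the example in \cref{BCandA:duality} yields \(I_{\mathbb{Q}/\Z} H\phi^*_{LDR}\ulF^* \simeq H(\phi^*_{LDR}\ulF^*)^* \simeq H\phi^*_{LDR}\ulF\); combined with the fact that \(I_{\mathbb{Q}/\Z}\) interchanges suspensions and desuspensions, we compute
\[
\Si{\rho}\, I_{\mathbb{Q}/\Z}\, \Si{-(k-1)\rho - 1} H\phi^*_{LDR}\ulF^* \;\simeq\; \Si{\rho + (k-1)\rho + 1} H\phi^*_{LDR}\ulF \;\simeq\; \Si{k\rho + 1} H\phi^*_{LDR}\ulF,
\]
which is the claimed formula. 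The proof is essentially a bookkeeping translation through duality; the only genuine care required is in matching the two parity-dependent index ranges and in verifying the identification \((\phi^*_{LDR}\ulF^*)^* \simeq \phi^*_{LDR}\ulF\) of \(\K\)-Mackey functors, which is routine from the Lewis diagrams.
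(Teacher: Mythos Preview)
Your duality computation for the formula in the range $[n+2,2n-6]$ with $n$ even is correct and is exactly what the paper does for that part. The index bookkeeping and the Brown--Comenetz identification $(\phi^*_{LDR}\ulF^*)^* \cong \phi^*_{LDR}\ulF$ are fine.

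However, the proposition also asserts \emph{vanishing} of all $(4k+2)$-slices outside this range (aside from the $n$-slice), and here your argument is incomplete. You only treat indices $4k+2\geq n+2$, since that is the hypothesis of \cref{duality:slices:K:Z} as stated. This leaves the level $4k+2=n+1$ unaddressed when $n\equiv 1\pmod 4$; duality does not directly reach it, and you give no separate argument. (Your phrasing also restricts attention to $[n+2,2n-6]$ even though the duality argument really controls all of $[n+2,\infty)$; the vanishing above $2n-6$ is implicit in \cref{-4k-2slices:K:Z} but you should say so.)

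The paper handles vanishing by a different and more uniform route: it restricts to the $C_2$ subgroups and invokes \cref{2kslice:C2:HZ}, which shows that $\Si{n}H_{C_2}\ulZ$ has no $(4k+2)$-slices outside $[n+2,2n-6]$ (and none at all when $n$ is odd). Any $(4k+2)$-slice of $\Si{n}H_\K\ulZ$ whose $C_2$-restrictions vanish must be a pullback over $\K$, hence a $4j$-slice by \cref{pullback:slice}, hence trivial. This restriction argument disposes of the $n+1$ level in one stroke and avoids leaning on the proof (rather than the statement) of \cref{duality:slices:K:Z}. Your approach can be repaired either by importing this restriction argument for the single missing level, or by observing that the proof of \cref{duality:slices:K:Z} in fact establishes the duality formula down to level $n+1$ whenever $n\not\equiv 2\pmod 4$.
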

\begin{proof}
By \cref{2kslice:C2:HZ}, \(\Si{n} H_{C_2}\ulZ\) has no \((4k+2)\)-slices except in the range \([n+2,2n-6]\) when \(n\) is even. So for \(4k+2\) not in this range, any such slice must be a pullback over \(\K\). But then it is a \(4k\)-slice. For \(4k+2\in[n+2,2n-6]\), the result follows from \cref{-4k-2slices:K:Z} and \cref{duality:slices:K:Z}.
\end{proof}

%%%%%%%%%%%%%%%%%%%%%%%%%%%%%%%%%%%%%
\subsection{Comparison with the Slices of \texorpdfstring{\(\Si{n} H\ulF\)}{HF}} \label{subsec:compGY}
%%%%%%%%%%%%%%%%%%%%%%%%%%%%%%%%%%%%%
\phantom{a}

This work is complementary to \cite{GY}, which calculates the slices of \(\Si{n} H\ulF\) for \(n\geq 1\). One would hope that the exact sequence \(\ulZ \xrightarrow{2}{} \ulZ \rightarrow \ulF\) could play a role in recovering the slices of \(\Si{n} H\ulZ\) from the slices of \(\Si{n} H\ulF\) or vice versa, but this is not always the case.

When \(G=C_2\), \cite[Theorem 3.18]{GY} shows that the slices of \(\Si{n} H_{C_2}\ulF\) contain both even and odd suspensions of \(H_{C_2}\ulg\), whereas \cref{tower:C2:Z:n} shows that \(\Si{n} H_{C_2}\ulZ\) has only even or odd suspensions of \(H_{C_2}\ulg\). This is illustrated in \cref{tab-C2slices}.

\begin{table}[h] 
\caption[Comparison of {$C_2$}-slices]{Comparison of $C_2$-slices}
\label{tab-C2slices}
\begin{center}
	{\renewcommand{\arraystretch}{1.5}\setlength{\tabcolsep}{15pt}
		\begin{tabular}{|c|c|c|}
			\hline 
			  Slices of \(\Si{9} H_{C_2}\ulZ\)	& Slices of \(\Si{9} H_{C_2}\ulF\) & Slices of \(\Si{10} H_{C_2}\ulZ\) \\  \hline
			   &\(P^{14}_{14} = \Si{7} H_{C_2}\ulg\) & \(P^{14}_{14} = \Si{7} H_{C_2}\ulg\) \\
			   \(P^{12}_{12} = \Si{6} H_{C_2}\ulg\) &\(P^{12}_{12} = \Si{6} H_{C_2}\ulg\) & \\
			   &\(P^{10}_{10} = \Si{5} H_{C_2}\ulg\) & \(P^{10}_{10} = \Si{2\rho+2} H_{C_2}\ulZ^*\) \\
			   \(P^{9}_{9} = \Si{2\rho+1} H_{C_2}\ulZ^*\) & \(P^{9}_{9} = \Si{2\rho+1} H_{C_2}\ulF^*\) & \\ \hline
	\end{tabular} }
\end{center}
\end{table}

The \(2k\)-slices of \(\Si{n} H_{C_2}\ulZ\) and \(\Si{n+1} H_{C_2}\ulZ\) only combine to give the slices of \(\Si{n} H_{C_2}\ulF\) when \(n\equiv 3,4\pmod 4\). When \(n\equiv 5,6\pmod 4\), the \(\Si{n} H_{C_2}\ulZ\) and \(\Si{n+1} H_{C_2}\ulZ\) slices miss the \((n+r)\)-slice of \(\Si{n} H_{C_2}\ulF\), where \(r=1,2\), respectively. For example, neither \(\Si{9} H_{C_2}\ulZ\) nor \(\Si{10} H_{C_2}\ulZ\) has a slice equivalent to \(\Si{5} H_{C_2}\ulg\), but \(\Si{9} H_{C_2}\ulF\) does.

We can recover the \((4k)\)-slices of \(\Si{n} H\ulF\) from the \((4k)\)-slices of \(\Si{n} H_\K\ulZ\) and \(\Si{n+1} H_\K\ulZ\). As in \cref{krho:fixedpts:K:Z}, we use the sequence \(\ulZ \xrightarrow{2}{} \ulZ \rightarrow \ulF\) to get the cofiber sequence \(\Si{-k\rho} H_\K\ulZ \rightarrow \Si{-k\rho} H_\K\ulF \rightarrow \Si{1-k\rho} H_\K\ulZ\). We then have the long exact sequence in homotopy
\begin{equation} \label{les:ZZF:4kslices}
\begin{tikzcd}[column sep=small, row sep=small]
    \ul\pi_{-n}( \Si{-k\rho} H_\K\ulZ) \arrow[r, "2"] & 
    \ul\pi_{-n}( \Si{-k\rho} H_\K\ulZ) \arrow[r] & \phantom{\ul\pi_{-n}( \Si{-k\rho} H_\K\ulZ)} \\ 
    \ul\pi_{-n}( \Si{-k\rho} H_\K\ulF) \arrow[r] &
    \ul\pi_{-n-1}( \Si{-k\rho} H_\K\ulZ) \arrow[r, "2"] &
    \ul\pi_{-n-1}( \Si{-k\rho} H_\K\ulZ).
\end{tikzcd}
\end{equation}
When \(n\leq 4k-1\), all groups in \cref{les:ZZF:4kslices} are 2-torsion and the middle three terms become the exact sequence
\begin{equation} \label{ses:ZZF:4kslices}
\begin{tikzcd}[row sep=small]
    \ul\pi_{-n}( \Si{-k\rho} H_\K\ulZ) \arrow[r, hook] &
    \ul\pi_{-n}( \Si{-k\rho} H_\K\ulF) \arrow[r, twoheadrightarrow] &
    \ul\pi_{-n-1}( \Si{-k\rho} H_\K\ulZ).
\end{tikzcd}
\end{equation}

When \(n=4k\), the left four terms of \cref{les:ZZF:4kslices} become the exact sequence
\begin{equation*} 
\begin{tikzcd}[row sep=small]
    \ulZ^* \arrow[r, "2"] &
    \ulZ^* \arrow[r, twoheadrightarrow, "2"] &
    \ul\pi_{-n-1}( \Si{-k\rho} H_\K\ulZ) \arrow[r] & \ul 0.
\end{tikzcd}
\end{equation*}

Consequently, the \((4k)\)-slices of \(\Si{n} H_\K\ulF\) are
\begin{align*}
P^{4k}_{4k}( \Si{n} H_\K\ulF) &\simeq \left\lbrace
\begin{array}{ll}
   \Si{k\rho} H_\K\ulF^*  & n=4k \\
   \Si{k\rho} \ul E_{-n} &  n\leq 4k-1 
\end{array}\right.
\end{align*}

where \(\ul E_{-n}\) is the middle Mackey functor in \cref{ses:ZZF:4kslices}. This recovery is illustrated in \cref{tab-Kslices}.

\begin{table}[h] 
\caption[Comparison of {$\K$}-slices]{Comparison of $\K$-slices}
\label{tab-Kslices}
\begin{center}
	{\renewcommand{\arraystretch}{1.5}\setlength{\tabcolsep}{15pt}
		\begin{tabular}{|c|c|c|}
			\hline 
			  Slices of \(\Si{7} H\ulZ\) & Slices of \(\Si{7} H\ulF\) &  Slices of \(\Si{8} H\ulZ\)	\\ \hline
			  & \(P^{16}_{16} = \Si{4} H\ulg\) & \(P^{16}_{16} = \Si{4} H\ulg\) \\
			  \(P^{12}_{12} = \Si{3} H\ulg\) & \(P^{12}_{12} = \Si{3} H\ulg^3\) & \(P^{12}_{12} = \Si{3} H\ulg^2\) \\
			   &\(P^{10}_{10} = \Si{\rho+3} H\phi^*_{LDR} \ulF\) & \(P^{10}_{10} = \Si{\rho+3} H\phi^*_{LDR} \ulF\) \\
			  \(P^{8}_{8} = \Si{\rho+2} H\ul m\) & \(P^{8}_{8} = \Si{\rho+2} H\ul m\) & \(P^8_8 = \Si{\rho+4} H\ulZ\) \\
			  \(P^7_7 = \Si{\rho+3} H\ulZ\) & \(P^7_7 = \Si{\rho+3} H\ulF\) & \\ \hline
	\end{tabular} }
\end{center}
\end{table}

Except for the \(n\)-slice, all slices of \(\Si{7} H_\K\ulF\) are recovered from the slices of \(\Si{7} H_\K\ulZ\) and \(\Si{8} H_\K\ulZ\). It is not always the case, however, that the \((4k+2)\)-slices are recovered. For example, \(\Si{10} H_\K\ulF\) has a 14-slice (\cite[Example 6.14]{GY}), but neither \(\Si{10} H_\K\ulZ\) nor \(\Si{11} H_\K\ulZ\) have 14-slices.

%%%%%%%%%%%%%%%%%%%%%%%%%%%%%%%%%%%%%
\section{Homotopy Mackey Functor Computations} \label{sec:htpycomp}
%%%%%%%%%%%%%%%%%%%%%%%%%%%%%%%%%%%%%

Here we compute the homotopy Mackey functors of the slices of \(\Si{\pm n}H\ulZ\).

\begin{prop} \label{htpy:krho:Z}
For \(k\geq 1\), the nontrivial homotopy Mackey functors of \(\Si{k\rho} H\ulZ\) are
	\begin{align*}
	\ul\pi_n ( \Si{k\rho} H\ulZ ) = \left\lbrace \begin{array}{ll}
	    \ulZ & n=4k \\
		\ul{mg}  & n=4k-2 \\
		\ulg^{\frac{1}{2}(4k-n-1)}  & \begin{aligned}
		     n\in [2k,4k-3], \\ \text{ } n\equiv 1\pmod 2
		\end{aligned}			 \\
		\ulg^{\frac{1}{2}(4k-n+2)-3} \oplus \phi^*_{LDR} \ulF  & \begin{aligned}
		     n\in[2k,4k-3], \\ \text{ } n\equiv 0\pmod 2
		\end{aligned} \\
		\ulg^{n-k+1} & n\in[k,2k-1]
	\end{array}\right.
\end{align*}
\end{prop}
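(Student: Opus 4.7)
The plan is to proceed by induction on $k \geq 1$, paralleling the $\K$-fixed-point computation of \cref{krho:fixedpts:K:Z} but now tracking the full Mackey-functor structure. For the base case $k=1$, the three cofiber sequences
\begin{align*}
    (\K/L_+ &\to S^0 \to S^\beta)\wedge \Si{1} H\ulZ, \\
    (\K/R_+ &\to S^0 \to S^\alpha)\wedge \Si{\beta+1} H\ulZ, \\
    (\K/D_+ &\to S^0 \to S^\gamma)\wedge \Si{\alpha+\beta+1} H\ulZ
\end{align*}
used in the proof of \cref{krho:fixedpts:K:Z} already assemble $\Si{\rho} H\ulZ$ from $\ul\pi_0(H\ulZ) = \ulZ$; reading off the resulting Mackey functors (rather than merely ranks) yields $\ul\pi_4 = \ulZ$, $\ul\pi_2 = \ul{mg}$, and $\ul\pi_1 = \ulg$, matching the formula when $k=1$.

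For the inductive step I would apply the analogous cofiber sequences to pass from $\Si{k\rho} H\ulZ$ to $\Si{(k+1)\rho} H\ulZ$. Two constraints pin down $\ul\pi_n(\Si{(k+1)\rho} H\ulZ)$ at each intermediate stage. First, the value at $\K/\K$ is given by \cref{krho:fixedpts:K:Z}. Second, because $\rho_\K|_H = 2\rho_{C_2}$ for any order-two subgroup $H \leq \K$, we have $i_H^*(\Si{(k+1)\rho} H\ulZ) \simeq \Si{2(k+1)\rho_{C_2}} H_{C_2}\ulZ$, whose homotopy is supplied by \cref{ksigma+r:homotopy:C2:Z}. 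Combined with the vanishing of underlying homotopy in the relevant range (the underlying spectrum being $\Si{4(k+1)} H\Z$), these data determine every restriction of $\ul\pi_n$.

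Combining the two constraints identifies each Mackey functor with the claimed form. For odd $n \in [2(k+1), 4(k+1)-3]$ the cyclic restrictions vanish while $\K/\K$ has rank $(4(k+1)-n-1)/2$, forcing the pullback $\ulg^{(4(k+1)-n-1)/2}$. For even $n$ in the same range each cyclic level carries a single $\F$ (from a $\ulg$ entry of the $C_2$-computation) and the $\K/\K$-rank exceeds the contribution of a single $\phi^*_{LDR}\ulF$ by $(4(k+1)-n-4)/2$; this excess is realized by an additional pullback $\ulg^{(4(k+1)-n-4)/2}$ summand, producing $\phi^*_{LDR}\ulF \oplus \ulg^{(4(k+1)-n-4)/2}$. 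The lower range $n \in [k+1,2k+1]$ has vanishing cyclic restrictions and becomes the pullback $\ulg^{n-k}$. Finally, the top degrees $n=4(k+1)$ and $n=4(k+1)-2$ are identified as $\ulZ$ and $\ul{mg}$ respectively, using that these are the only Mackey functors consistent with the restriction data and fixed-point ranks in those degrees.

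The main obstacle will be verifying that the connecting maps split, so that $\phi^*_{LDR}\ulF \oplus \ulg^{j}$ really is a direct sum and that the ambiguity between $\ul{mg}$ and $\ul m \oplus \ulg$ in the top even degree resolves in favor of $\ul{mg}$. I would handle the first by observing that $\ulg^j$ is supported only at $\K/\K$ while $\phi^*_{LDR}\ulF$ is faithfully detected by its cyclic restrictions, so any nontrivial extension is forced to split by the restriction maps, whose behavior is already pinned down by \cref{ksigma+r:homotopy:C2:Z}. The second ambiguity is resolved by showing that the three cyclic-level restrictions have pairwise distinct kernels, exactly as in the kernel analysis at the end of the proof of \cref{-4kslices:K:Z}; this distinguishes $\ul{mg}$ (distinct kernels) from $\ul m \oplus \ulg$ (common kernel). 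A parallel argument rules out nontrivial Weyl actions throughout.
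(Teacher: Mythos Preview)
The paper's proof is a one-liner: it simply observes that \cref{htpy:krho:Z} is a restatement of \cref{-4kslices:K:Z}, via the identification $P^{-4k}_{-4k}(\Si{-n}H\ulZ)\simeq\Si{-k\rho}H\ul\pi_n(\Si{k\rho}H\ulZ)$ (with the top degree $n=4k$ supplied by \cref{-nslice:K:Z} and \cref{slice:reduction:ZZ^*}). All of the hard work---the fixed-point ranks, the transfer/restriction analysis, the identification of the Mackey-functor structure---has already been carried out in \cref{-4kslices:K:Z}. Your proposal effectively reproves that proposition from scratch, reorganized as an induction on $k$ rather than the paper's induction on $n$ via \cref{slice:reduction:ZZ^*}; the ingredients you list (\cref{krho:fixedpts:K:Z}, \cref{ksigma+r:homotopy:C2:Z}, the kernel analysis) are exactly those used there.

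There is, however, a genuine gap in your sketch for the even degrees $n\in[2k,4k-4]$. Knowing the groups at each level does not by itself pin down the Mackey functor: $\phi^*_{LDR}\ulF\oplus\ulg^{j}$, $\ul{mg}\oplus\ulg^{j+1}$, and $\ul m\oplus\ulg^{j+2}$ all have $\F$ at each cyclic level, zero underlying, and the same top rank. Your ``pairwise distinct kernels'' criterion distinguishes $\ul{mg}$ from $\ul m\oplus\ulg$ at $n=4k-2$ (top rank $2$), but it does \emph{not} separate $\phi^*_{LDR}\ulF\oplus\ulg^{j}$ from $\ul{mg}\oplus\ulg^{j+1}$ at lower even degrees---both have pairwise distinct hyperplane kernels. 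What is actually needed is the rank of the combined restriction $(\res{\K}{L},\res{\K}{D},\res{\K}{R})$, equivalently the dimension of the triple intersection of kernels. The paper avoids this issue in \cref{-4kslices:K:Z} by using \cref{slice:reduction:ZZ^*} to reduce these middle cases to the explicit base towers of \cref{sec:cotowers:K:Z}, and reserves the direct kernel computation only for the boundary case $n=2k$. Your inductive scheme on $k$ does not have access to this reduction, so you would need to strengthen the kernel analysis accordingly.
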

\begin{proof}
For \(n\in[k,4k-2]\), this is a restatement of \cref{-4kslices:K:Z}. For \(n=4k\), the result follows from \cref{-nslice:K:Z} and repeated application of \cref{slice:reduction:ZZ^*}.
\end{proof}

\begin{prop} \label{htpy:-krho:Z}
For \(k\geq 1\), the nontrivial homotopy Mackey functors of \(\Si{-k\rho} H\ulZ\) are
	\begin{align*}
	\ul\pi_{-n} ( \Si{-k\rho} H\ulZ ) = \left\lbrace \begin{array}{ll}
	    \ulZ^* & n=4k \\
		\ul{mg}^*  & n=4k-1 \\
		\ulg^{\frac{1}{2}(4k-n)}  & \begin{aligned}
		     n\in [2k+4,4k-3], \\ \text{ } n\equiv 0\pmod 2
		\end{aligned} \\
		\ulg^{\frac{1}{2}(4k-n+3)-3} \oplus \phi^*_{LDR} \ulF^* & \begin{aligned}
		     n\in[2k+3,4k-2], \\ \text{ } n\equiv 1\pmod 2
		\end{aligned}	 \\
	    \ulg^{n-k-3} & n\in[k+4,2k+2]
	\end{array}\right.
\end{align*}
\end{prop}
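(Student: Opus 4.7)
The plan is to mirror the proof of \cref{htpy:krho:Z}, this time reading off Mackey functors from the slice computations in \cref{slices:K:4k:Z} and \cref{slices:K:n:Z} for positive suspensions of $H\ulZ$. The key identity is the $RO(G)$-graded shifting
\[
\ul\pi_{-n}(\Si{-k\rho}H\ulZ) \;\cong\; \ul\pi_0(\Si{n-k\rho}H\ulZ),
\]
which, combined with \cref{rho:susp:commutes}, identifies
\[
P^{4k}_{4k}(\Si{n}H\ulZ) \;\simeq\; \Si{k\rho}\,P^{0}_{0}(\Si{n-k\rho}H\ulZ) \;\simeq\; \Si{k\rho}\,H\ul\pi_{-n}(\Si{-k\rho}H\ulZ),
\]
using that every $0$-slice is Eilenberg--MacLane by \cref{charac:slice:01-1}(1). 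Thus the Mackey functor appearing in the $4k$-slice of $\Si{n}H\ulZ$ computed in \cref{slices:K:4k:Z} is precisely $\ul\pi_{-n}(\Si{-k\rho}H\ulZ)$, and the middle-range cases of the proposition follow by transcribing the formulas there.

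To reconcile the slice formulas written in the form $\Si{k}H\ulg^m$ with the desired presentation $\Si{k\rho}H\ulg^m$, I will invoke the equivalence $\Si{k\rho}H\ulg\simeq\Si{k}H\ulg$. This holds because $\ulg$ restricts to the zero Mackey functor on every proper subgroup of $\K$, so that smashing each cofiber sequence $S^{-\sigma^H}\to S^0 \to \K/H_+$ with $H\ulg$ yields $\K/H_+\wedge H\ulg\simeq 0$, and therefore $\Si{-\sigma^H}H\ulg\simeq H\ulg$ for each of the three sign representations $\alpha$, $\beta$, $\gamma$. Iterating gives $\Si{-\overline\rho}H\ulg\simeq H\ulg$, and hence $\Si{k\rho}H\ulg\simeq\Si{k}H\ulg$ for all $k$.

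For the top homotopy in degree $-4k$, \cref{slices:K:n:Z} (in the case $n\equiv 0\pmod{4}$) identifies $P^{4k}_{4k}(\Si{4k}H\ulZ)\simeq \Si{(k-1)\rho+4}H\ulZ$. Rearranging \cref{equivalence:ZZ*} into the form $\Si{4}H\ulZ\simeq \Si{\rho}H\ulZ^*$ rewrites this slice as $\Si{k\rho}H\ulZ^*$, so the identification above gives $\ul\pi_{-4k}(\Si{-k\rho}H\ulZ)\cong\ulZ^*$. Analogous readings recover $\ul{mg}^*$ at degree $-(4k-1)$ from the case $4k=n+1$ of \cref{slices:K:4k:Z}, and the $\phi^*_{LDR}\ulF^*$ summand at odd degrees from the $n$ odd case there.

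The main obstacle is bookkeeping: carefully aligning the indexing ranges of \cref{slices:K:4k:Z} (parameterized by the positive suspension $n$) with the ranges stated in the present proposition (parameterized by homotopy degree $-n$ of $\Si{-k\rho}H\ulZ$), and confirming that the boundary cases where the slice formulas of \cref{slices:K:4k:Z} degenerate to a trivial spectrum are correctly excluded from the listed nontrivial homotopy Mackey functors.
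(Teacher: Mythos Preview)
Your proposal is correct and follows essentially the same approach as the paper: both identify $\ul\pi_{-n}(\Si{-k\rho}H\ulZ)$ as the Mackey functor appearing in the $4k$-slice $P^{4k}_{4k}(\Si{n}H\ulZ)$ via \cref{rho:susp:commutes} and \cref{charac:slice:01-1}, and then read the answer off from \cref{slices:K:4k:Z} and \cref{slices:K:n:Z}. Your write-up is in fact more detailed than the paper's, which simply declares the middle range ``a restatement of \cref{slices:K:4k:Z}'' and handles $n=4k$ via \cref{slices:K:n:Z} together with repeated application of \cref{slice:reduction:K:n:Z}; your explicit justification of $\Si{k\rho}H\ulg\simeq\Si{k}H\ulg$ and your use of \cref{equivalence:ZZ*} to rewrite $\Si{(k-1)\rho+4}H\ulZ$ as $\Si{k\rho}H\ulZ^*$ fill in steps the paper leaves implicit.
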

\begin{proof}
For \(n\in[k+4,4k-1]\), this is a restatement of \cref{slices:K:4k:Z}. For \(n=4k\), the result follows from \cref{slices:K:n:Z} and repeated application of \cref{slice:reduction:K:n:Z}.
\end{proof}

\begin{prop} \label{htpy:krho:Z*}
For \(k\geq 1\), the nontrivial homotopy Mackey functors of \(\Si{k\rho} H\ulZ^*\) are
	\begin{align*}
	\ul\pi_n ( \Si{k\rho} H\ulZ^* ) = \left\lbrace \begin{array}{ll}
	    \ulZ & n=4k \\
		\ul{mg}  & n=4k-2 \\
		\ulg^{\frac{1}{2}(4k-n-1)}  & \begin{aligned}
		     n\in [2k+2,4k-3], \\ \text{ } n\equiv 1\pmod 2 
		\end{aligned}	\\
		\ulg^{\frac{1}{2}(4k-n-2)-3} \oplus \phi^*_{LDR} \ulF  & \begin{aligned}
		     n\in[2k+2,4k-3], \\ \text{ } n\equiv 0\pmod 2
		\end{aligned}	 \\
		\ulg^{n-k+2} & n\in[k+3,2k+1]
	\end{array}\right.
\end{align*}
\end{prop}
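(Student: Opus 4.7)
The plan is to reduce directly to Proposition~\ref{htpy:krho:Z} via the equivalence of Proposition~\ref{equivalence:ZZ*}. Suspending both sides of $\Si{-\rho} H\ulZ \simeq \Si{-4} H\ulZ^*$ by $(k+1)\rho - 4$ yields
\begin{align*}
    \Si{k\rho} H\ulZ^* \simeq \Si{(k-1)\rho + 4} H\ulZ,
\end{align*}
so that there is a natural isomorphism $\ul\pi_n(\Si{k\rho} H\ulZ^*) \cong \ul\pi_{n-4}(\Si{(k-1)\rho} H\ulZ)$ for every $n$.

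Next I would apply Proposition~\ref{htpy:krho:Z} with $k$ replaced by $k-1$ and $n$ replaced by $n-4$, and rewrite each of the five resulting cases in terms of the original $n$ and $k$. The top two cases ($n-4 = 4(k-1)$ and $n-4 = 4(k-1)-2$) produce $\ulZ$ at $n = 4k$ and $\ul{mg}$ at $n = 4k-2$. The parity-split middle cases with $n - 4 \in [2(k-1), 4(k-1) - 3]$ translate to $n \in [2k+2, 4k - 3]$, with the odd-$n$ exponent becoming $\tfrac{1}{2}(4k - n - 1)$ and the even-$n$ case picking up both the $\phi^*_{LDR}\ulF$ summand and a power of $\ulg$ of exponent $\tfrac{1}{2}(4k - n - 2) - 3$. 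Finally, the bottom range $n - 4 \in [k-1, 2(k-1) - 1]$ translates to $n \in [k+3, 2k+1]$, with the exponent on $\ulg$ read off from $(n - 4) - (k - 1) + 1$.

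There is no substantive obstacle: Proposition~\ref{equivalence:ZZ*} supplies the equivalence and Proposition~\ref{htpy:krho:Z} supplies the target homotopy, so only bookkeeping remains. The one delicate point is confirming that the ranges fit together without gap or unintended overlap; in particular one must verify that the ``missing'' odd degree $n = 4k - 1$ (falling between the isolated $4k-2$ case and the upper end $4k - 3$ of the middle range) corresponds to a genuine gap in Proposition~\ref{htpy:krho:Z} after the $(k,n) \mapsto (k-1, n-4)$ shift, and therefore really does carry no nontrivial homotopy for $\Si{k\rho} H\ulZ^*$.
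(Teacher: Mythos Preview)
Your approach is exactly the paper's: the proof there reads in its entirety ``This follows from the equivalence \(\Si{k\rho} H\ulZ^* \simeq \Si{(k-1)\rho+4} H\ulZ\) and \cref{htpy:krho:Z},'' and you have simply spelled out the substitution \((k,n)\mapsto(k-1,n-4)\) and the resulting bookkeeping. One small caution on that bookkeeping: carrying the even-\(n\) exponent through the shift literally gives \(\tfrac{1}{2}(4(k-1)-(n-4)+2)-3 = \tfrac{1}{2}(4k-n+2)-3\), so double-check that line against the stated exponent before finalizing.
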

\begin{proof}
This follows from the equivalence \(\Si{k\rho} H\ulZ^* \simeq \Si{(k-1)\rho+4} H\ulZ\) and \cref{htpy:krho:Z}.
\end{proof}

\begin{prop} \label{htpy:krho:Z(2,1)*}
For \(k\geq 1\), the nontrivial homotopy Mackey functors of \(\Si{k\rho} H\ulZ(2,1)^*\) are
\begin{align*}
	\ul\pi_n ( \Si{k\rho} H\ulZ(2,1)^* ) = \left\lbrace \begin{array}{ll}
	\ul\pi_n(\Si{k\rho} H\ulZ^*) & n\in[k+3,4k] \\
	\ulg & n=k
	\end{array}\right.
\end{align*}
\end{prop}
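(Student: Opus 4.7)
The plan is to use the short exact sequence of Mackey functors \(\underline{\Z}^* \hookrightarrow \underline{\Z}(2,1)^* \twoheadrightarrow \underline{g}\) (the one already invoked in \cref{tower:6:K:Z}), suspend it by \(k\rho\) to obtain the cofiber sequence
\[
\Sigma^{k\rho} H\underline{\Z}^* \longrightarrow \Sigma^{k\rho} H\underline{\Z}(2,1)^* \longrightarrow \Sigma^{k\rho} H\underline{g},
\]
and read off the homotopy of the middle term from the resulting long exact sequence, using \cref{htpy:krho:Z*} for the homotopy of the left-hand term.

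First, I would identify \(\Sigma^{k\rho} H\underline{g} \simeq \Sigma^k H\underline{g}\). Since \(\underline{g}\) vanishes on every proper subgroup of \(\K\), each of the cofiber sequences \((\K/H_+ \to S^0 \to S^{\sigma^H})\wedge H\underline{g}\) for the three index-two subgroups of \(\K\) collapses to an equivalence \(\Sigma^{\sigma^H} H\underline{g} \simeq H\underline{g}\), because the induced term on the left has trivial homotopy. Smashing these together gives \(\Sigma^\rho H\underline{g} \simeq \Sigma^1 H\underline{g}\), and iterating yields \(\Sigma^{k\rho} H\underline{g} \simeq \Sigma^k H\underline{g}\), whose only nontrivial homotopy Mackey functor is \(\underline{g}\) in degree \(k\).

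Next, I would feed this into the long exact sequence in homotopy. By \cref{htpy:krho:Z*}, \(\ul\pi_n(\Sigma^{k\rho} H\underline{\Z}^*)\) is supported exactly in \([k+3, 4k]\). For \(n \in [k+3, 4k]\), both adjacent \(\Sigma^k H\underline{g}\)-terms vanish, giving the isomorphism \(\ul\pi_n(\Sigma^{k\rho} H\underline{\Z}(2,1)^*) \cong \ul\pi_n(\Sigma^{k\rho} H\underline{\Z}^*)\), which is the desired equality. For \(n \in \{k+1, k+2\}\) both neighboring groups from \(\Sigma^{k\rho} H\underline{\Z}^*\) vanish, so the middle group is zero. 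At \(n = k\), the sequence reduces to \(0 \to \ul\pi_k(\Sigma^{k\rho} H\underline{\Z}(2,1)^*) \to \underline{g} \to 0\), giving \(\ul\pi_k = \underline{g}\). For \(n < k\) all three terms vanish.

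The main technical point to check is the identification \(\Sigma^{k\rho} H\underline{g} \simeq \Sigma^k H\underline{g}\); once this is in place, the rest of the argument is bookkeeping in the long exact sequence, with every boundary degree forced by the known support of \(\ul\pi_*(\Sigma^{k\rho} H\underline{\Z}^*)\).
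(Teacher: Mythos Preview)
Your proposal is correct and follows essentially the same approach as the paper: both use the short exact sequence \(\ulZ^* \to \ulZ(2,1)^* \to \ulg\), pass to the cofiber sequence \(\Si{k\rho} H\ulZ^* \to \Si{k\rho} H\ulZ(2,1)^* \to \Si{k} H\ulg\), and read off the result from the long exact sequence together with \cref{htpy:krho:Z*}. The only difference is that you spell out the identification \(\Si{k\rho} H\ulg \simeq \Si{k} H\ulg\) explicitly, whereas the paper takes it for granted.
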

\begin{proof}
The exact sequence \(\ulZ^*\rightarrow \ulZ(2,1)^* \rightarrow \ulg\) and corresponding cofiber sequence \(\Si{k\rho} H\ulZ^* \rightarrow \Si{k\rho} H\ulZ(2,1)^* \rightarrow \Si{k} H\ulg\) provide us with a long exact sequence in homotopy. We then have that the homotopy of \(\Si{k\rho} H\ulZ(2,1)^*\) is the homotopy of \(\Si{k\rho} H\ulZ^*\) with an additional \(\ulg\) in degree \(k\).
\end{proof}

\begin{prop} \label{htpy:krho:mg*}
For \(k\geq 2\), the nontrivial homotopy Mackey functors of \(\Si{k\rho} H\ul{mg}^*\) are
\begin{align*}
    \ul\pi_n ( \Si{k\rho} H\ul{mg}^* ) = \left\lbrace \begin{array}{ll}
    \phi^*_{LDR} \ulF & n=2k\\
	\ulg^{3}  & n\in [k+2,2k-1] \\
	\ulg & n=k+1
    \end{array}\right.
\end{align*}
\end{prop}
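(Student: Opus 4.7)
By the equivalence $\Si{-\rho} H\ul m \simeq \Si{-2} H\ul{mg}^*$ of \cref{m:mg:equiv}, we have $\Si{k\rho} H\ul{mg}^* \simeq \Si{(k-1)\rho + 2} H\ul m$, so it suffices to compute $\ul\pi_{n-2}(\Si{(k-1)\rho} H\ul m)$. This parallels the strategy used in \cref{htpy:krho:Z*}.

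The dual of the short exact sequence $\ulg^2 \hookrightarrow \phi^*_{LDR}(\ulF^*) \twoheadrightarrow \ul m^*$ from \cref{-4k-2slice:K:Z} is the short exact sequence
\[
\ul m \hookrightarrow \phi^*_{LDR}\ulF \twoheadrightarrow \ulg^2.
\]
Applying $\Si{(k-1)\rho + 2} H(-)$ yields a cofiber sequence. Because $\downarrow^\K_H \ulg = \ul 0$ for every order two subgroup $H \le \K$, the spectrum $\K/H_+ \wedge H\ulg$ is contractible, so $\Si{\sigma^H} H\ulg \simeq H\ulg$ for each sign representation; iterating, $\Si{(k-1)\rho + 2} H\ulg^2 \simeq \Si{k+1} H\ulg^2$, which has only $\ulg^2$ in degree $k+1$.

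The homotopy of $\Si{(k-1)\rho + 2} H\phi^*_{LDR}\ulF$ is computed by a parallel argument, either from the short exact sequence $\ul{mg} \hookrightarrow \phi^*_{LDR}\ulF \twoheadrightarrow \ulg$ (dual to the sequence $\ulg \hookrightarrow \phi^*_{LDR}(\ulF^*) \twoheadrightarrow \ul{mg}^*$, which arises from the filtration $\ulg \subset \ulg^2 \subset \phi^*_{LDR}(\ulF^*)$) or by iteratively applying sign-representation cofiber sequences as in \cref{homotopy:-rho:M}. Feeding these inputs into the long exact sequence from the cofiber sequence above recovers $\ul\pi_*(\Si{(k-1)\rho + 2} H\ul m)$, and shifting degrees by $2$ produces the claim.

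The main obstacle is resolving the Mackey functor extensions output by the long exact sequence. Identifying $\ul\pi_n = \ulg^3$ at the middle degrees (as opposed to some nontrivial rank $3$ extension) and $\ul\pi_{2k} = \phi^*_{LDR}\ulF$ at the top degree requires tracking the restrictions to each cyclic subgroup against the $C_2$ computations in \cref{ksigma+r:homotopy:C2:Z}, together with the $\K$-automorphism symmetry permuting $L$, $D$, $R$, which forces the symmetric form of the answer.
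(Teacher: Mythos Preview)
Your opening move---using \cref{m:mg:equiv} to rewrite $\Si{k\rho} H\ul{mg}^*$ as $\Si{(k-1)\rho+2} H\ul m$---is exactly what the paper does. After that, however, the paper simply cites \cite[Proposition~7.3]{GY}, which already computes the homotopy Mackey functors of $\Si{j\rho} H\ul m$ (and hence of any integer shift thereof). So the entire computation you outline via the short exact sequence $\ul m \hookrightarrow \phi^*_{LDR}\ulF \twoheadrightarrow \ulg^2$ and the ensuing long exact sequence is work that \cite{GY} has already carried out.

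Your route is not wrong, and the ingredients you name (the $\ulg$-suspension invariance, the symmetry under $\mathrm{Aut}(\K)$, the restriction checks against \cref{ksigma+r:homotopy:C2:Z}) are the right tools for settling the extensions. But you correctly flag these extension problems as the main obstacle and then only sketch their resolution; in practice, pinning down that the middle degrees give $\ulg^3$ rather than a nontrivial extension, and that the top degree is $\phi^*_{LDR}\ulF$, requires exactly the kind of careful case analysis that \cite{GY} performs. Given that reference, there is no reason to redo it here: the paper's proof is one line after the equivalence.
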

\begin{proof}
We have the equivalence \(\Si{k\rho} H\ul{mg}^* \simeq \Si{(k-1)\rho+2} H\ul m\). The result then follows from \cite[Proposition 7.3]{GY}.
\end{proof}

\begin{prop} \label{htpy:-krho:mg}
For \(k\geq 2\), the nontrivial homotopy Mackey functors of \(\Si{-k\rho} H\ul{mg}\) are
\begin{align*}
	\ul\pi_{-n} ( \Si{-k\rho} H\ul{mg} ) = \left\lbrace \begin{array}{ll}
	\phi^*_{LDR} \ulF^* & n=2k \\
	\ulg^{3}  & n\in [k+2,2k-1] \\
	\ulg & n=k+1
	\end{array}\right.
\end{align*}
\end{prop}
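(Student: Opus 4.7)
The plan is to reduce to the already-computed homotopy of $\Si{k\rho} H\ul{mg}^*$ from \cref{htpy:krho:mg*} via Brown--Comenetz duality.

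First I would use \cref{m:mg:equiv} to rewrite $\Si{-k\rho} H\ul{mg} \simeq \Si{-(k-1)\rho-2} H\ul m^*$. Since $H\ul m$ is a torsion Eilenberg--MacLane spectrum, the example in \cref{BCandA:duality} gives $H\ul m^* \simeq I_{\Q/\Z} H\ul m$, and hence
\begin{align*}
\Si{-k\rho} H\ul{mg} \simeq I_{\Q/\Z}\bigl(\Si{(k-1)\rho+2} H\ul m\bigr).
\end{align*}
A second application of \cref{m:mg:equiv} identifies $\Si{(k-1)\rho+2} H\ul m \simeq \Si{k\rho} H\ul{mg}^*$, producing the key equivalence
\begin{align*}
\Si{-k\rho} H\ul{mg} \simeq I_{\Q/\Z}\bigl(\Si{k\rho} H\ul{mg}^*\bigr).
\end{align*}

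Next I would take homotopy. The identity $\ul\pi_{-n}(I_{\Q/\Z} X) \cong \ul\pi_n(X)^*$ on the level of Mackey functors gives $\ul\pi_{-n}(\Si{-k\rho} H\ul{mg}) \cong \ul\pi_n(\Si{k\rho} H\ul{mg}^*)^*$. Reading off \cref{htpy:krho:mg*} reduces everything to Pontryagin-dualizing the Mackey functors appearing there in the three degree ranges of the statement.

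Finally, I would identify these duals. By the naming convention used in \cref{tab-oldKMackey}, $(\phi^*_{LDR}\ulF)^* = \phi^*_{LDR}\ulF^*$; and $\ulg^n$ is concentrated at the apex $\K/\K$ with all restrictions and transfers zero, hence is canonically self-dual. Matching degrees yields the three cases $n=2k$, $n\in[k+2,2k-1]$, and $n=k+1$ as claimed. No step here presents a real obstacle: the content is entirely the bookkeeping between the two equivalences of \cref{m:mg:equiv}, Brown--Comenetz duality of torsion Eilenberg--MacLane spectra, and the self-duality of $\ulg^n$.
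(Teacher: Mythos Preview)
Your proposal is correct and follows essentially the same approach as the paper, which simply says ``the result follows by taking the Brown--Comenetz dual of each Mackey functor in \cref{htpy:krho:mg*}.'' Your detour through $\ul m$ via \cref{m:mg:equiv} is valid but unnecessary: since $\ul{mg}$ itself is torsion, the example in \cref{BCandA:duality} already gives $H\ul{mg} \simeq I_{\Q/\Z} H\ul{mg}^*$ directly, yielding $\Si{-k\rho} H\ul{mg} \simeq I_{\Q/\Z}\bigl(\Si{k\rho} H\ul{mg}^*\bigr)$ without passing through $\ul m$.
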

\begin{proof}
The result follows by taking the Brown-Comenetz dual of each Mackey functor in \cref{htpy:krho:mg*}.
\end{proof}

\begin{prop} \label{htpy:-krho:m}
For \(k\geq 2\), the nontrivial homotopy Mackey functors of \(\Si{-k\rho} H\ul m\) are
\begin{align*}
	\ul\pi_{-n} ( \Si{-k\rho} H\ul{m} ) = \left\lbrace \begin{array}{ll}
	\phi^*_{LDR} \ulF^* & n=2k \\
	\ulg^{3}  & n\in [k+2,2k-1] \\
	\ulg^2 & n=k+1
	\end{array}\right.
\end{align*}
\end{prop}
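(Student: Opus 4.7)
The plan is to use the short exact sequence of Mackey functors
\[
\ul m \hookrightarrow \phi_{LDR}^*\ulF \twoheadrightarrow \ulg^2,
\]
the Mackey functor dual of the sequence $\ulg^2 \hookrightarrow \phi_{LDR}^*\ulF^* \twoheadrightarrow \ul m^*$ that appears in the proof of \cref{tower:-3:K:Z}; explicitly, the injection is the diagonal $x \mapsto (x,x,x)$ at the $\K$-level and the identity at each of the $L$, $D$, $R$ levels. Smashing with $\Si{-k\rho} H$ produces a cofiber sequence
\[
\Si{-k\rho} H\ul m \to \Si{-k\rho} H\phi_{LDR}^*\ulF \to \Si{-k\rho} H\ulg^2.
\]

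First I would analyze the right-hand term. Since $\ulg^2 \simeq \phi_\K^*\F^2$ has contractible underlying spectrum, $\Si{-k\rho} H\ulg^2 \simeq \Si{-k} H\ulg^2$, whose homotopy is $\ulg^2$ concentrated in degree $-k$. Next, for the middle term, I plan to employ the wedge decomposition $\phi_{LDR}^*\ulF \cong \phi_L^*\ulF \oplus \phi_D^*\ulF \oplus \phi_R^*\ulF$ together with the identity $\rho_\K|_H = 2\rho_H$, reducing the calculation of $\ul\pi_{-n}(\Si{-k\rho} H\phi_{LDR}^*\ulF)$ to the $C_2$-equivariant computations collected in \cref{sec:C2}. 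The expected answer is that this homotopy is concentrated in the range $n \in [k+2, 2k]$: namely $\phi_{LDR}^*\ulF^*$ at $n = 2k$ and $\ulg^3$ for $n \in [k+2, 2k-1]$.

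With these inputs, the long exact sequence in homotopy determines the claimed Mackey functors of $\Si{-k\rho} H\ul m$. For $n \notin \{k, k+1\}$ the homotopy agrees with that of the middle term, while at $n = k+1$ the vanishing of $\ul\pi_{-k}$ and $\ul\pi_{-(k+1)}$ for the middle term forces the connecting map $\ulg^2 \to \ul\pi_{-(k+1)}(\Si{-k\rho} H\ul m)$ to be an isomorphism, so that the Mackey functor in degree $k+1$ is $\ulg^2$ rather than the single $\ulg$ appearing in the parallel computation of \cref{htpy:-krho:mg}. The main obstacle will be verifying the top-degree vanishing of $\Si{-k\rho} H\phi_{LDR}^*\ulF$ and confirming that the connecting map is genuinely injective (as opposed to having a kernel that would yield only a single $\ulg$ summand); both come down to careful bookkeeping of the $C_2$-level input against the Mackey functor structure of $\phi_{LDR}^*\ulF$.
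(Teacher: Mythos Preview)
Your argument is correct and complete: the short exact sequence $\ul m \hookrightarrow \phi_{LDR}^*\ulF \twoheadrightarrow \ulg^2$ is valid, the homotopy of the middle term is exactly what \cref{htpy:-krho:phiF} records (so your ``main obstacle'' is already handled there---in particular $\ul\pi_{-k}$ and $\ul\pi_{-(k+1)}$ of $\Si{-k\rho}H\phi_{LDR}^*\ulF$ vanish for $k\geq 2$), and the connecting map $\ulg^2 \to \ul\pi_{-(k+1)}(\Si{-k\rho}H\ul m)$ is forced to be an isomorphism by the long exact sequence since it is flanked by zeros on both sides. There is no bookkeeping subtlety left.

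The paper takes a different route: it quotes \cite[Proposition~7.4]{GY} for the positive-suspension homotopy of $H\ul{mg}$, Brown--Comenetz dualizes to obtain the homotopy of $\Si{-j\rho}H\ul{mg}^*$, and then invokes the equivalence $\Si{-\rho}H\ul m \simeq \Si{-2}H\ul{mg}^*$ from \cref{m:mg:equiv} to shift into the desired form. That approach is shorter on the page but leans on an external reference and on duality, whereas yours is self-contained within the paper (using only \cref{htpy:-krho:phiF} and an explicit cofiber sequence) and makes transparent exactly where the extra copy of $\ulg$ in degree $k+1$---the one distinguishing $\ul m$ from $\ul{mg}$---comes from.
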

\begin{proof}
First, take the Brown-Comenetz dual of the Mackey functors in \cite[Proposition 7.4]{GY}. The result then follows from the equivalence \(\Si{-\rho} H\ul m \simeq \Si{-2} H\ul{mg}^*\).
\end{proof}

\begin{prop} \label{htpy:krho:phiF*}
We have the equivalences
\begin{align*}
\Si{k\rho} H\phi^*_{LDR}\ulF^* \simeq \left\lbrace \begin{array}{ll}
	    \Si{2} H\phi^*_{LDR}\ul f & k=1 \\
	    \Si{4} H\phi^*_{LDR}\ulF & k=2
	\end{array}\right.
\end{align*}
Then for \(k\geq 3\), the nontrivial homotopy Mackey functors of \(\Si{k\rho} H\phi^*_{LDR} \ulF^*\) are
\begin{align*}
	\ul\pi_{n} ( \Si{k\rho} H\phi^*_{LDR} \ulF^* ) = \left\lbrace \begin{array}{ll}
	\phi^*_{LDR} \ulF & n=2k\\
	\ulg^{3}  & n\in [k+2,2k-1]
	\end{array}\right.
\end{align*}
\end{prop}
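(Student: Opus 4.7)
The proof splits naturally into three cases: the base cases $k=1$ and $k=2$, which are equivalences of Eilenberg-MacLane spectra, and the general case $k \geq 3$ for which we compute homotopy Mackey functors.

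For $k=1$, I would verify $\Si{\rho}H\phi^*_{LDR}\ulF^* \simeq \Si{2}H\phi^*_{LDR}\ul f$ by computing both sides. The right-hand side has homotopy concentrated in degree $2$, equal to $\phi^*_{LDR}\ul f$. For the left-hand side, build up $\Si{\rho} = \Si{1+\alpha+\beta+\gamma}$ by applying the three cofiber sequences $(\K/H_+ \to S^0 \to S^{\sigma^H}) \wedge X$ iteratively for $H \in \{L, D, R\}$ with $\sigma^H \in \{\beta, \gamma, \alpha\}$, starting from $\Si{1}H\phi^*_{LDR}\ulF^*$. Since $(\phi^*_{LDR}\ulF^*)\downarrow_H \cong \ulg$ for each proper order-two subgroup $H$ (by direct inspection of the Lewis diagram), the induced pieces $\K/H_+ \wedge (-)$ simplify substantially and the long exact sequences collapse to give homotopy concentrated in the expected single degree. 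The $k=2$ case then follows by suspending the $k=1$ equivalence by $\rho$ and further verifying $\Si{\rho+2}H\phi^*_{LDR}\ul f \simeq \Si{4}H\phi^*_{LDR}\ulF$ via an entirely parallel cofiber-sequence argument.

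For $k \geq 3$, I would first pin down the homotopy Mackey functors at the proper-subgroup levels using restriction. Since $(\phi^*_{LDR}\ulF^*)\downarrow_H \cong \ulg$ and $\rho_\K\!\downarrow_H = 2\rho_{C_2}$, by \cref{C2:equiv}(\ref{C2:g:2nslice}) we have
\[
(\Si{k\rho}H\phi^*_{LDR}\ulF^*)\!\downarrow_H \simeq \Si{2k\rho_{C_2}}H_{C_2}\ulg \simeq \Si{2k}H_{C_2}\ulg,
\]
whose only nonzero homotopy is $\ulg$ in degree $2k$. This shows $\pi_n$ at each proper-subgroup level is $\F$ when $n=2k$ and $0$ otherwise, which agrees with $\phi^*_{LDR}\ulF$ for $n = 2k$ and $\ulg^3$ for $n \in [k+2,2k-1]$. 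The remaining task is to compute $\pi_n^\K$ in each degree; I would proceed by induction on $k$, reducing via the cofiber sequences with $\K/L_+$, $\K/R_+$, $\K/D_+$ to compute $\pi_n^\K$ in terms of the previously-computed (lower) values, exactly following the strategy of \cref{krho:fixedpts:K:Z}. The base cases $k=1,2$ from the first paragraph feed into this induction, and the ranges in the claimed formula for $[k+2,2k-1]$ are consistent with the dimension arithmetic from those long exact sequences.

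The main obstacle is identifying the \emph{Mackey-functor structure} on $\pi_{2k}^\K \cong \F^3$, rather than merely the underlying dimension: one must verify that the three restriction maps from $\pi_{2k}^\K$ down to the $\F$'s at the proper subgroup levels really are (up to isomorphism) the standard projections $p_1, p_2, p_3$, recovering $\phi^*_{LDR}\ulF$ rather than some other extension. I would handle this by exploiting the $S_3$-symmetry: the Mackey functor $\phi^*_{LDR}\ulF^*$ is invariant under the automorphisms of $\K$ permuting $\{L, D, R\}$, so the same holds for all its homotopy, which combined with the known restrictions forces the projection structure.
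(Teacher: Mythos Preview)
Your approach is correct but far more laborious than the paper's. The paper observes that $\phi^*_{LDR}\ulF^*$ is, by definition, the direct sum $\phi^*_L\ulF^* \oplus \phi^*_D\ulF^* \oplus \phi^*_R\ulF^*$ of three geometrically inflated $C_2$-Mackey functors, so $H_\K\phi^*_{LDR}\ulF^* \simeq \bigvee_{H\in\{L,D,R\}} \phi^*_H(H_{C_2}\ulF^*)$. Since suspension by a sign representation on which $H$ acts nontrivially is invisible to $\phi^*_H$-spectra, one has $\Si{k\rho_\K}\phi^*_H(H_{C_2}\ulF^*) \simeq \phi^*_H\bigl(\Si{k\rho_{C_2}}H_{C_2}\ulF^*\bigr)$, and the homotopy Mackey functors are obtained by summing the pullbacks over $L$, $D$, $R$ of the $C_2$-Mackey functors computed in \cite[Proposition~3.6]{GY}. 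That is the entire proof.

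Your inductive cofiber-sequence route works in principle and has the merit of being self-contained rather than citing an external $C_2$-computation. However, the step where you identify $\ul\pi_{2k}$ as $\phi^*_{LDR}\ulF$ is underspecified: knowing $\dim\pi_{2k}^\K = 3$ and $S_3$-equivariance does not by itself determine the restriction and transfer maps. You would still need to extract from the long exact sequences that each $\res{\K}{H}$ is surjective with pairwise distinct kernels (as is done, for instance, in the proof of \cref{-4kslices:K:Z}), and that the transfers vanish. The pullback decomposition sidesteps this entirely, since the full Mackey-functor structure is inherited wholesale from the $C_2$-case.
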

\begin{proof}
This is the pullback over \(L\), \(D\), and \(R\) of the Mackey functors in \cite[Proposition 3.6]{GY}.
\end{proof}

\begin{prop} \label{htpy:-krho:phiF}
We have the equivalences
\begin{align*}
\Si{-k\rho} H\phi^*_{LDR}\ulF \simeq \left\lbrace \begin{array}{ll}
	    \Si{-2} H\phi^*_{LDR}\ul f & k=1 \\
	    \Si{-4} H\phi^*_{LDR}\ulF^* & k=2
	\end{array}\right.
\end{align*}
Then for \(k\geq 3\), the nontrivial homotopy Mackey functors of \(\Si{-k\rho} H\phi^*_{LDR} \ulF\) are
\begin{align*}
	\ul\pi_{-n} ( \Si{-k\rho} H\phi^*_{LDR} \ulF ) = \left\lbrace \begin{array}{ll}
	    \phi^*_{LDR} \ulF^* & n=2k\\
		\ulg^{3}  & n\in [k+2,2k-1]
	\end{array}\right.
\end{align*}
\end{prop}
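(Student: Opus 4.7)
My plan is to derive this proposition from \cref{htpy:krho:phiF*} by applying Brown-Comenetz duality, following the same template used in the proofs of \cref{htpy:-krho:mg} and \cref{htpy:-krho:m}. The starting observation is that, for a torsion Mackey functor \(\ul M\), one has \(I_{\Q/\Z} H\ul M \simeq H\ul M^*\), so that
\begin{align*}
    I_{\Q/\Z} \Si{k\rho} H\phi^*_{LDR}\ulF^* \simeq \Si{-k\rho} H(\phi^*_{LDR}\ulF^*)^*,
\end{align*}
and by \cref{duality:prop} we obtain \(\pi_{-n}(\Si{-k\rho} H\phi^*_{LDR}\ulF) \cong \pi_n(\Si{k\rho} H\phi^*_{LDR}\ulF^*)^*\). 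The key Mackey-functor identifications I need are the self-duality \(\ulg^* \cong \ulg\) (its only nonzero group sits at the top with all structure maps zero, so the dual has the same Lewis diagram) and \((\phi^*_{LDR}\ulF^*)^* \cong \phi^*_{LDR}\ulF\) (computed level-by-level using that \(\ulF^*\) is dual to \(\ulF\) as a \(C_2\)-Mackey functor and that all nonzero values are copies of \(\F\)).

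First I will handle the equivalences for \(k = 1, 2\) by dualizing the equivalences \(\Si{\rho} H\phi^*_{LDR}\ulF^* \simeq \Si{2} H\phi^*_{LDR}\ul f\) and \(\Si{2\rho} H\phi^*_{LDR}\ulF^* \simeq \Si{4} H\phi^*_{LDR}\ulF\) recorded in \cref{htpy:krho:phiF*}. This step requires the self-duality \((\phi^*_{LDR}\ul f)^* \cong \phi^*_{LDR}\ul f\), which holds because the only nonzero levels of \(\phi^*_{LDR}\ul f\) sit at \(L, D, R\) with every incoming and outgoing structure map zero, so dualization is formal.

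Next, for \(k \geq 3\), I read off the generic homotopy Mackey functors by dualizing the corresponding Mackey functors listed in \cref{htpy:krho:phiF*}: the self-duality of \(\ulg^3\) yields \(\pi_{-n}(\Si{-k\rho} H\phi^*_{LDR}\ulF) = \ulg^3\) for \(n \in [k+2, 2k-1]\), and the duality \((\phi^*_{LDR}\ulF)^* \cong \phi^*_{LDR}\ulF^*\) yields \(\pi_{-2k} = \phi^*_{LDR}\ulF^*\).

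The main obstacle is the bookkeeping around Brown-Comenetz duals of the pulled-back Mackey functors \(\phi^*_{LDR}\ul f\), \(\phi^*_{LDR}\ulF\), and \(\phi^*_{LDR}\ulF^*\): one must check level-by-level which structure maps are transposed and verify the resulting Lewis diagram matches the claimed Mackey functor. Because all nonzero groups are copies of \(\F\) and the relevant structure maps are either zero or explicit projections and multiplications by \(1\), this check reduces to a routine transpose computation, after which the proposition follows immediately from \cref{htpy:krho:phiF*}.
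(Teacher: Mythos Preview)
Your approach is correct but differs from the paper's. The paper proves \cref{htpy:-krho:phiF} exactly as it proves \cref{htpy:krho:phiF*}: by observing that \(H\phi^*_{LDR}\ulF\) is obtained by pulling back along the three quotients \(\K \to \K/H \cong C_2\), so the homotopy Mackey functors of \(\Si{-k\rho} H\phi^*_{LDR}\ulF\) are the pullback over \(L,D,R\) of the corresponding \(C_2\)-computation, which is \cite[Proposition~3.7]{GY}. You instead mirror the template of \cref{htpy:-krho:mg}, applying Brown--Comenetz duality to \cref{htpy:krho:phiF*} and checking the easy self-dualities \((\ulg)^* \cong \ulg\), \((\phi^*_{LDR}\ul f)^* \cong \phi^*_{LDR}\ul f\), and \((\phi^*_{LDR}\ulF)^* \cong \phi^*_{LDR}\ulF^*\). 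Both arguments are one-liners given the infrastructure already in place; the paper's route is parallel to its proof of \cref{htpy:krho:phiF*} and avoids re-invoking duality, while yours stays entirely internal to the paper (no second citation to \cite{GY}) and is arguably more in keeping with the pattern established by \cref{htpy:-krho:mg} and \cref{htpy:-krho:m}.
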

\begin{proof}
This is the pullback over \(L\), \(D\), and \(R\) of the Mackey functors in \cite[Proposition 3.7]{GY}.
\end{proof}

%%%%%%%%%%%%%%%%%%%%%%%%%%%%%%%%%%%%%
\section{Spectral Sequences} \label{sec:SSS}
%%%%%%%%%%%%%%%%%%%%%%%%%%%%%%%%%%%%%

\sseqset{
Fclass/.sseq style = {fill = black, diamond, draw, inner sep = 0.1ex},
classes= {fill, inner sep = 0pt, minimum size = 0.22em},
class labels={below=2pt}, 
differentials=black,
class pattern=linear, 
class placement transform = { rotate = 225, scale = 1.2 },
%classes={ tooltip = {(\xcoord,\ycoord)} }, 
run off differentials = ->, 
grid = go, 
grid color = gray!30,%!white
% \ulZ
Zclass/.sseq style={fill=black, rectangle, draw,inner sep=0.4ex},
% \ulZ*
Zstarclass/.sseq style={fill=none, rectangle, draw,inner sep=0.4ex},
% \ulg^n
Z4class/.sseq style={fill=none, circle, draw,inner sep=0.28ex},
Z4classSource/.sseq style={circle, inner sep = 0.28ex, draw, fill = red!70},
Z4classTarget/.sseq style={circle, inner sep = 0.28ex, draw, fill = black!40},
% \phi^*_{LDR} \ulF^*
FstarLDRclass/.sseq style = {fill = none, regular polygon, sides=5, draw, inner sep = 0.4ex},
% \phi^*_{LDR} \ulF^*
FLDRclass/.sseq style = {fill = black, regular polygon, regular polygon sides=5, draw, inner sep = 0.4ex},
%  \ul mg^*
mgstarclass/.sseq style = {fill = none, trapezium, draw, inner sep = 0.25ex},
%  \ul mg
mgclass/.sseq style = {fill = black, trapezium, draw, inner sep = 0.25ex},
% \ul m^*
mstarclass/.sseq style = {fill = none, regular polygon, regular polygon sides=3, draw, inner sep = 0.4ex},
}

\begin{sseqdata}[name=SI-1K,
y range={-10}{0}, 
x range={-10}{0}, 
y tick step = 2,
x tick step = 2,
struct lines = blue,
%axes type = center,
%axes gap = -0.09cm,
%x tick gap = -0.2cm,
%y tick gap = -0.21cm,
%    Adams grading,
%title=Page \page,
classes= {fill, inner sep = 0pt, minimum size = 0.2em},
class labels={below=2pt}, 
differentials=black,
class pattern=linear, 
class placement transform = { rotate = 135, scale = 2 },
%classes={ tooltip = {(\xcoord,\ycoord)} }, 
xscale=0.6,
yscale = 0.6,
run off differentials = ->, 
grid = go, 
grid color = gray!30%!white
]

\class[Zstarclass, "\phantom{*}" {inside,font=\tiny}](-1,-1)
\class[mstarclass](-1,-2)
\class[Z4class,"1" {inside,font=\tiny}](-1,-3)

\structline[dashed](-1,-3)(-1,-2)
\structline[dashed](-1,-2)(-1,-1)

\draw[fill=white](-9.9,-1.1) rectangle (-6.4,0.0);
\node at (-8.1,-0.6) {\Si{-1}H_{\K}\ulZ};

\end{sseqdata}

\begin{sseqdata}[name=SI-3K,
y range={-10}{0}, 
x range={-10}{0}, 
y tick step = 2,
x tick step = 2,
struct lines = blue,
%    Adams grading,
%title=Page \page,
classes= {fill, inner sep = 0pt, minimum size = 0.2em},
class labels={below=2pt}, 
differentials=black,
class pattern=linear, 
class placement transform = { rotate = 135, scale = 2 },
%classes={ tooltip = {(\xcoord,\ycoord)} }, 
xscale=0.6,
yscale = 0.6,
run off differentials = ->, 
grid = go, 
grid color = gray!30%!white
]

% not g
\class[Zstarclass, "\phantom{*}" {inside,font=\tiny}](-3,0)
\class[FstarLDRclass](-3,-3)	

% g
\class[Z4class,"1" {inside,font=\tiny}](-2,-6)
\class[Z4class,"2" {inside,font=\tiny}](-3,-9)

\d2(-2,-6)(-3,-3)	

\structline[dashed](-3,-9)(-3,-3)
\structline[dashed](-3,-3)(-3,0)

\draw[fill=white](-9.9,-1.1) rectangle (-6.4,0.0);
\node at (-8.1,-0.6) {\Si{-3}H_{\K}\ulZ};

\end{sseqdata}

\begin{sseqdata}[name=SI-5K,
y range={-16}{0}, 
x range={-10}{0}, 
y tick step = 2,
x tick step = 2,
struct lines = blue,
%    Adams grading,
%title=Page \page,
classes= {fill, inner sep = 0pt, minimum size = 0.2em},
class labels={below=2pt}, 
differentials=black,
class pattern=linear, 
class placement transform = { rotate = 135, scale = 2 },
%classes={ tooltip = {(\xcoord,\ycoord)} }, 
xscale=0.6,
yscale = 0.6,
run off differentials = ->, 
grid = go, 
grid color = gray!30%!white
]

% not g
\class[Zstarclass, "\phantom{*}" {inside,font=\tiny}](-5,0)
\class[mgstarclass, "\phantom{*}" {inside,font=\tiny}](-4,-1)
\class[FstarLDRclass](-3,-3)
\class[FstarLDRclass](-5,-5)	

% g
\foreach \x in {3,...,1} {
	\class[Z4class, "\x" {inside,font=\tiny}](-6+\x,-18+3*\x)
}
\class[Z4class,"1" {inside,font=\tiny}](-3,-2)
\class[Z4class,"1" {inside,font=\tiny}](-2,-4)
\class[Z4class,"1" {inside,font=\tiny}](-2,-6)
\class[Z4class,"3" {inside,font=\tiny}](-4,-6)

\d2(-2,-4)(-3,-2)	
\d2(-3,-3)(-4,-1)
\replacesource
\d3(-2,-6)(-3,-3)
\d3(-3,-9)(-4,-6)
\d7(-4,-12)(-5,-5)

\structline[dashed](-5,-15)(-5,-5)
\structline[dashed](-5,-5)(-5,0)

\draw[fill=white](-9.9,-1.1) rectangle (-6.4,0.0);
\node at (-8.1,-0.6) {\Si{-5}H_{\K}\ulZ};

\end{sseqdata}

\begin{sseqdata}[name=SI-7K,
y range={-28}{0}, 
x range={-9}{0}, 
y tick step = 2,
x tick step = 2,
struct lines = blue,
%    Adams grading,
%title=Page \page,
classes= {fill, inner sep = 0pt, minimum size = 0.2em},
class labels={below=2pt}, 
differentials=black,
class pattern=linear, 
class placement transform = { rotate = 135, scale = 2 },
%classes={ tooltip = {(\xcoord,\ycoord)} }, 
xscale=0.6,
yscale = 0.6,
run off differentials = ->, 
grid = go, 
grid color = gray!30%!white
]

% not g
\class[Zstarclass, "\phantom{*}" {inside,font=\tiny}](-7,0)
\class[mgstarclass, "\phantom{*}" {inside,font=\tiny}](-6,-1)
\class[FstarLDRclass](-5,-5)
\class[FstarLDRclass](-7,-7)	

% g
\class[Z4class,"1" {inside,font=\tiny}](-5,-2)

% diagonal g's
\foreach \x in {1,...,4} {
	\class[Z4class, "\x" {inside,font=\tiny}](-8+\x,-24+3*\x)
}
\foreach \x in {2,...,3} {
\class[Z4class, "\x" {inside,font=\tiny}](-1-\x,-15+3*\x)
}
\foreach \x in {0,...,1} {
	\class[Z4class, "3" {inside,font=\tiny}](-6+\x,-8-\x)
}

\d3(-3,-9)(-4,-6)
\replacetarget
\d3(-4,-12)(-5,-9)
\replacesource
\d4(-5,-5)(-6,-1)
\replacesource
\d4(-4,-6)(-5,-2)
\d7(-4,-12)(-5,-5)
\d7(-5,-15)(-6,-8)
\d11(-6,-18)(-7,-7)

\structline[dashed](-7,-21)(-7,-7)
\structline[dashed](-7,-7)(-7,0)

\draw[fill=white](-3.9,-1.1) rectangle (-0.4,0.0);
\node at (-2.1,-0.6) {\Si{-7}H_{\K}\ulZ};

\end{sseqdata}

\begin{sseqdata}[name=SI-9K,
y range={-28}{0}, 
x range={-11}{0}, 
y tick step = 2,
x tick step = 2,
struct lines = blue,
%    Adams grading,
%title=Page \page,
classes= {fill, inner sep = 0pt, minimum size = 0.2em},
class labels={below=2pt}, 
differentials=black,
class pattern=linear, 
class placement transform = { rotate = 135, scale = 2 },
%classes={ tooltip = {(\xcoord,\ycoord)} }, 
xscale=0.6,
yscale = 0.6,
run off differentials = ->, 
grid = go, 
grid color = gray!30%!white
]

\class[Zstarclass, "\phantom{*}" {inside,font=\tiny}](-9,0)
\class[mgstarclass, "\phantom{*}" {inside,font=\tiny}](-8,-1)

% phiF
\class[FstarLDRclass](-6,-3)
\foreach \x in {0,1,2} {
\class[FstarLDRclass](-9+2*\x,-9+2*\x)
}

% miscellaneous g's
\class[Z4class,"1" {inside,font=\tiny}](-3,-7)	
\class[Z4class,"1" {inside,font=\tiny}](-3,-9)
\class[Z4class,"3" {inside,font=\tiny}](-4,-6)	
\class[Z4class,"3" {inside,font=\tiny}](-4,-12)
\class[Z4class,"1" {inside,font=\tiny}](-4,-5)	
\class[Z4class,"2" {inside,font=\tiny}](-5,-4)
\class[Z4class,"1" {inside,font=\tiny}](-7,-2)

% diagonal g's
\foreach \x in {1,...,5} {
	\class[Z4class, "\x" {inside,font=\tiny}](-10+\x,-30+3*\x)
}
\foreach \x in {0,...,2} {
\class[Z4class, "3" {inside,font=\tiny}](-6-\x,-12+\x)
}
\foreach \x in {0,...,1} {
	\class[Z4class, "3" {inside,font=\tiny}](-5-\x,-9+\x)
}

% differentials
\d2(-4,-6)(-5,-4) 
\replacesource
\d3(-4,-9-3*1)(-4-1,-6-3*1)
\d2(-3-2,-7+2)(-4-2,-5+2)
\d3(-3-2,-9-3*2)(-4-2,-6-3*2)
\replacesource
\foreach \x in {0} {
%\ifthenelse{\equal{\x}{0}}{
%\d2(-3-\x,-7+\x)(-4-\x,-5+\x) 
%\replacesource}{
\d2(-3-\x,-7+\x)(-4-\x,-5+\x)
%\replacesource
\d3(-3-\x,-9-3*\x)(-4-\x,-6-3*\x)
%\replacesource
%	}	
}
\foreach \x in {0,1} {
\d6(-6-\x,-8+\x)(-7-\x,-2+\x)
\replacesource
\d7(-5-\x,-15-3*\x)(-6-\x,-8-3*\x)
\replacesource
}
\d11(-6,-18)(-7,-7)
\d11(-7,-21)(-8,-10)
\d15(-8,-24)(-9,-9)

\structline[dashed](-9,-27)(-9,-9)
\structline[dashed](-9,-9)(-9,0)

\draw[fill=white](-3.9,-1.1) rectangle (-0.4,0.0);
\node at (-2.1,-0.6) {\Si{-9}H_{\K}\ulZ};

\end{sseqdata}

\begin{sseqdata}[name=SI6K,
y range={0}{10}, 
x range={0}{10}, 
y tick step = 2,
x tick step = 2,
struct lines = blue,
%    Adams grading,
%title=Page \page,
classes= {fill, inner sep = 0pt, minimum size = 0.2em},
class labels={below=2pt}, 
differentials=black,
class pattern=linear, 
class placement transform = { rotate = 135, scale = 2 },
%classes={ tooltip = {(\xcoord,\ycoord)} }, 
xscale=0.6,
yscale = 0.6,
run off differentials = ->, 
grid = go, 
grid color = gray!30%!white
]

% not g
\class[Zclass, "\phantom{*}" {inside,font=\tiny}](6,0)

% g
\class[Z4class,"1" {inside,font=\tiny}](2,6)
\class[Z4class,"1" {inside,font=\tiny}](3,3)	

\d3(3,3)(2,6)

\draw[fill=white](7.5,8.5) rectangle (10.3,9.6);
\node at (8.9,9.0) {\Si{6}H_{\K}\ulZ};

\end{sseqdata}

\begin{sseqdata}[name=SI7K,
y range={0}{10}, 
x range={0}{10}, 
y tick step = 2,
x tick step = 2,
struct lines = blue,
%    Adams grading,
%title=Page \page,
classes= {fill, inner sep = 0pt, minimum size = 0.2em},
class labels={below=2pt}, 
differentials=black,
class pattern=linear, 
class placement transform = { rotate = 135, scale = 2 },
%classes={ tooltip = {(\xcoord,\ycoord)} }, 
xscale=0.6,
yscale = 0.6,
run off differentials = ->, 
grid = go, 
grid color = gray!30%!white
]

% not g
\class[Zclass, "\phantom{*}" {inside,font=\tiny}](7,0)
\class[mgclass, "\phantom{*}" {inside,font=\tiny}](5,2)
\class[FLDRclass](4,4)	

% g
\class[Z4class,"1" {inside,font=\tiny}](4,3)
\class[Z4class,"1" {inside,font=\tiny}](3,5)
\class[Z4class,"1" {inside,font=\tiny}](3,9)

\d2(5,2)(4,4)
\replacetarget
\d2(4,3)(3,5)
\d5(4,4)(3,9)

\draw[fill=white](7.5,8.5) rectangle (10.3,9.6);
\node at (8.9,9) {\Si{7}H_{\K}\ulZ};

\end{sseqdata}

\begin{sseqdata}[name=SI8K,
y range={0}{13}, 
x range={0}{10}, 
y tick step = 2,
x tick step = 2,
struct lines = blue,
%    Adams grading,
%title=Page \page,
classes= {fill, inner sep = 0pt, minimum size = 0.2em},
class labels={below=2pt}, 
differentials=black,
class pattern=linear, 
class placement transform = { rotate = 135, scale = 2 },
%classes={ tooltip = {(\xcoord,\ycoord)} }, 
xscale=0.6,
yscale = 0.6,
run off differentials = ->, 
grid = go, 
grid color = gray!30%!white
]

% not g
\class[Zclass, "\phantom{*}" {inside,font=\tiny}](8,0)
\class[mgclass, "\phantom{*}" {inside,font=\tiny}](6,2)
\class[FLDRclass](5,5)	

% g
\class[Z4class,"3" {inside,font=\tiny}](4,6)
\class[Z4class,"2" {inside,font=\tiny}](3,9)
\class[Z4class,"1" {inside,font=\tiny}](5,3)	%\replacetarget
\d3(5,3)(4,6)
\d3(4,6)(3,9)
%\class[Z4class,"3" {inside,font=\tiny}](4,6)

\class[Z4class,"1" {inside,font=\tiny}](4,12)

\d3(6,2)(5,5)
\replacetarget
\d7(5,5)(4,12)

\draw[fill=white](7.5,9.5) rectangle (10.3,10.6);
\node at (8.9,10) {\Si{8}H_{\K}\ulZ};

\end{sseqdata}

\begin{sseqdata}[name=SI10K,
y range={0}{19}, 
x range={0}{12}, 
y tick step = 2,
x tick step = 2,
struct lines = blue,
%    Adams grading,
%title=Page \page,
classes= {fill, inner sep = 0pt, minimum size = 0.2em},
class labels={below=2pt}, 
differentials=black,
class pattern=linear, 
class placement transform = { rotate = 135, scale = 2 },
%classes={ tooltip = {(\xcoord,\ycoord)} }, 
xscale=0.6,
yscale = 0.6,
run off differentials = ->, 
grid = go, 
grid color = gray!30%!white
]

% not g
\class[Zclass, "\phantom{*}" {inside,font=\tiny}](10,0)
\class[mgclass, "\phantom{*}" {inside,font=\tiny}](8,2)
\class[FLDRclass](7,7)	

% g
\class[Z4class,"1" {inside,font=\tiny}](6,18)
\class[Z4class,"2" {inside,font=\tiny}](5,15)
\class[Z4class,"1" {inside,font=\tiny}](3,9)
\class[Z4class,"3" {inside,font=\tiny}](6,8)
\class[Z4class,"3" {inside,font=\tiny}](4,12)
\class[Z4class,"3" {inside,font=\tiny}](5,9)
\class[Z4class,"1" {inside,font=\tiny}](4,6)
\class[Z4class,"1" {inside,font=\tiny}](7,3)

\d3(4,6)(3,9)
\d3(5,9)(4,12)
\d5(7,3)(6,8)
\replacetarget

\d7(6,8)(5,15)
%\d5(6,10)(5,15)

\d5(8,2)(7,7)
\replacetarget
\d11(7,7)(6,18)
%\replacetarget

%\d9(7,9)(6,18)
%\d13(8,8)(7,21)

\draw[fill=white](9.5,18.5) rectangle (12.3,19.6);
\node at (10.9,19.0) {\Si{10}H_{\K}\ulZ};

\end{sseqdata}

\begin{sseqdata}[name=SI11K,
y range={0}{21}, 
x range={0}{13}, 
y tick step = 2,
x tick step = 2,
struct lines = blue,
%    Adams grading,
%title=Page \page,
classes= {fill, inner sep = 0pt, minimum size = 0.2em},
class labels={below=2pt}, 
differentials=black,
class pattern=linear, 
class placement transform = { rotate = 135, scale = 2 },
%classes={ tooltip = {(\xcoord,\ycoord)} }, 
xscale=0.6,
yscale = 0.6,
run off differentials = ->, 
grid = go, 
grid color = gray!30%!white
]

% not g
\class[Zclass, "\phantom{*}" {inside,font=\tiny}](11,0)
\class[mgclass, "\phantom{*}" {inside,font=\tiny}](9,2)
\class[FLDRclass](7,4)	
\class[FLDRclass](8,8)	
\class[FLDRclass](6,6)	

% g
\class[Z4class,"1" {inside,font=\tiny}](7,21)
\class[Z4class,"2" {inside,font=\tiny}](6,18)
\class[Z4class,"3" {inside,font=\tiny}](5,15)
\class[Z4class,"3" {inside,font=\tiny}](7,9)
\class[Z4class,"3" {inside,font=\tiny}](6,10)
\class[Z4class,"1" {inside,font=\tiny}](4,12)
\class[Z4class,"3" {inside,font=\tiny}](5,7)
\class[Z4class,"1" {inside,font=\tiny}](4,8)
\class[Z4class,"1" {inside,font=\tiny}](8,3)
\class[Z4class,"2" {inside,font=\tiny}](6,5)
\class[Z4class,"1" {inside,font=\tiny}](5,6)

\d2(5,6)(4,8)
\d2(7,4)(6,6)
\d2(6,5)(5,7)
\replacetarget

\d5(5,7)(4,12)
\d5(6,10)(5,15)

\d6(8,3)(7,9)
\replacetarget
\d6(9,2)(8,8)
\replacetarget

\d9(7,9)(6,18)
\d13(8,8)(7,21)

\draw[fill=white](10.5,20.5) rectangle (13.3,21.6);
\node at (11.9,21.0) {\Si{11}H_{\K}\ulZ};

\end{sseqdata}

\begin{sseqdata}[name=SI12K,
y range={0}{25}, 
x range={0}{12}, 
y tick step = 2,
x tick step = 2,
struct lines = blue,
%    Adams grading,
%title=Page \page,
classes= {fill, inner sep = 0pt, minimum size = 0.2em},
class labels={below=2pt}, 
differentials=black,
class pattern=linear, 
class placement transform = { rotate = 135, scale = 2 },
%classes={ tooltip = {(\xcoord,\ycoord)} }, 
xscale=0.6,
yscale = 0.6,
run off differentials = ->, 
grid = go, 
grid color = gray!30%!white
]

% not g
\class[Zclass, "\phantom{*}" {inside,font=\tiny}](12,0)
\class[mgclass, "\phantom{*}" {inside,font=\tiny}](10,2)
\class[FLDRclass](8,4)	
\class[FLDRclass](9,9)	
\class[FLDRclass](7,7)	

% g
\foreach \x in {1,...,4} {
	\class[Z4class, "\x" {inside,font=\tiny}](9-\x,27-3*\x)
}
\foreach \x in {0,...,2} {
	\class[Z4class, "3" {inside,font=\tiny}](8-\x,10+\x)
}
\class[Z4class,"1" {inside,font=\tiny}](9,3)
\class[Z4class,"2" {inside,font=\tiny}](7,5)
\class[Z4class,"1" {inside,font=\tiny}](6,6)
\class[Z4class,"2" {inside,font=\tiny}](4,12)
\foreach \x in {0,...,1} {
	\class[Z4class, "3" {inside,font=\tiny}](6-\x,8+\x)
}

% differentials
\d3(6,12)(5,15)
\replacetarget
\d6(7,11)(6,18)

\d7(9,3)(8,10)
\replacetarget
\d12(8,10)(7,21)
\d7(10,2)(9,9)
\replacetarget
\d15(9,9)(8,24)

\d3(6,6)(5,9)
\replacesource
\d3(5,9)(4,12)
\foreach \x in {1,...,2} {
    \d3(6+\x,6-\x)(5+\x,9-\x)
    \replacetarget 
}
\d7(6,8)(5,15)

\draw[fill=white](1.5,23.5) rectangle (4.3,24.6);
\node at (2.9,24.0) {\Si{12}H_{\K}\ulZ};

\end{sseqdata}

\begin{sseqdata}[name=SI14K,
y range={0}{31}, 
x range={0}{14}, 
y tick step = 2,
x tick step = 2,
struct lines = blue,
%    Adams grading,
%title=Page \page,
classes= {fill, inner sep = 0pt, minimum size = 0.2em},
class labels={below=2pt}, 
differentials=black,
class pattern=linear, 
class placement transform = { rotate = 135, scale = 2 },
%classes={ tooltip = {(\xcoord,\ycoord)} }, 
xscale=0.6,
yscale = 0.6,
run off differentials = ->, 
grid = go, 
grid color = gray!30%!white
]

% not g
\class[Zclass, "\phantom{*}" {inside,font=\tiny}](14,0)
\class[mgclass, "\phantom{*}" {inside,font=\tiny}](12,2)
\class[FLDRclass](10,4)	
\class[FLDRclass](11,11)	
\class[FLDRclass](9,9)	

% g
\foreach \x in {1,...,5} {
	\class[Z4class, "\x" {inside,font=\tiny}](11-\x,33-3*\x)
}
\foreach \x in {0,...,3} {
	\class[Z4class, "3" {inside,font=\tiny}](10-\x,12+\x)
}
\class[Z4class,"1" {inside,font=\tiny}](11,3)
\class[Z4class,"2" {inside,font=\tiny}](9,5)
\class[Z4class,"1" {inside,font=\tiny}](8,6)
\class[Z4class,"1" {inside,font=\tiny}](4,12)
\class[Z4class,"3" {inside,font=\tiny}](5,15)
\class[Z4class,"1" {inside,font=\tiny}](5,9)
\foreach \x in {0,...,2} {
	\class[Z4class, "3" {inside,font=\tiny}](8-\x,10+\x)
}

% differentials
\d9(12,2)(11,11)
\replacetarget
\d19(11,11)(10,30)
\d9(11,3)(10,12)
\replacetarget
\d16(10,12)(9,27)

\foreach \x in {0,...,1} {
    \d3\x(5+\x,9+3*\x)(4+\x,12+3*\x)
    \replacetarget
}
\d3(7,15)(6,18)
\replacetarget

\d11(9,13)(8,24)
\d7(8,14)(7,21)
\replacetarget

\foreach \x in {2,...,2} {
    \d5\x(8+\x,6-\x)(7+\x,11-\x)
    \replacetarget
}
\d5(9,5)(8,10)
\replacetarget
\d5(8,6)(7,11)
\replacetarget
\d7(7,11)(6,18)
\d11(8,10)(7,21)

\draw[fill=white](1.5,29.5) rectangle (4.3,30.6);
\node at (2.9,30.0) {\Si{14}H_{\K}\ulZ};

\end{sseqdata}

\begin{sseqdata}[name=SI7KF,
y range={0}{13}, 
x range={0}{10}, 
y tick step = 2,
x tick step = 2,
struct lines = blue,
%    Adams grading,
%title=Page \page,
classes= {fill, inner sep = 0pt, minimum size = 0.2em},
class labels={below=2pt}, 
differentials=black,
class pattern=linear, 
class placement transform = { rotate = 135, scale = 2 },
%classes={ tooltip = {(\xcoord,\ycoord)} }, 
xscale=0.6,
yscale = 0.6,
run off differentials = ->, 
grid = go, 
grid color = gray!30%!white
]

% not g
\class[Fclass, "\phantom{*}" {inside,font=\tiny}](7,0)
\class[mgclass, "\phantom{*}" {inside,font=\tiny}](6,1)
\class[FLDRclass](5,2)
\class[FLDRclass](4,4)
\class[FLDRclass](5,5)

% g
\class[Z4class,"1" {inside,font=\tiny}](4,3)
\class[Z4class,"1" {inside,font=\tiny}](3,5)
\class[Z4class,"1" {inside,font=\tiny}](4,12)
\d2(5,2)(4,4)
\d2(4,3)(3,5)

\class[Z4class,"3" {inside,font=\tiny}](4,6)
\class[Z4class,"3" {inside,font=\tiny}](3,9)
\d3(4,6)(3,9)

\d4(6,1)(5,5)
\replacetarget
\d7(5,5)(4,12)

\draw[fill=white](7.5,9.5) rectangle (10.3,10.6);
\node at (8.9,10) {\Si{7}H_{\K}\ulF};

\end{sseqdata}

\begin{sseqdata}[name=SI10KF,
y range={0}{21}, 
x range={0}{12}, 
y tick step = 2,
x tick step = 2,
struct lines = blue,
%    Adams grading,
%title=Page \page,
classes= {fill, inner sep = 0pt, minimum size = 0.2em},
class labels={below=2pt}, 
differentials=black,
class pattern=linear, 
class placement transform = { rotate = 135, scale = 2 },
%classes={ tooltip = {(\xcoord,\ycoord)} }, 
xscale=0.6,
yscale = 0.6,
run off differentials = ->, 
grid = go, 
grid color = gray!30%!white
]

% not g
\class[Fclass, "\phantom{*}" {inside,font=\tiny}](10,0)
\class[mgclass, "\phantom{*}" {inside,font=\tiny}](9,1)
\class[FLDRclass](8,2)	

\foreach \x in {0,...,2} {
	\class[FLDRclass](6+\x,6+\x)
}
\class[FLDRclass](7,3)
\d7(9,1)(8,8)
\replacetarget
\d5(8,2)(7,7)
\replacesource
\d3(7,3)(6,6)
\replacesource

% g
\class[Z4class,"4" {inside,font=\tiny}](4,12)
\class[Z4class,"5" {inside,font=\tiny}](5,15)
\class[Z4class,"3" {inside,font=\tiny}](6,18)
\class[Z4class,"1" {inside,font=\tiny}](7,21)
\foreach \x in {0,...,2} {
	\class[Z4class, "3" {inside,font=\tiny}](5+\x,7+\x)
}
\class[Z4class,"3" {inside,font=\tiny}](5,9)
\class[Z4class,"3" {inside,font=\tiny}](6,10)
\class[Z4class,"2" {inside,font=\tiny}](6,4)
\class[Z4class,"1" {inside,font=\tiny}](8,2)
\d7(8,2)(7,9)
\replacetarget

\d3(6,4)(5,7)
\d3(5,9)(4,12)

\d5(7,3)(6,8)
\d9(7,9)(6,18)
\d5(6,10)(5,15)
\d13(8,8)(7,21)

\draw[fill=white](9.5,20.5) rectangle (12.3,21.6);
\node at (10.9,21.0) {\Si{10}H_{\K}\ulF};

\end{sseqdata}

The slice spectral sequence for \(\Si{-n} H\ulZ\) and \(\Si{n} H\ulZ\) must recover the homotopy Mackey functors of each spectrum, that is, we must be left with \(\ul\pi_{-n}(\Si{-n}H\ulZ) = \ul\pi_{n}(\Si{n}H\ulZ) = \ulZ \) and all other homotopy Mackey functors trivial. For most of the differentials, then, there is only one choice.

We use the indexing convention from \cite[Section 4.4.2]{HHR}. The Mackey functor \(\ul E_2^{t-n,t}\) is \(\ul\pi_n P^t_t(X)\). We also use the Adams convention, so that \(\ul\pi_n P^t_t(X)\) has coordinates \((n,n-t)\) and the differential,
\begin{align*}
d_r:\ul E_r^{s,t} \rightarrow \ul E_r^{s+r,t+r-1},
\end{align*}
points left one and up \(r\).

The symbols in \cref{SSS:mack} denote the Mackey functors in the slice spectral sequences shown.

\begin{table}[h] 
\caption[Symbols for {$\K$}-Mackey functors]{Symbols for $\K$-Mackey functors}
\label{SSS:mack}
\begin{center}
	{\renewcommand{\arraystretch}{1.5}
		\begin{tabular}{|c|c|c|}
			\hline 
			$\blacksquare=\ulZ$
			& $\bpent=\phi^*_{LDR} \ulF$
			& $\btrap=\ul{mg}$
			\\\hline
			$\square=\ulZ^*$
			&  $\pent=\phi_{LDR}^*\ulF^*$
			& $\trap = \ul {mg}^*$ 
			\\ \hline
			 $\ulF = \bdiamond$ & $\gcirc=\ulg^n$ & $\triangle = \ul m^*$\\\hline 
	\end{tabular} }
\end{center}
\end{table}

\begin{exmp} \label{SSS:-1HZ}
The slices for \(\Si{-1} H\ulZ\) are all a one-fold desuspension of
Eilenberg-MacLane spectra (\cref{tower:-1:K:Z}, \cref{fig:SSS:-1..-5:HZ}). Because each of these Mackey functors is in the same column, there are no differentials. Consequently, in the spectral sequence, we find a double extension:
\begin{equation} \label{double:extension}
\begin{tikzcd}[row sep=small]
    \ulZ^* \ar[dr] & & \\
    & \ulZ(2,1) \ar[dr] \ar[dl] & \\ 
    \ul m^* & & \ulZ \ar[dddll] \\
    & & \\ \\
    \ulg & & 
\end{tikzcd}
\end{equation}
\end{exmp}

\begin{exmp} \label{SSS:-5HZ}
In the spectral sequence for \(\Si{-5} H\ulZ\), \cref{fig:SSS:-1..-5:HZ}, because we can only be left with \(\ul\pi_{-5}(P^{-5}_{-5} \Si{-5} H\ulZ) \cong \ulZ\) and all differentials must go left one and up at least two, all differentials are forced. Once we have evaluated each differential, we once again find ourselves with the double extension in \cref{double:extension}.
\end{exmp}

\begin{exmp} \label{SSS:-9HZ}
In \cref{fig:SSS:-7..-9:HZ}, most of the differentials for \(\Si{-9} H\ulZ\) are again forced by the fact that only \(\ul\pi_{-9}(P^{-9}_{-9} \Si{-9} H\ulZ) \cong \ulZ\) can survive the spectral sequence. For example, we have two choices for a differential from \(\pi_{-8}P^{-32}_{-32} \Si{-9} H\ulZ \cong \ulg^2\). We find it must be
\begin{align*}
d_{15}: \ulg^2 \rightarrow \phi^*_{LDR}\ulF^* \cong  \ul\pi_{-9} (P^{-24}_{-24} \Si{-9} H\ulZ) 
\end{align*}
so that we are left with the extension in \cref{double:extension}. Indeed, we will always be left with this extension once all differentials have been evaluated. Similarly, for \(n\equiv 1\pmod 4\), we will always have a
\begin{align*}
d_2:\phi^*_{LDR} \ulF^*  \xrightarrow{\cong}{} \phi^*_{LDR} \ulF^*
\end{align*}
in the upper right corner.
\end{exmp}

In \cref{-4k-2slices:K:Z} we claim that for \(n\) odd,
\begin{align*}
    P^{-2n}_{-2n} (\Si{-n} H\ulZ) \simeq \Si{-(k+1)\rho +1} H\phi^*_{LDR} \ulF.
\end{align*}
We now prove this claim.

\begin{prop} \label{phiF:SSS}
Let \(n\) be odd and take \(\ul A\) to be one of the Mackey functors listed in \cref{tab-SShtpydegrees}. The only choice of \(\ul A\) where the homotopy of \(P^{-2n}_{-2n} (\Si{-n} H\ulZ) \simeq \Si{-(k+1)\rho+1} H\ul A\) fits into the spectral sequence for \(\Si{-n} H\ulZ\) is \(\phi^*_{LDR} \ulF\).
\end{prop}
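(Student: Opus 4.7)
The plan is to use the convergence of the slice spectral sequence for $\Si{-n}H\ulZ$ to rule out all but one candidate. Since $\ul\pi_*(\Si{-n}H\ulZ)$ is $\ulZ^*$ concentrated in degree $-n$, every Mackey functor appearing on the $E_2$-page at a total homotopy degree other than $-n$ must cancel through differentials in the spectral sequence.

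First I would compute $\pi_{-k-1}^\K(\Si{-(k+1)\rho+1}H\ul A)$ for each candidate $\ul A \in \{\phi^*_{LDR}\ul f,\; \ul m,\; \ul{mg},\; \phi^*_{LDR}\ulF\}$. As already observed in the proof of \cref{-4k-2slices:K:Z}, the Mackey functor $\ul\pi_{-k-1}(\Si{-(k+1)\rho+1}H\ul A)$ is a pullback over $\K$ whose value at $\K/\K$ has dimension $3$, $2$, $1$, or $0$ as $\ul A$ ranges through the four options, and only $\phi^*_{LDR}\ulF$ yields dimension zero. Explicit verification can be carried out by building $\Si{-(k+1)\rho+1}H\ul A$ iteratively via the cofiber sequences $(S^{-\sigma^H}\to S^0 \to \K/H_+)\wedge(-)$, tracking the long exact sequences as in the proof of \cref{homotopy:-rho:M}.

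Next I would locate this class on the $E_2$-page of the slice spectral sequence and argue that any nontrivial contribution is forbidden. Under the Adams indexing convention of the paper, $\ul\pi_{-k-1}P^{-2n}_{-2n}$ sits at bidegree $(-k-1,\,3k+1)$. The neighboring slices are pinned down by \cref{-nslice:K:Z} and \cref{-4kslices:K:Z}, and the pattern of forced differentials is displayed in \cref{fig:SSS:-1..-5:HZ} and \cref{fig:SSS:-7..-9:HZ}: the $d_r$'s entering and leaving $(-k-1,\,3k+1)$ are fully accounted for by the demand that only $\ulZ^*$ in degree $-n$ survive, leaving no available differential to eliminate an excess class at this bidegree. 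Consequently $\pi_{-k-1}^\K$ of the $(-2n)$-slice must vanish on the nose.

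The main obstacle will be the bookkeeping in the last step: one must verify, for each excluded candidate, that every possible $d_r$ touching $(-k-1,\,3k+1)$ has a source and target whose rank is already saturated by the forced differentials associated to the known slices. Since any surplus class of positive rank would then persist to $E_\infty$, contradicting the concentrated homotopy of $\Si{-n}H\ulZ$, the three candidates $\phi^*_{LDR}\ul f$, $\ul m$, and $\ul{mg}$ are ruled out, and $\ul A = \phi^*_{LDR}\ulF$ is forced.
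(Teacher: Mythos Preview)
Your proposal is correct and follows essentially the same strategy as the paper: compute $\ul\pi_{-k-1}$ of the candidate $(-2n)$-slice, place it on the $E_2$-page, and show that no differential can cancel a nonzero class there because all neighboring sources and targets are already saturated by forced differentials. The paper carries out precisely this bookkeeping, first treating $n=7$ and $n=9$ explicitly and then arguing the general pattern (the relevant $\phi^*_{LDR}\ulF^*$ is always killed by a $d_2$ and the $\ulg^3$'s from the other $(-4j-2)$-slices are hit from $\ulg^{n-k+1}$ one row below), so your ``main obstacle'' paragraph is exactly where the work lies. One minor point: your bidegree $(-k-1,\,3k+1)$ agrees with the convention stated in the text, while the paper's proof and figures use $(-k-1,\,-3k-1)$, i.e.\ $(n,\,t-n)$ rather than $(n,\,n-t)$; this is only a sign convention and does not affect the argument.
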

\begin{proof}
Because \(\Si{-(k+1)\rho+1} H\ul A\) is a \((-4k-2)\)-slice, its \(\ul\pi_{-k-1}\) is located at \((-k-1,-3k-1)\). We argue that we cannot have a nonzero Mackey functor in this location.

\begin{table}[h] 
\caption[Homotopy Comparison]{Homotopy Comparison}
\label{tab-SShtpydegrees}
\begin{center}
	{\renewcommand{\arraystretch}{1.5}\setlength{\tabcolsep}{12pt}
		\begin{tabular}{|c|c|} \hline 
		$\ul A$ & $\ul\pi _{-k-1}(\Si{-(k+1)\rho +1} H\ul A)$ \\ \hline 
		$\phi^*_{LDR} \ul f$ & $\ulg^3$ \\
		$\ul m$ & $\ulg^2$ \\
		$\ul{mg}$ & $\ulg$ \\
		$\phi^*_{LDR} \ulF$ & $\ul 0$ \rule[-2.4ex]{0pt}{2.2ex} \\ \hline
	\end{tabular} }
\end{center}
\end{table}

In the spectral sequence for \(\Si{-n} H\ulZ\), all slices below level \(-2n\) are \(\Si{-k} H\ulg^{n-k+1}\) where \(4k\in[2n+1,4n]\). These Mackey functors, for \(4k\leq 2n+1\), lie on the line \(y=-3k\). Thus, for the Mackey functors in \cref{tab-SShtpydegrees}, the source of a differential hitting it must be \((-k,-3k)\). This is not possible.

We now argue that the Mackey functor located at \((-k-1,-3k-1)\) cannot be the source of a differential.The first value of \(n\) for which we must determine the \((-2n)\)-slice is \(n=7\). The spectral sequence where \(\ul A=\phi^*_{LDR} \ulF\) is shown in \cref{fig:SSS:-7..-9:HZ}. This spectral sequence leaves us with the appropriate homotopy for \(\Si{-7} H\ulZ\).

Note there is a copy of \(\ulg^4\) located at \((-4,-12)\). For any other choice of \(\ul A\), we would have a nontrivial \(\ul\pi_{-4}\) located at \((-4,-10)\). For a differential originating from \((-4,-10)\), there are two possible targets: \(\phi^*_{LDR} \ulF^*\) at \((-5,-5)\) and \(\ulg\) at \((-5,-2)\). However, these two Mackey functors must fit into the exact sequences
\begin{align*}
    \ulg^2 \rightarrow \ulg^3\rightarrow \ulg \qquad \text{ and } \qquad 
    \ulg^4 \rightarrow \phi^*_{LDR} \ulF^* \rightarrow \ul {mg}.
\end{align*}
Thus, we cannot have a nonzero Mackey functor at \((-4,-10)\).

We now consider the spectral sequence for \(\Si{-9} H\ulZ\), located in \cref{fig:SSS:-7..-9:HZ}. We again use \(\Si{-(l+1)\rho + 1} H\phi^*_{LDR} \ulF\) for the \((-2n)\)-slice. The resulting homotopy fits in the spectral sequence. For the other three choices of \(\ul A\) we would have a nontrivial \(\ul\pi_{-5}\) located at \((-5,-13)\). For a differential originating from \((-5,-13)\) there are two possible targets: \(\ulg^3\) at \((-6,-8)\) and \(\phi^*_{LDR}\ulF^*\) at \((-6,-3)\).

However, we have a \(d_2:\phi^*_{LDR}\ulF^* \twoheadrightarrow \phi^*_{LDR}\ulF^*\) and a \(d_7:\ulg^5\twoheadrightarrow \ulg^3\). Thus, there is no target for a differential from \((-5,-13)\). Consequently, the only choice of \(\ul A\) that works is \(\phi^*_{LDR} \ulF\).

There is a similar story for all odd \(n>9\). There will always be a \(d_2\) hitting the \(\phi^*_{LDR} \ulF^*\) located at \((-k-2,-n+k+2)\). The only other possible targets for a differential from \((-k-1,-3k-1)\) are then the \(\ulg^3\)'s resulting from the homotopy of the other \((-4j-2)\)-slices. All of these will be hit by a differential from the \(\ulg^{n-k+1}\) located at \((-k-1,-3k-3)\). Thus, we cannot have a nonzero Mackey functor located at \((-k-1,-3k-1)\). The only choice of \(\ul A\) which satisfies this requirement is \(\phi^*_{LDR} \ulF\).
\end{proof}

\begin{exmp} \label{SSS:6HZ}
For the positive, trivial suspensions of \(H\ulZ\), we find that \(\Si{6} H\ulZ\) has the first nontrivial slice tower. In \cref{fig:SSS:6..8:HZ}, we then see that there is only one possible differential. This \(d_3\) exists because we must be left with only \(\ul\pi_6 (\Si{6} H\ulZ) \cong \ulZ\).
\end{exmp}

\begin{exmp} \label{SSS:7HZ}
The spectral sequence for \(\Si{7} H\ulZ\), in \cref{fig:SSS:6..8:HZ}, is more interesting. Here we find the differentials \(d_2:\phi^*_{LDR}F \rightarrow \ul{mg}\) and \(d_5:\ul{mg}\rightarrow \ulg\). Indeed, we will always see a \(d_{n-7}:\phi^*_{LDR}\ulF \rightarrow \ul{mg}\) and \(d_{2n-9}:\ul{mg} \rightarrow \ulg\) on the right side of the spectral sequence for \(\Si{n} H_\K\ulZ\).
\end{exmp}

\begin{exmp} \label{SSS:7F}
Except for the homotopy of the \(n\)-slice of \(\Si{7} H_\K\ulZ, \quad \) \(\Si{7} H_\K\ulF\), and \(\Si{8} H_\K\ulZ\), the spectral sequences for \(\Si{7} H_\K\ulZ\) and \(\Si{8} H_\K\ulZ\) collapse to give the spectral sequence for \(\Si{7} H_\K\ulF\). We see in \cref{fig:SSS:6..8:HZ} the \(\ulg\) in \((3,9)\) and the \(\ulg^2\) in \((3,9)\) in the spectral sequences for \(\Si{7} H_\K\ulZ\) and \(\Si{8} H_\K\ulZ\), respectively, combine to give the \(\ulg^3\) in \((3,9)\) in the spectral sequence for \(\Si{7} H_\K\ulF\). Off the diagonal for the \(n\)-slice for \(\Si{7} H_\K\ulZ\) and \(\Si{8} H_\K\ulZ\) we have a single copy of \(\phi^*_{LDR}\ulF\). These provide the two copies of \(\phi^*_{LDR}\ulF\) off the diagonal for \(\Si{7} H_\K\ulF\).
\end{exmp}

\begin{exmp} \label{SSS:11HZ}
Now, in \cref{fig:SSS:10..11:HZ}, we have some choice of differentials in the spectral sequence for \(\Si{11} H\ulZ\). Once we consider that only \(\ul\pi_{11}(\Si{11} H\ulZ) \cong \ulZ\) can be left, there is only one choice of each differential that provides the desired result. Analogously to the spectral sequence for \(\Si{-n} H\ulZ\) where \(n\equiv 1\pmod 4\), we will always have a \(d_2:\phi^*_{LDR} \ulF \xrightarrow{\cong}{} \phi^*_{LDR} \ulF^*\) in the bottom left corner when \(n\equiv 3\pmod 4\).
\end{exmp}

\begin{exmp} \label{SSS:10F}
Again, with the exception of the homotopy of the \(n\)-slice, the spectral sequences for \(\Si{10} H_\K\ulZ\) and \(\Si{11} H_\K\ulZ\) collapse to give the spectral sequence for \(\Si{10} H_\K\ulF\) in \cref{fig:SSS:10..11:HZ}. As in \cref{SSS:7F}, the upper left diagonals in in the spectral sequences for \(\Si{10} H_\K\ulZ\) and \(\Si{11} H_\K\ulZ\) combine to even more copies of \(\ulg\) in the upper left diagonal in the spectral sequence for \(\Si{10} H_\K\ulF\). This will always be the case.
\end{exmp}

\clearpage

\begin{fig}\label{fig:SSS:-1..-5:HZ}
\begin{flushleft}
The slice spectral sequence over \(\K\), \(n=-1,-3,-5\).
\end{flushleft}
\end{fig}
\begin{adjustbox}{minipage=[r][\textheight][b]{\paperwidth},scale={0.8}}
\printpage[name=SI-1K]
\printpage[name=SI-3K]\newline 
\printpage[name=SI-5K]
\end{adjustbox}

\clearpage

\begin{fig}\label{fig:SSS:-7..-9:HZ}
\begin{flushleft}
The slice spectral sequence over \(\K\), \(n=-7,-9\).
\end{flushleft}
\end{fig}
\begin{adjustbox}{scale=0.8}
\printpage[name=SI-7K]
\printpage[name=SI-9K]
\end{adjustbox}

\clearpage

\begin{fig}\label{fig:SSS:6..8:HZ}
\begin{flushleft}
The slice spectral sequence over \(\K\), \(n=6,7,8\).
\end{flushleft}
\end{fig}
\begin{adjustbox}{minipage=[r][\textheight][b]{\paperwidth},scale={0.8}}
\printpage[name=SI6K]
\printpage[name=SI7K] \newline 
\printpage[name=SI8K]
\printpage[name=SI7KF]
\end{adjustbox}

\vfill

\clearpage

\begin{fig}\label{fig:SSS:10..11:HZ}
\begin{flushleft}
The slice spectral sequence over \(\K\), \(n=10,11\).
\end{flushleft}
\end{fig}
\begin{adjustbox}{scale={0.7}}
\printpage[name=SI10K]
\end{adjustbox} \newline 
\begin{adjustbox}{scale=0.7}
\printpage[name=SI10KF] 
\printpage[name=SI11K]
\end{adjustbox}

\clearpage

\begin{fig}\label{fig:SSS:12:HZ}
\begin{flushleft}
The slice spectral sequence over \(\K\), \(n=12\).
\end{flushleft}
\end{fig}
\begin{adjustbox}{scale=0.9}
\printpage[name=SI12K]
\end{adjustbox}

\clearpage

\begin{fig}\label{fig:SSS:14:HZ}
\begin{flushleft}
The slice spectral sequence over \(\K\), \(n=14\).
\end{flushleft}
\end{fig}
\begin{adjustbox}{scale=0.9}
\printpage[name=SI14K]
\end{adjustbox}

%%%%%%%%%%%%%%%%%%%%%%%%%%%%%%%%%%%%% Bibliography %%%%%%%%%%%%%%%%%%%%%%%%%%%%%%%%%%%%%

\begin{bibdiv}
\begin{biblist}

\bib{D}{article}{
author={Dugger, Daniel},
title={An Atiyah-Hirzebruch spectral sequence for $KR$-theory},
journal={$K$-Theory},
volume={35},
date={2005},
number={3-4},
pages={213--256 (2006)},
issn={0920-3036},
review={\MR{2240234}},
doi={10.1007/s10977-005-1552-9},
}

\bib{GM}{article}{
author={Greenlees, J. P. C.},
author={Meier, Lennart},
title={Gorenstein duality for real spectra},
journal={Algebr. Geom. Topol.},
volume={17},
date={2017},
number={6},
pages={3547--3619},
issn={1472-2747},
review={\MR{3709655}},
doi={10.2140/agt.2017.17.3547},
}

\bib{GY}{article}{
author={Guillou, B.},
author={Yarnall, C.},
title={The Klein four slices of $\Sigma^n H\ulF_2$},
journal={Math. Z.},
volume={295},
date={2020},
number={3-4},
pages={1405--1441},
issn={0025-5874},
review={\MR{4125695}},
doi={10.1007/s00209-019-02433-3},
}

\bib{H}{article}{
author={Hill, Michael A.},
title={The equivariant slice filtration: a primer},
journal={Homology Homotopy Appl.},
volume={14},
date={2012},
number={2},
pages={143--166},
issn={1532-0073},
review={\MR{3007090}},
doi={10.4310/HHA.2012.v14.n2.a9},
}

\bib{HHR}{article}{
author={Hill, M. A.},
author={Hopkins, M. J.},
author={Ravenel, D. C.},
title={On the nonexistence of elements of Kervaire invariant one},
journal={Ann. of Math. (2)},
volume={184},
date={2016},
number={1},
pages={1--262},
issn={0003-486X},
review={\MR{3505179}},
doi={10.4007/annals.2016.184.1.1},
}

\bib{HHR2}{article}{
author={Hill, Michael A.},
author={Hopkins, M. J.},
author={Ravenel, D. C.},
title={The slice spectral sequence for certain $RO(C_{p^n})$-graded
suspensions of $H\underline{\bf Z}$},
journal={Bol. Soc. Mat. Mex. (3)},
volume={23},
date={2017},
number={1},
pages={289--317},
issn={1405-213X},
review={\MR{3633137}},
doi={10.1007/s40590-016-0129-3},
}

\bib{HS}{article}{
author={Heard, Drew},
author={Stojanoska, Vesna},
title={$K$-theory, reality, and duality},
journal={J. K-Theory},
volume={14},
date={2014},
number={3},
pages={526--555},
issn={1865-2433},
review={\MR{3349325}},
doi={10.1017/is014007001jkt275},
}

\bib{HY}{article}{
author={Hill, Michael A.},
author={Yarnall, Carolyn},
title={A new formulation of the equivariant slice filtration with
applications to $C_p$-slices},
journal={Proc. Amer. Math. Soc.},
volume={146},
date={2018},
number={8},
pages={3605--3614},
issn={0002-9939},
review={\MR{3803684}},
doi={10.1090/proc/13906},
}

\bib{U}{article}{
author={Ullman, John},
title={On the slice spectral sequence},
journal={Algebr. Geom. Topol.},
volume={13},
date={2013},
number={3},
pages={1743--1755},
issn={1472-2747},
review={\MR{3071141}},
doi={10.2140/agt.2013.13.1743},
}

\bib{U2}{book}{
author={Ullman, John Richard},
title={On the Regular Slice Spectral Sequence},
note={Thesis (Ph.D.)--Massachusetts Institute of Technology},
publisher={ProQuest LLC, Ann Arbor, MI},
date={2013},
pages={(no paging)},
review={\MR{3211466}},
}

\bib{V}{article}{
author={Voevodsky, Vladimir},
title={Open problems in the motivic stable homotopy theory. I},
conference={
  title={Motives, polylogarithms and Hodge theory, Part I},
  address={Irvine, CA},
  date={1998},
},
book={
  series={Int. Press Lect. Ser.},
  volume={3},
  publisher={Int. Press, Somerville, MA},
},
date={2002},
pages={3--34},
review={\MR{1977582}},
}

\bib{Y}{article}{
author={Yarnall, Carolyn},
title={The slices of $S^n\wedge H\underline{\mathbb{Z}}$ for cyclic
$p$-groups},
journal={Homology Homotopy Appl.},
volume={19},
date={2017},
number={1},
pages={1--22},
issn={1532-0073},
review={\MR{3628673}},
doi={10.4310/HHA.2017.v19.n1.a1},
}

\bib{Z}{article}{
     AUTHOR = {Zeng, Mingcong},
     TITLE = {Eilenberg-Mac Lane spectra in cyclic \(p\)-groups},
     eprint = {https://arxiv.org/abs/1710.01769},
     year = {2018},
}
     
\bib{Z2}{thesis}{
  author={Zou, Yan},
  title={\(RO(D_{2p})\)-graded Slice Spectral Sequence of \(H\ulZ\)},
  note={Thesis (Ph.D.)--University of Rochester},
  date={2018},
}

\end{biblist}
\end{bibdiv}

\end{document}